\theoremstyle{plain}
\newtheorem{theorem}{\bf Theorem}[section]
\newtheorem{proposition}[theorem]{\bf Proposition}
\newtheorem{lemma}[theorem]{\bf Lemma}
\theoremstyle{definition}
\newtheorem{example}[theorem]{\bf Example}
\newtheorem{remark}[theorem]{\bf Remark}
\newtheorem{problem}[theorem]{\bf Problem}
\DeclareMathOperator{\supp}{supp}
\DeclareMathOperator{\Aut}{Aut}
\DeclareMathOperator{\Orb}{Orb}
\newcommand{\bdot}{\boldsymbol{\cdot}}
\title{The Noether numbers and the Davenport constants of the groups of order less than 32}
\author[1]{K\'alm\'an Cziszter 
\thanks{Email: \texttt{cziszter.kalman@gmail.com}\\
Partially supported by  National Research, Development and Innovation Office, NKFIH   grants PD113138, ERC~HU~15 118286 and K115799.}
}
\author[1]{M\'aty\'as Domokos 
\thanks{Email:  \texttt{domokos.matyas@renyi.mta.hu}\\
Supported by National Research, Development and Innovation Office,  NKFIH K 119934.}}
\author[2,1]{Istv\'an Sz\" oll\H osi 
\thanks{Email: \texttt{szollosi@gmail.com}\\
Supported by ERC-AdG 321104 and GTC-31816 (Babe\c s-Bolyai University grant).}}
\affil[1]{MTA R\'enyi Institute,  1053 Budapest, Re\'altanoda utca 13-15, Hungary}
\affil[2]{Faculty of Mathematics and Computer Science, Babe\c s-Bolyai University, str. M. Kog\u alniceanu, nr. 1, 400084, Cluj-Napoca, Romania}
\date{}
\begin{document}
\maketitle

\begin{abstract}
The computation of the Noether numbers of all groups of order less than thirty-two is completed. 
It turns out that for these groups in non-modular characteristic the Noether number is attained on a multiplicity free representation, 
it is strictly monotone on subgroups and factor groups, 
and it does not depend on the characteristic.  Algorithms are developed  and used  to determine 
the small and large Davenport constants of these groups.  For each of these groups the Noether number is greater than the small Davenport constant,  whereas the first example of a group whose Noether number exceeds the large Davenport constant is found, answering partially a question posed by Geroldinger and Grynkiewicz.

\vskip .3 cm
\noindent 2010 MSC: 13A50 (Primary) 20D60 (Secondary)

\noindent Keywords:  polynomial invariant, product-one sequence, 
degree bound, Noether number, Davenport constant 
\end{abstract}

\section{Introduction} \label{sec:intro}

\subsection{The Noether number} 
Fix a base field $\mathbb{F}$ and a finite group $G$. Given a 
{\it $G$-module} $V$ 
(i.e. a finite dimensional $\mathbb{F}$-vector space $V$ together with an action of $G$ via linear transformations) there is an induced action of $G$ on the {\it symmetric tensor algebra}  
$S(V)$  by $\mathbb{F}$-algebra automorphisms. More concretely, $S(V)$ can be identified with the polynomial algebra $\mathbb{F}[x_1,\dots,x_n]$ where $x_1,\dots,x_n$ is a basis of $V$, on which $G$ acts via linear substitutions of the variables. Noether \cite{noether:1926} proved  that the algebra 
\[S(V)^G=\{f\in S(V) : g\cdot f=f \text{ for all } g\in G\}\] 
of {\it polynomial invariants} is generated by finitely many homogeneous elements. Denote by $\beta(S(V)^G)$ the minimal non-negative integer $d$ such that $S(V)^G$ is generated by its homogeneous components of degree at most $d$. 
Here we refer to the standard grading on $S(V)$, so the variables $x_i$ all have degree one. 
The {\it Noether number} of  $G$ is 
\[\beta^{\mathbb{F}}(G)=\sup \{\beta(S(V)^G) \colon V\mbox{ is a }G\mbox{-module}\}.\]   
The following general facts are well known: 
\begin{equation}\label{eq:noether}
\beta^{\mathbb{F}}(G)\begin{cases}=\infty &\mbox{ when }\mathrm{char}(\mathbb{F})\mid |G| \\
\le |G| &\mbox{ when }\mathrm{char}(\mathbb{F})\nmid |G|;\end{cases} \end{equation} 
(see \cite{richman} for the case $\mathrm{char}(\mathbb{F})\mid |G|$, 
\cite{noether} for the case $\mathrm{char}(\mathbb{F})=0$ and 
\cite{fleischmann}, \cite{fogarty} for the case $0<\mathrm{char}(\mathbb{F})\nmid |G|$). 

From now on we assume that $\mathbb{F}$ is a fixed base field with $\mathrm{char}(\mathbb{F})\nmid |G|$, and write $\beta(G):=\beta^{\mathbb{F}}(G)$ by suppressing from the notation the dependence 
of the Noether number on $\mathbb{F}$. It is well known that the Noether number is unchanged 
when we extend the base field (see Subsection~\ref{subsec:char} for more information), so we may assume in proofs that $\mathbb{F}$ is algebraically closed. 

The exact value of the Noether number is known only for a very limited class of groups. First of all, we have 
\begin{equation}\label{eq:beta=D}
\beta(G)=\mathsf{D}(G)\mbox{ for abelian }G
\end{equation} 
where $\mathsf{D}(G)$ is the Davenport constant (the maximal length of an irreducible zero-sum sequence over $G$); this observation was used first in \cite{schmid}. The exact value of $\mathsf{D}(G)$ is known among others for abelian 
$p$-groups and for abelian groups of rank at most two. Considering non-abelian groups, the Noether number of the dihedral groups was determined in \cite{schmid} (and in   \cite{sezer} for non-modular positive characteristic) along with the Noether numbers of the quaternion group of order $8$ and the alternating group $A_4$. 
Recent works of the first two authors of the present paper (see \cite{CzD:1}, \cite{CzD:2}, 
\cite{Cz1}, \cite{Cz2}, \cite{CzDG}) added a few  more (series) of groups to this short list. 
These results indicated that a complete table of the Noether numbers of ``small'' groups might be within reach. It turned out that indeed, the reduction lemmas from  \cite{CzD:1} and considerations similar to the methods used in the above mentioned papers are sufficient 
to determine the Noether numbers for all groups of order less than $32$. 
Note that the number of non-abelian groups of order $32$ is $44$.  This explains our choice of limiting the scope of this paper to the groups of order less than $32$.

\subsection{The Davenport constants} 
Equality \eqref{eq:beta=D} inspired Geroldinger and 
Grynkiewicz \cite{GeGryn} to look for an analogue in the case of non-abelian groups. 
By a {\it sequence} over the finite group $G$ we mean a finite sequence of elements from $G$ which is unordered and
repetition of terms is allowed. 
A sequence is  {\it product-one} if the product of its elements in an appropriate order is 
$1_G$. A sequence is {\it product-one free} if it has no product-one subsequences. 
The {\it small Davenport constant} $\mathsf{d}(G)$ was defined in \cite{olson-white} as the maximal length of a product-one free sequence. 
A sequence is considered as an element of the free abelian monoid $\mathcal F(G)$ and product-one sequences form a submonoid $\mathcal{B}(G)$  of $\mathcal{F}(G)$. The {\it large Davenport constant} $\mathsf{D}(G)$ was defined in \cite{GeGryn} as the maximal length of an atom (irreducible element) in $\mathcal{B}(G)$. We have the inequality 
\[\mathsf{d}(G)+1\le \mathsf{D}(G)\] 
with equaliy for abelian $G$. The question whether $\beta(G)$ is always between 
$\mathsf{d}(G)+1$ and $\mathsf{D}(G)$ was raised in \cite{GeGryn} (the possible relation between the Noether number and Davenport constants is discussed further in  
\cite{CzDG}). 
Using the implementation of our algorithms presented in Section~\ref{sec:algorithms} we completed the determination of  $\mathsf{d}(G)$ and $\mathsf{D}(G)$ for groups of order less than $32$. It turned out that 
$\mathsf{d}(G)+1\le\beta(G)\le \mathsf{D}(G)$ holds for these groups with the only exception being  the Heisenberg group $H_{27}$ of order $27$, for which we have $\beta(H_{27})>\mathsf{D}(H_{27})$. 

\subsection{Outline of the paper} 

In Section~\ref{sec:table} we give a table containing  the values of the  Noether number and the Davenport constants for each non-abelian group of order less than $32$.  
In Section~\ref{sec:noether} we provide references and proofs to verify 
the Noether numbers in the table. 
We draw consequences from the obtained data and state some open questions in 
Section~\ref{sec:observations}.  
In particular, in  non-modular characteristic for each group of order less than $32$ the Noether number is  attained on a multiplicity free representation (see  Theorem~\ref{thm:multfree}), the Noether number is strictly monotone with respect to taking subgroups or factor groups (see Theorem~\ref{thm:monotone}, which is generalized to arbitrary finite groups in the subsequent paper \cite{CzD:4}) and the Noether number does not depend on the characteristic (see Theorem~\ref{thm:char-indep}).  
In Section~\ref{sec:davenport} we turn to the Davenport constants. 
Notation and known results are recalled in Sections~\ref{sec:monoid} and ~\ref{subsec:known-davenport}. 
In Section~\ref{sec:heisenberg} we present a theoretical proof for the fact that $\mathsf{D}(H_{27})=8$; this seems to be of special interest because so far this is the only known example of a group for which the Noether number is greater than the large Davenport constant.   Section~\ref{sec:algorithms} contains the description of the algorithms we employed to compute  the  Davenport constants given in the table in Section~\ref{sec:table}. 

\section{Noether numbers and Davenport constants} \label{sec:table}

The classification of all groups of order less than $32$ is given e.g. in \cite[Chapter~22]{humphreys}. 
In the table below we present the Noether numbers and Davenport constants of all the non-abelian groups of order less than $32$.  In the first column we also give for reference the GAP (see \cite{GAP4}) identification numbers $(m,n)$ using which these groups can be constructed in GAP by   the function call \texttt{SmallGroup(m,n)}.

\[ 
\begin{array}{c|c|c|c|c|c}
GAP &G  & \mathsf d & \beta & \mathsf D &\text{reference for $\beta$}\\ \hline
(6,1) & S_3=Dih_6 & 3&4& 6 & \text{\cite[Theorem~10.3]{CzD:2}} \\
(8,3) & Dih_8 & 4&5& 6 & \text{\cite[Theorem~10.3]{CzD:2}} \\
(8,4)& Q_8=Dic_8  & 4 & 6 & 6  & \text{\cite[Theorem~10.3]{CzD:2}} \\
(10,1) & Dih_{10} & 5&6& 10 & \text{\cite[Theorem~10.3]{CzD:2}} \\
(12,1) & Dic_{12} = C_3 \rtimes C_4 & 6 & 8 & 9 & \text{\cite[Theorem~10.3]{CzD:2}}\\
(12,3)&A_4 & 4 & 6 & 7 & \text{\cite[Theorem 3.4]{CzD:1}}\\
(12,4) & Dih_{12} & 6&7& 9 & \text{\cite[Theorem~10.3]{CzD:2}} \\
(14,1) & Dih_{14} & 7&8& 14 & \text{\cite[Theorem~10.3]{CzD:2}} \\
(16,3) & (C_2\times C_2) \rtimes C_4 =  (C_4 \times C_2) \rtimes_{\psi} C_2 & 5 & 6 & 7 & \text{Proposition~\ref{prop:k4_rtimes_c2}}\\
(16,4) & C_4 \rtimes C_4 & 6 &7 & 8 & \text{Proposition~\ref{prop:C4C4}}\\
(16,6) & M_{16} & 8&9 &10 & \text{\cite[Theorem~10.3]{CzD:2}}\\
(16,7) & Dih_{16} & 8&9 &12 & \text{\cite[Theorem~10.3]{CzD:2}}\\
(16,8) & SD_{16}  & 8& 9 & 12 & \text{\cite[Theorem~10.3]{CzD:2}}\\
(16,9) & Dic_{16}  & 8& 10 & 12 & \text{\cite[Theorem~10.3]{CzD:2}}\\
(16,11)& Dih_8 \times C_2 \; = \; (C_4 \times C_2) \rtimes_{-1} C_2 & 5 & 6 & 7 
& \text{\cite[Corollary 5.5]{CzD:2}} \\
(16,12) & Q_8 \times C_2 & 5 & 7 & 7 & \text{Proposition~\ref{prop:Q8C2}}\\
(16,13)& (Pauli) \; = \; (C_4 \times C_2) \rtimes_{\phi} C_2  & 5 & 7  & 7 & \text{\cite[Example~5.4]{CzDG}} \\
(18,1) & Dih_{18} & 9&10& 18 & \text{\cite[Theorem~10.3]{CzD:2}} \\
(18,3) &S_3 \times C_3 & 7 & 8 & 10 &\text{Proposition~\ref{prop:S3C3}}\\
(18,4) & (C_3\times C_3) \rtimes_{-1} C_2 & 5 & 6 & 10 &  \text{\cite[Corollary 5.5]{CzD:2}}\\
(20,1) & Dic_{20}  & 10 & 12 & 15 & \text{\cite[Theorem~10.3]{CzD:2}}\\
(20,3)& C_5 \rtimes C_4 & 7 & 8 & 10 &\text{\cite[Proposition 3.2]{CzD:1}}\\
(20,4) & Dih_{20} & 10&11& 15 & \text{\cite[Theorem~10.3]{CzD:2}} \\
(21,1)& C_7 \rtimes C_3 & 8 & 9 & 14 & \text{\cite[Proposition 2.24]{CzD:1}}\\
(22,1) & Dih_{22} & 11&12& 22 & \text{\cite[Theorem~10.3]{CzD:2}} \\
(24,1) & C_3 \rtimes C_8 
 & 12 & 13 & 15 & \text{\cite[Theorem~10.3]{CzD:2}}\\
(24,3)& SL_2(\mathbb{F}_3) = \tilde{A}_4& 7 & 12 & 13 & \text{\cite[Corollary 3.6]{CzD:1}} \\
(24,4) & Dic_{24} = C_3 \rtimes Q_8  & 12&14 & 18 & \text{\cite[Theorem~10.3]{CzD:2}}\\
(24,5) & Dih_6\times C_4  & 12& 13 & 15 & \text{\cite[Theorem~10.3]{CzD:2}}\\
(24,6) & Dih_{24} & 12&13& 18 & \text{\cite[Theorem~10.3]{CzD:2}} \\
(24,7) & Dic_{12} \times C_2 & 8 &9 & 11 & \text{Proposition~\ref{prop:Dic12C2}}\\ 
(24,8)& C_3 \rtimes Dih_8 = (C_6 \times C_2) \rtimes_{\gamma} C_2 & 7 & 9 & 14 & \text{Proposition~\ref{prop:C3Dih8}}\\
(24,10)& Dih_{8} \times C_3 & 12&13 & 14 & \text{\cite[Theorem~10.3]{CzD:2}}\\
(24,11)& Q_8 \times C_3 & 12&13 & 14 & \text{\cite[Theorem~10.3]{CzD:2}}\\
(24,12)& S_4 & 6 & 9  & 12 & \text{\cite[Example~5.3]{CzDG}}\\
(24,13) & A_4 \times C_2 & 7 & 8 & 10&\text{Proposition~\ref{prop:a4xc2}}\\
(24,14) & Dih_{12} \times C_2 = (C_ 6 \times C_2)\rtimes_{-1} C_2& 7 &8  & 10 
&  \text{\cite[Corollary 5.5]{CzD:2}}\\
(26,1) & Dih_{26}  & 13& 14 & 26 & \text{\cite[Theorem~10.3]{CzD:2}}\\
(27,3)& H_{27}=UT_3(\mathbb{F}_3) & 6 & 9
& {8} & \text{\cite[Corollary~15]{Cz2}}\\
(27,4)& M_{27} = C_9 \rtimes C_3 & 10 & 11 & 12 & \text{\cite[Remark~10.4]{CzD:2}}\\
(28,1) & Dic_{28}=C_7\rtimes C_4  & 14 & 16 & 21 & \text{\cite[Theorem~10.3]{CzD:2}}\\ 
(28,3) & Dih_{28} & 14& 15 & 21 & \text{\cite[Theorem~10.3]{CzD:2}}\\
(30,1) & Dih_6\times C_5  & 15&  16 & 18 & \text{\cite[Theorem~10.3]{CzD:2}}\\
(30,2) & Dih_{10}\times C_3 & 15& 16 & 20&  \text{\cite[Theorem~10.3]{CzD:2}}\\ 
(30,3) & Dih_{30}  & 15& 16 & 30 & \text{\cite[Theorem~10.3]{CzD:2}}\\
\end{array}
\] 

In this table $S_3$ and  $S_4$ are the symmetric groups of degree $3$ and $4$, $Q_8$ is the quaternion group of order $8$, $A_4$ is the alternating group of degree $4$  and $\tilde A_4$ is the binary tetrahedral group, 
$H_{27}$ is the Heisenberg group of order $27$ 
(i.e. the group of unitriangular $3\times 3$ matrices over the $3$-element field). 
For $m\ge 2$, 
$Dih_{2m}$ is the dihedral group of order $2m$, 
$Dic_{4m}$ is the {\it dicyclic group} given by generators and relations 
\[Dic_{4m} := \langle a,b\mid a^{2m}=1, \quad  b^2=a^{m}, \quad bab^{-1}= a^{-1}\rangle.\]
Using  for the semidirect product of two cyclic groups the notation 
\[ C_m \rtimes_d C_n = \langle a,b\mid    a^m=1, b^n=1, bab^{-1}=a^d \rangle  \quad \text{ where } d \in \mathbb{N}\mbox{ is coprime to }m \]
we have that 
$SD_{2^k}$ is the {\it semidihedral group} 
\[SD_{2^k} = C_{2^{k-1}} \rtimes_d C_2, \qquad d=2^{k-2}-1 \quad (k\geq 4),\]
and for  a prime $p$ and $k\ge 3$, 
\[M_{p^k} = C_{p^{k-1}} \rtimes_d  C_p, \qquad d={p^{k-2}+1}.\]
Moreover, the symbol $\rtimes$ always stands for a semidirect product that is not a direct product.

\section{Noether numbers}\label{sec:noether}

\subsection{Abelian groups}\label{subsec:abelian}
It has been long known that for an  abelian group $G$ we have $\mathsf d(G)+1=\beta(G)= \mathsf D(G)$, see \cite{CzDG} for a recent survey largely motivated by this fact. 
Therefore we can restate known results on the Davenport constants 
of abelian groups in terms of the Noether number: 

\begin{itemize}
\item if $G$ is cyclic then $\beta(G) = |G|$ (see for example \cite{schmid});
\item if $G$ is of rank two, i.e. $G = C_n \times C_m$ for some $m \mid n$ then $\beta(G) = n+m-1$ 
(see for example  \cite[Theorem 5.8.3]{Ge-HK06a}); 
\item if $G$ is a $p$-group, i.e. $G = C_{p^{n_1}}\times \ldots \times C_{p^{n_r}}$ then $\beta(G) = 1+ \sum_{i=1}^r (p^{n_i}-1) $ 
(see for example  \cite[Theorem 5.5.9]{Ge-HK06a});   
\item if $G= C_2 \times C_2 \times C_{2n}$ then $\mathsf D(G) = 2n+2$ by \cite{emde}.
\end{itemize}
All abelian groups of order less than $32$ fall under one of the four cases above.  
We note that more recent  progress on the Davenport constants of abelian groups can be found in  \cite{bhowmik-puchta}, 
\cite[Corollary 4.2.13]{geroldinger}, 
\cite{wschmid}, \cite{chen-savchev}. 

\subsection{Groups with a cyclic subgroup of index two}\label{subsec:indextwo}
Let $G$ be a non-cyclic group having a cyclic subgroup of index two. 
According to  \cite[Theorem~10.3]{CzD:2} we have  
\begin{align} \label{indextwo}
\beta(G) = \frac{1}{2} |G|  +
\begin{cases}
2 	& \text{ if } G=Dic_{4m}, \text{  $m>1$};\\
1	& \text{ otherwise. }
\end{cases}
\end{align}
Formula \eqref{indextwo} yields the Noether number for $27$ groups out of the $45$ groups  from the table in Section~\ref{sec:table}. 

\subsection {The generalized dihedral groups}\label{subsec:generalized dihedral} 

The semi-direct product $Dih(A) := A \rtimes_{-1} C_2$, where $A$ is an abelian group on which $C_2$ acts by  inversion, is called the \emph{generalized dihedral group} obtained from $A$. According to \cite[Corollary~5.5]{CzD:2} we have 
\begin{align}\label{gendih}\beta(Dih(A)) = \mathsf D(A) +1.\end{align}
Combining this with the known values of Davenport constants given in Section~\ref{subsec:abelian} we can compute the Noether number of the three generalized dihedral groups of order less than $32$ which are not themselves dihedral groups (these are 
$(C_4 \times C_2)\rtimes_{-1} C_2 =Dih_8 \times C_2$, 
$(C_3\times C_3) \rtimes_{-1} C_2$, 
$(C_ 6 \times C_2)\rtimes_{-1} C_2 = Dih_{12} \times C_2$). 

\subsection{Cases when the reduction lemmas give  exact results}

The $k$th \emph{Noether number} $\beta_k(S(V)^G)$ (where $k$ is a positive integer) was defined  in \cite[Section 1.2]{CzD:1} 
as the top degree of the factor space $S(V)^G/(S(V)^G_+)^{k+1}$, where 
$S(V)^G_+$ stands for the sum of the positive degree homogeneous components of $S(V)^G$. The supremum of $\beta_k(S(V))^G$ as $V$ ranges over all $G$-modules over $\mathbb{F}$ is denoted by $\beta_k(G)$ (for an abelian group $G$, $\beta_k(G)$ equals the $k$th Davenport constant $\mathsf{D}_k(G)$ introduced in \cite{halter-koch}). 
In the special case $k=1$ we have $\beta_1(S(V)^G)=\beta(S(V)^G)$ and hence 
$\beta_1(G)=\beta(G)$. 
Using the $k$th Noether number one can get upper bounds on $\beta(G)$ by the following reduction lemma:  
\begin{align}
\label{red}
\beta(G) &\le \beta_{\beta(G/N)}(N) && \text{ for }N \triangleleft G \mbox{ by  \cite[Lemma 1.4]{CzD:1}}. 
\end{align} 
Lower bounds on $\beta(G)$ can be derived from the following inequality: 
\begin{align} 
\label{also1}
\beta(G) &\ge \beta(G/N) + \beta(N) -1 && \text{ if $G/N$ is abelian by \cite[Theorem 4.3]{CzD:2}}. 
\end{align}
These results already suffice to establish the precise value of the Noether number for several groups considered below when we combine them with the following formula for the $k$th Noether number:
\begin{align}
\label{halterkoch}
\beta_k(C_n \times C_m) = nk+m-1 & \text{ for $m \mid n$ by \cite[Proposition 5]{halter-koch}}. 
\end{align}

\begin{proposition}\label{prop:C4C4}
$\beta(C_4 \rtimes C_4) =7$.
\end{proposition}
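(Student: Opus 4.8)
The strategy is to establish the upper and lower bounds separately, using the reduction lemmas from the excerpt. Write $G = C_4 \rtimes C_4 = \langle a, b \mid a^4 = b^4 = 1,\ bab^{-1} = a^{-1}\rangle$. For the upper bound, I would take $N = \langle a \rangle \cong C_4$, which is normal with $G/N \cong C_4$. Then $\beta(G/N) = |C_4| = 4$ by the cyclic case, and the reduction lemma \eqref{red} gives $\beta(G) \le \beta_{\beta(G/N)}(N) = \beta_4(C_4)$. Now $\beta_4(C_4) = \beta_4(C_4 \times C_1)$, and by the Halter-Koch formula \eqref{halterkoch} with $n = 4$, $m = 1$, $k = 4$ we get $\beta_4(C_4) = 4\cdot 4 + 1 - 1 = 16$, which is far too weak. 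So instead I would choose a better normal subgroup: take $N \cong C_2$ to be the central subgroup $\langle a^2 \rangle$ or $\langle b^2 \rangle$ — but one must check which quotient is most favorable. A more promising choice is $N = \langle a^2, b^2 \rangle \cong C_2 \times C_2$ (one should verify this is normal and that $G/N \cong C_2 \times C_2$), giving via \eqref{red} that $\beta(G) \le \beta_{\beta(C_2\times C_2)}(C_2 \times C_2) = \beta_3(C_2 \times C_2)$; by \eqref{halterkoch} this is $2\cdot 3 + 2 - 1 = 7$. Alternatively, with $N = \langle a \rangle \cong C_4$ one could try to use a sharper bound on $\beta_k(C_4)$ that exploits the specific action, but the cleanest route is the $C_2\times C_2$ quotient.

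For the lower bound $\beta(G) \ge 7$, I would use inequality \eqref{also1}. With $N = \langle a^2 \rangle \cong C_2$ central, the quotient $G/N$ has order $8$; one checks it is isomorphic to $C_4 \times C_2$ (since $b^2$ becomes central and $a$ has order $2$, with $\bar b$ of order $4$), which is abelian. Then \eqref{also1} gives $\beta(G) \ge \beta(C_4 \times C_2) + \beta(C_2) - 1 = 5 + 2 - 1 = 6$, which is one short. To push to $7$, I would instead look for a normal subgroup $N$ with $G/N$ abelian and $\beta(N)$ large: taking $N = \langle a \rangle \cong C_4$ with $G/N \cong C_4$ abelian yields $\beta(G) \ge 4 + 4 - 1 = 7$. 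This requires only that $\langle a \rangle$ be normal (immediate from the relation $bab^{-1} = a^{-1}$) and that $G/N$ be abelian (it is cyclic of order $4$). Combining the two bounds gives $\beta(G) = 7$.

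The main obstacle I anticipate is verifying the structure of the relevant quotient groups — specifically confirming that $G/\langle a^2, b^2\rangle \cong C_2 \times C_2$ and that $\langle a^2, b^2 \rangle$ is genuinely normal of order $4$ (the subgroup generated by $a^2$ and $b^2$ sits inside the center, since $b a^2 b^{-1} = a^{-2} = a^2$ and $b^2$ is central as $b^2 a b^{-2} = a$, so it is central hence normal, and it has order $4$ provided $a^2 \ne b^2$, which holds). Once the group-theoretic bookkeeping is settled, both bounds follow immediately from plugging into \eqref{red}, \eqref{halterkoch}, \eqref{also1} and the cyclic/rank-two values from Subsection~\ref{subsec:abelian}; no representation-theoretic computation is needed. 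The only subtlety is making sure the chosen $N$ simultaneously makes the upper bound tight (needs $G/N$ small, here order $4$) — in fact the symmetric choice works: with $N = \langle a \rangle$ one gets the lower bound $7$, and the upper bound $7$ comes from the $C_2\times C_2$ quotient, so one should present both choices explicitly.
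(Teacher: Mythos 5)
Your proposal is correct and follows essentially the same route as the paper: the lower bound $\beta(G)\ge \beta(C_4)+\beta(C_4)-1=7$ via \eqref{also1} with $N=\langle a\rangle$, and the upper bound $\beta(G)\le \beta_{\beta(K_4)}(K_4)=\beta_3(C_2\times C_2)=7$ via \eqref{red} and \eqref{halterkoch} using a normal Klein four subgroup with Klein four quotient. Your only addition is making that subgroup explicit as the central subgroup $\langle a^2,b^2\rangle$, which the paper leaves implicit; the preliminary detours (e.g.\ the discarded bound $\beta_4(C_4)$) do not affect the final, correct argument.
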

\begin{proof}
We have the lower bound $\beta(C_4 \rtimes C_4) \ge 2 \beta(C_4)-1 =7$ by \eqref{also1}. On the other hand $G = C_4 \rtimes C_4$ contains a normal subgroup isomorphic to the Klein four-group $K_4=C_2 \times C_2$ such that $G/K_4 \cong K_4$. Hence by \eqref{red} and \eqref{halterkoch} we have 
$\beta(G) \le \beta_{\beta(K_4)}(K_4)=7$. 
\end{proof}

\begin{proposition}\label{prop:Q8C2}
$\beta(Q_8 \times C_2) =7$.
\end{proposition}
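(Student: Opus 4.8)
The plan is to show $\beta(Q_8\times C_2)\le 7$ and $\beta(Q_8\times C_2)\ge 7$, mimicking the proof of Proposition~\ref{prop:C4C4}. Write $G=Q_8\times C_2$, a group of order $16$.

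For the lower bound I would apply the inequality \eqref{also1} to the normal subgroup $N=Q_8$ of $G$. Since $G/N\cong C_2$ is abelian, \eqref{also1} gives $\beta(G)\ge\beta(C_2)+\beta(Q_8)-1$. Using $\beta(C_2)=2$ (a cyclic group) and the value $\beta(Q_8)=6$ already recorded in the table (row $(8,4)$, see \cite[Theorem~10.3]{CzD:2}), this produces $\beta(G)\ge 2+6-1=7$.

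For the upper bound I would choose $N$ to be the center $Z(G)=Z(Q_8)\times C_2\cong C_2\times C_2$, which is normal in $G$. Then $G/N\cong (Q_8/Z(Q_8))\times(C_2/C_2)\cong C_2\times C_2$, so $\beta(G/N)=\beta(C_2\times C_2)=3$. Now the reduction lemma \eqref{red} yields $\beta(G)\le\beta_{3}(C_2\times C_2)$, and the Halter--Koch formula \eqref{halterkoch} with $n=m=2$, $k=3$ evaluates this to $2\cdot 3+2-1=7$. Combining the two estimates gives $\beta(Q_8\times C_2)=7$.

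I do not anticipate a genuine obstacle; the only delicate point is selecting, for the upper bound, a normal subgroup $N$ that is abelian (so that \eqref{halterkoch} applies) and whose quotient has Noether number small enough to keep $\beta_{\beta(G/N)}(N)$ equal to $7$. The central Klein four-subgroup works precisely because the quotient is again a Klein four-group with $\beta=3$ and $\beta_3=7$; any coarser choice (e.g.\ the $C_2$ direct factor, with quotient $Q_8$) would give a bound that is too weak.
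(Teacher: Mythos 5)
Your proof is correct and follows essentially the same route as the paper: the lower bound via \eqref{also1} with $N=Q_8$ and $\beta(Q_8)=6$, and the upper bound via \eqref{red} and \eqref{halterkoch} applied to a normal Klein four-subgroup with Klein four-quotient (your explicit choice, the center $Z(Q_8)\times C_2$, is exactly such a subgroup). The only cosmetic difference is that the paper cites $\beta(Q_8)=6$ from Schmid rather than the table, and does not name the $K_4$ explicitly.
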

\begin{proof}
We have the lower bound $\beta(Q_8 \times C_2) \ge \beta(Q_8) +\beta(C_2)-1 = 7$ by \eqref{also1} since $\beta(Q_8) = 6$ by \cite[Lemma~10.1]{schmid}. On the other hand $G$ has a normal subgroup $K_4$ such that $G/K_4 \cong K_4$. Hence again $\beta(G) \le \beta_{\beta(K_4)}(K_4)=7$ by \eqref{red} and \eqref{halterkoch}.
\end{proof}

\begin{proposition} \label{prop:S3C3}
$\beta(S_3 \times C_3) =8$.
\end{proposition}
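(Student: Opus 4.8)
The strategy mirrors the two preceding propositions: establish the lower bound from the product formula \eqref{also1} and the upper bound from the reduction lemma \eqref{red}. For the lower bound, take $N = S_3 \times \{1\}$, so $G/N \cong C_3$ is abelian, and \eqref{also1} gives $\beta(S_3 \times C_3) \ge \beta(S_3) + \beta(C_3) - 1 = 4 + 3 - 1 = 6$; this is not yet enough. A better choice is $N = C_3 \times C_3$ (the product of the cyclic subgroup $\langle a \rangle \le S_3$ of order $3$ with the $C_3$ factor), which is normal in $G$ with $G/N \cong C_2$ abelian, yielding $\beta(G) \ge \beta(C_2) + \beta(C_3 \times C_3) - 1 = 2 + 5 - 1 = 6$ — still $6$. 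To reach $8$ I would instead apply \eqref{also1} with $N$ the normal subgroup generated by the $C_3$ direct factor together with the rotation subgroup of $S_3$, but arranged so that $G/N$ picks up more; concretely, since $S_3 \times C_3$ has $S_3 \times \{1\}$ as a normal subgroup with abelian quotient $C_3$, and $S_3$ itself is a quotient, one can also go the other way: treat $C_3$ (the direct factor) as $N$, so $G/N \cong S_3$, but $S_3$ is not abelian, so \eqref{also1} does not apply in that direction. The cleanest route to the lower bound $8$ is to use $N = A$ where $A = \langle a \rangle \times C_3 \cong C_3 \times C_3$ as above but then note $G/N \cong C_2$ only gives $6$; so instead I expect the intended lower bound argument uses \eqref{also1} with $G/N$ abelian of larger Noether number, namely $N = \langle b \rangle$ is not normal. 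Reconsidering: the subgroup $S_3 \times \{1\}$ gives quotient $C_3$, and iterating — $\beta(G) \ge \beta(G/N') + \beta(N') - 1$ is not iterable directly, but one can observe $\beta(S_3) = 4 \ge \beta(C_2) + \beta(C_3) - 1$ already saturates, so the genuine gain must come from a multiplicity argument: build an explicit multiplicity free $G$-module $V$ realizing degree $8$, using a $1$-dimensional representation on which the $C_3$ factor acts nontrivially tensored against the $2$-dimensional representation of $S_3$, and exhibit an invariant of degree $8$ that is not decomposable.

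For the upper bound $\beta(G) \le 8$, I would locate a normal subgroup $N \triangleleft G$ with $N$ abelian and $G/N$ small enough that \eqref{red} combined with a known $k$th Noether number is sharp. The natural candidate is $N = C_3 \times C_3$ (as above), which is normal with $G/N \cong C_2$, so $\beta(G) \le \beta_{\beta(C_2)}(C_3 \times C_3) = \beta_2(C_3 \times C_3)$. By \eqref{halterkoch} with $n = m = 3$ and $k = 2$ this equals $3 \cdot 2 + 3 - 1 = 8$. Thus \eqref{red} and \eqref{halterkoch} deliver $\beta(S_3 \times C_3) \le 8$ cleanly, exactly paralleling the $K_4 \triangleleft G$, $G/K_4 \cong K_4$ pattern of Propositions~\ref{prop:C4C4} and \ref{prop:Q8C2}.

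Combining the two bounds gives $\beta(S_3 \times C_3) = 8$. The main obstacle is the lower bound: the straightforward applications of \eqref{also1} seem to bottom out at $6$, so one must either find the right normal series — perhaps applying \eqref{also1} to $N = C_3$ (the direct factor, which is central hence normal) and noting $G/N \cong S_3$ forces a different tool — or, more likely, supply a direct construction of a $G$-module and an explicit indecomposable invariant of degree $8$, verifying indecomposability by a weight/grading argument on the relevant isotypic components. I expect the authors exploit the semidirect structure $S_3 \times C_3 = (C_3 \times C_3) \rtimes_{-1} C_2$ together with a sharpened version of \eqref{also1}, or invoke monotonicity of $\beta_k$ to transfer the bound $\beta_2(C_3 \times C_3) = 8$ downward to $\beta(G)$; pinning down which inequality is used for the matching lower bound is the one genuinely delicate point.
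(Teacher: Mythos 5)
Your upper bound is exactly the paper's: $N=C_3\times C_3$ (the rotation subgroup of $S_3$ times the central $C_3$) is normal with $G/N\cong C_2$, so \eqref{red} and \eqref{halterkoch} give $\beta(G)\le\beta_2(C_3\times C_3)=8$. The genuine gap is the lower bound: you never establish $\beta(G)\ge 8$, only $\ge 6$, and then you speculate about an explicit module construction or a ``sharpened version of \eqref{also1}'' without carrying either out. Neither is needed. The choice of normal subgroup you did not try is the one that works: take $N=\langle a\rangle\times\{1\}\cong C_3$, the order-three rotation subgroup of the $S_3$ factor alone (not the central $C_3$, and not all of $C_3\times C_3$). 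This $N$ is normal in $G$, and the quotient is $(S_3/\langle a\rangle)\times C_3\cong C_2\times C_3\cong C_6$, which is abelian of Noether number $6$; equivalently, $S_3\times C_3\cong C_3\rtimes_{-1}C_6$. Then \eqref{also1} applies directly and yields
\[
\beta(S_3\times C_3)\;\ge\;\beta(C_6)+\beta(C_3)-1\;=\;6+3-1\;=\;8,
\]
which together with your upper bound finishes the proof. The lesson is that in \eqref{also1} one wants the abelian quotient $G/N$ to be as large as possible (here $C_6$ rather than $C_2$ or $C_3$), since its Davenport constant enters additively; your attempts bottomed out at $6$ only because you always quotiented by a subgroup containing the central $C_3$ or by all of $C_3\times C_3$, never by the rotation subgroup alone.
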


\begin{proof}
We have  $\beta(S_3 \times C_3)=\beta(C_3\rtimes_{-1}C_6) \ge
 \beta(C_3)+ \beta(C_6)  -1 = 8$ by \eqref{also1} 
and the upper bound $\beta(S_3 \times C_3) \le \beta_{2}(C_3 \times C_3)=8$ by \eqref{red} and \eqref{halterkoch}.
\end{proof}

\begin{proposition} \label{prop:Dic12C2}
$\beta(Dic_{12} \times C_2) =9$.
\end{proposition}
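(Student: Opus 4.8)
The plan is to prove $\beta(Dic_{12}\times C_2)\ge 9$ and $\beta(Dic_{12}\times C_2)\le 9$ separately, using only the reduction inequalities \eqref{red} and \eqref{also1}, the index-two formula \eqref{indextwo}, and the Halter--Koch formula \eqref{halterkoch}. For the lower bound I would apply \eqref{also1} to $G:=Dic_{12}\times C_2$ with the normal subgroup $N=Dic_{12}\times\{1\}$: the quotient $G/N\cong C_2$ is abelian, so $\beta(G)\ge\beta(C_2)+\beta(Dic_{12})-1$. Since $Dic_{12}$ is non-cyclic with a cyclic subgroup of index two, \eqref{indextwo} gives $\beta(Dic_{12})=\tfrac12|Dic_{12}|+2=8$, whence $\beta(G)\ge 2+8-1=9$.

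For the upper bound the key is to choose a normal subgroup whose higher Noether number we can evaluate and whose quotient has small ordinary Noether number. Writing $Dic_{12}=\langle a,b\mid a^6=1,\ b^2=a^3,\ bab^{-1}=a^{-1}\rangle$ and letting $c$ generate the direct factor $C_2$, I would take $N=\langle a^3\rangle\times\langle c\rangle$. The involution $a^3$ is central in $Dic_{12}$ and $c$ is central in $G$, so $N\cong C_2\times C_2$ is a central (hence normal) subgroup of $G$ of order $4$. Modulo $N$ the image of $a$ has order $3$, the image of $b$ has order $2$ (because $b^2=a^3\in N$), and conjugation by $b$ inverts $a$; hence $G/N\cong S_3$ and $\beta(G/N)=4$ (for instance by \eqref{indextwo} applied to $S_3=Dih_6$). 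Then \eqref{red} gives $\beta(G)\le\beta_{\beta(G/N)}(N)=\beta_4(C_2\times C_2)$, and \eqref{halterkoch} evaluates the latter to $2\cdot 4+2-1=9$. Combining the two bounds yields $\beta(Dic_{12}\times C_2)=9$.

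No step is genuinely hard; the only point requiring care is choosing $N$ so that $G/N$ is as small as possible, since a coarser central subgroup overshoots: $N=\langle a^3\rangle$ gives $G/N\cong Dih_{12}$ with $\beta=7$, so \eqref{red} only yields $\beta_7(C_2)=14$, and $N=\langle a\rangle\times C_2$ gives $G/N\cong C_2$, so \eqref{red} yields $\beta_2(C_6\times C_2)=13$. Once the right Klein four-subgroup inside the centre of $G$ is spotted, both inequalities follow immediately from the quoted lemmas.
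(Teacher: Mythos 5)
Your proposal is correct and follows essentially the same route as the paper: the lower bound via \eqref{also1} applied to $N=Dic_{12}$ with $\beta(Dic_{12})=8$ from \eqref{indextwo}, and the upper bound via \eqref{red} applied to the central Klein four-subgroup (which is precisely the center of $Dic_{12}\times C_2$, as the paper notes), with $G/K_4\cong Dih_6$ and $\beta_4(C_2\times C_2)=9$ from \eqref{halterkoch}. The only cosmetic difference is that you exhibit the central $K_4$ explicitly by generators, whereas the paper simply identifies it as the center.
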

\begin{proof}
We have $\beta(Dic_{12} \times C_2) \ge \beta(Dic_{12}) +\beta(C_2)-1 = 8+2-1 =9$ by \eqref{also1} and \eqref{indextwo}.
On the other hand the center of the group $G =Dic_{12} \times C_2$ is isomorphic to the Klein four-group $K_4 = C_2 \times C_2$ and we have $G/K_4 \cong Dih_6$. So we get $\beta(G) \le \beta_{\beta(Dih_6)}(K_4) = \beta_4(K_4) = 2 \cdot 4 +1 =9$ by \eqref{indextwo}, \eqref{red} and \eqref{halterkoch}.
\end{proof}

\begin{proposition} \label{prop:C3Dih8}
$\beta(C_3 \rtimes Dih_8) = 9$.
\end{proposition}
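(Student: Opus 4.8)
The plan is to bracket $\beta(G)$ for $G=C_3\rtimes Dih_8$ between $9$ and $9$ by means of the reduction lemmas \eqref{also1} and \eqref{red}, just as in the preceding propositions. Write $C_3=\langle c\rangle$ for the normal Sylow $3$-subgroup and $Dih_8=\langle r,s\mid r^4=s^2=1,\ srs=r^{-1}\rangle$. Since $\Aut(C_3)\cong C_2$, the conjugation action of $Dih_8$ on $C_3$ factors through an index-two subgroup, and the group $(24,8)$ is exactly the case in which a Klein four-subgroup of $Dih_8$ acts trivially; so we may assume $\langle r^2,s\rangle$ centralizes $c$ while $r$ inverts it. (Had the cyclic $C_4=\langle r\rangle$ been the one centralizing $c$, we would have obtained $Dih_{24}$ instead.) With this bookkeeping in hand, everything comes down to choosing an appropriate normal subgroup for each bound.

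For the lower bound I would take $N=\langle c,r\rangle$. This subgroup has order $12$, hence is normal of index $2$ in $G$, and since $r$ inverts $c$ it is isomorphic to $C_3\rtimes C_4=Dic_{12}$. As $G/N\cong C_2$ is abelian, \eqref{also1} gives $\beta(G)\ge\beta(C_2)+\beta(Dic_{12})-1=2+8-1=9$, using $\beta(Dic_{12})=8$ from \eqref{indextwo}.

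For the upper bound I would use instead the Klein four-subgroup $K_4=\langle r^2,s\rangle$: it is normal in $Dih_8$ and, by our normalization, is centralized by $c$, hence $K_4\triangleleft G$; moreover $G/K_4$ is generated by the images of $c$ and $r$ subject to $\bar c^3=\bar r^2=1$ and $\bar r\bar c\bar r^{-1}=\bar c^{-1}$, so $G/K_4\cong Dih_6$. Then \eqref{red}, together with $\beta(Dih_6)=4$ (again from \eqref{indextwo}) and the Halter--Koch formula \eqref{halterkoch}, yields $\beta(G)\le\beta_{\beta(Dih_6)}(K_4)=\beta_4(C_2\times C_2)=2\cdot4+2-1=9$, whence $\beta(G)=9$. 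The only point requiring care is the initial identification of the correct semidirect product $C_3\rtimes Dih_8$, so that $\langle c,r\rangle$ and $\langle r^2,s\rangle$ play their assigned roles; after that the argument is routine, the essential feature being the numerical coincidence that the generic lower bound $\beta(Dic_{12})+1$ and the generic upper bound $\beta_4(K_4)$ both equal $9$, which is what lets the elementary tools settle this group.
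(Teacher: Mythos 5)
Your proof is correct and follows essentially the same route as the paper: the lower bound via the index-two subgroup $Dic_{12}=\langle c,r\rangle$ and \eqref{also1}, and the upper bound via the normal Klein four-group (the kernel of the $Dih_8$-action on $C_3$) with quotient $Dih_6$, using \eqref{red} and \eqref{halterkoch} to get $\beta_4(K_4)=9$. The only addition is your explicit verification that the kernel of the action is the Klein four-subgroup (not $C_4$), which the paper uses implicitly and records in Example~\ref{ex:c3dih8}.
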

\begin{proof} 
The group $G$ has an index two subgroup $N\cong C_3\rtimes C_4=Dic_{12}$. 
By \eqref{indextwo} we have 
$\beta(N)=\frac 12|N|+2=8$, implying by \eqref{also1} that 
$\beta(G)\ge 8+2-1=9$. 

For the reverse inequality observe that the  kernel of the action of $Dih_8$ on $C_3$ is isomorphic to the Klein group $K_4= C_2 \times C_2$. So the subgroup $K_4 \le Dih_8$ is normal in $G$ and as $Dih_8 /K_4 \cong C_2$ acts by inversion on $C_3$ we have $G/K_4 \cong Dih_6$. Therefore $\beta(G) \le \beta_{\beta(Dih_6)}(K_4) = \beta_4(K_4) = 2 \cdot 4 +1 =9$  by \eqref{indextwo}, \eqref{red} and \eqref{halterkoch}.
\end{proof}

\begin{example}\label{ex:c3dih8}
For later reference let us  construct  here a representation of $G = C_3 \rtimes Dih_8$ (where the kernel of the conjugation action of $Dih_8$ on $C_3$ is isomorphic to $C_2\times C_2$) on which the Noether number is  attained.
Consider the two-dimensional $G$-module $U=\mathbb{F}^2$ on which the representation is given by the matrices
\[
a= \begin{pmatrix}
0 & 1  \\
1 & 0   
\end{pmatrix}, \quad 
b=\begin{pmatrix}
-1 & 0  \\
0 & 1  
\end{pmatrix}, \quad
c=\begin{pmatrix}
\zeta & 0  \\
0 & \zeta^2 
\end{pmatrix} 
\]
where $\zeta$ is a primitive third root of unity. 
Here $\langle a,b \rangle\cong Dih_8$,  $c^a = c^{-1}$ and $c^b=c$, so these matrices are indeed generating the group in question. Since this group is generated also by the pseudo-reflections $a,b,$ and $ac$, the $G$-module structure of $S(U)$ is well known by the Shephard-Todd-Chevalley theorem \cite{chevalley}, \cite{shephard-todd}.  In particular, $S(U)^G=\mathbb{F}[x,y]^G$ is a polynomial ring generated by $(xy)^2,x^6+y^6$. 
The element $x^7y-xy^7$ is not contained in the ideal of $S(U)$ generated by the above two invariants. It  spans a one-dimensional $G$-invariant subspace isomorphic to the one-dimensional $G$-module $\mathbb{F}_{\chi}$ corresponding to the order two homomorphism $\chi:G\to \mathbb{F}^{\times}$ given by the determinant on $GL(U)$.  
It follows that in $S(U\oplus \mathbb{F}_{\chi})$ the element $(x^7y-xy^7)z$ is an indecomposable $G$-invariant, where $z$ spans the summand $\mathbb{F}_{\chi}$. 
\end{example}

\subsection{Some groups with an abelian normal subgroup of index two or three} 

In this section we shall discuss a group $G$ with an abelian normal subgroup $A$ of index $2$ or $3$. Let $V$ be a $G$-module over an algebraically closed base field $\mathbb{F}$. We shall fix the following notation:  
$I:=S(V)^A$, $R:=S(V)^G$. By Clifford theory we know that each irreducible $G$-module is $1$-dimensional or is induced from a $1$-dimensional $A$-module. 
Therefore it is possible to choose the variables in $S(V)$ to be $A$-eigenvectors that are permuted up to scalars by the action of $G$. We shall tacitly assume that the variables were chosen that way. It follows that $I$ is spanned by $A$-invariant monomials, permuted by $G$ up to scalars. 
It will be convenient to use the notation $f^g=g^{-1}\cdot f$ for 
$g\in G$ and $f\in S(V)$. 
For a non-zero scalar multiple $x$ of a variable let us denote by  $\theta(x)\in \widehat A$ its weight (i.e. $x^a=(\theta(x)(a))x\in \mathbb{F} x$ for all $a\in A$). 
Here $\widehat A$ is the character group of $A$, i.e. the abelian group consisting of the group homomorphisms $A\to \mathbb{F}^\times$. Note that $\widehat A\cong A$ when $\mathbb{F}$ is algebraically closed. 
For a non-zero scalar multiple $w$ of a monomial $y_1\dots y_d$ the multiset  
$\Phi(w)=
\{\theta(y_1),\dots,\theta(y_d)\}$ is called the {\it weight sequence} of $w$. It is a sequence over the abelian group 
$\widehat A$ (where the order of the elements in a {\it sequence} is disregarded and repetition is allowed). Sequences over $\widehat A$ form a monoid with multiplication denoted by ``$\bdot$''  
(see Section~\ref{sec:davenport} for more discussion of this monoid), such that for monomials 
$w,w'$ we have $\Phi(ww')=\Phi(w)\bdot \Phi(w')$. 
The action of $G$ on $A$ by conjugation induces an action on $\widehat A$ by $\theta^g(a)=\theta(gag^{-1})$.  For a variable $x$, $g\in G$ and $a\in A$ we have that $x^g$ is a non-zero scalar multiple of a variable, and 
$(x^g)^a=(x^{gag^{-1}})^g=(\theta(x)(gag^{-1})x)^g=\theta(x)^g(a)x^g$, showing the equality  $\theta(x^g)=\theta(x)^g$.  Consequently,  for a non-zero scalar multiple $w$ of a monomial we have $\Phi(w^g)=\Phi(w)^g$ where the action of $G$ on $\widehat A$ is extended componentwise to sequences over $\widehat A$. 
Denote by $\tau=\tau^G_A:I\to R$ the {\it relative transfer map} 
$f\mapsto \sum_{g\in G/A} f^g$. It is a surjective $R$-module homomorphism, preserving the grading (inherited from the standard grading on the polynomial ring $S(V)$). 
The following lemma provides the common basis for the proofs of the upper bounds for Noether numbers below:  

\begin{lemma}\label{lemma:s1s2} Suppose that for any zero-sum sequence 
$S$ over $\widehat A$ with $|S|>d$ which does not factor as the product  of  
$1+[G:A]$ non-empty zero-sum sequences, we have a factorization 
$S=S_1\bdot S_2$ as the product of two zero-sum sequences satisfying the following: 
for each $g\in G\setminus A$, the sequence $S_1\bdot S_2^g$ factors as the product of $1+[G:A]$ non-empty zero-sum sequences. 
Then we have the inequality $\beta(G)\le d$.  
\end{lemma}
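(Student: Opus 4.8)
The plan is to bound the top degree of the invariant ring $R=S(V)^G$ by analyzing the relative transfer map $\tau=\tau^G_A\colon I\to R$, exploiting the fact (noted in the text) that $\tau$ is a surjective, grading-preserving $R$-module homomorphism, while $I=S(V)^A$ is spanned by $A$-invariant monomials permuted up to scalars by $G$. Since $R$ is generated in degrees $\le d$ if and only if every homogeneous element of $R_+$ of degree $>d$ lies in $(R_+)^2$, and since $\tau$ is $R$-linear and surjective, it suffices to show: for every $A$-invariant monomial $w$ with $\deg w=|\Phi(w)|>d$, the invariant $\tau(w)=\sum_{g\in G/A}w^g$ is decomposable in $R$, i.e.\ lies in $(R_+)^2$. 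The weight sequence $S:=\Phi(w)$ is a zero-sum sequence over $\widehat A$ (because $w$ is $A$-invariant), and $|S|>d$.

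First I would dispose of the easy case: if $S$ factors as a product of $1+[G:A]$ non-empty zero-sum subsequences, then correspondingly $w$ is a product $w=w_0w_1\cdots w_{[G:A]}$ of $1+[G:A]$ $A$-invariant monomials of positive degree; applying $\tau$ and using that an $A$-invariant product of $1+[G:A]$ factors always yields a decomposable $G$-invariant (this is the standard transfer/Göbel-type argument underlying \eqref{red} — with $[G:A]+1$ factors one can reindex the sum over $G/A$ so that at least two factors are simultaneously $G$-fixed after symmetrization, or more precisely expand $\prod_g (w_0\cdots w_{[G:A]})^g$ appropriately), we conclude $\tau(w)\in (R_+)^2$. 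So assume $S$ does \emph{not} so factor. Then by hypothesis we get a splitting $S=S_1\bdot S_2$ into two zero-sum sequences, which lifts to a factorization $w=w_1w_2$ into two $A$-invariant monomials with $\Phi(w_i)=S_i$.

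Now comes the core computation. Consider $\tau(w)=\sum_{g\in G/A}(w_1w_2)^g=\sum_g w_1^g w_2^g$. I would compare this with the products $\tau(w_1)\cdot(\text{something})$: expanding $\tau(w_1)\tau(w_2)=\sum_{g,h}w_1^g w_2^h$, the diagonal terms $g=h$ give exactly $\tau(w)$, so
\[
\tau(w)=\tau(w_1)\,\tau(w_2)-\sum_{g\ne h} w_1^g w_2^h .
\]
The first term is manifestly in $(R_+)^2$. For each off-diagonal pair, writing $h=gk$ with $k\in (G/A)\setminus\{1\}$ and reindexing, the cross terms organize into sums of the form $\sum_{g}(w_1\cdot w_2^k)^g$, i.e.\ $\tau(w_1\cdot w_2^k)$ for representatives $k$ of the nontrivial cosets — here I use that $\tau$ is a sum over the coset space and that $w_2^{gk}=(w_2^k)^g$. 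The weight sequence of $w_1\cdot w_2^k$ is $S_1\bdot S_2^{g}$ (for the appropriate $g\in G\setminus A$ representing $k$), which by the hypothesis of the lemma factors as a product of $1+[G:A]$ non-empty zero-sum sequences; hence, by the easy case already handled, $\tau(w_1\cdot w_2^k)\in (R_+)^2$. Therefore $\tau(w)$ is a sum of decomposable invariants, so $\tau(w)\in(R_+)^2$, and since such $\tau(w)$ span $R_{>d}$ we get $\beta(S(V)^G)\le d$; taking the supremum over $G$-modules $V$ gives $\beta(G)\le d$.

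The main obstacle I anticipate is the bookkeeping in the cross-term step: making precise the claim that a product of $1+[G:A]$ $A$-invariant monomials has decomposable transfer (this is where one genuinely needs $[G:A]+1$ rather than $[G:A]$ factors, and it is the combinatorial heart, essentially a polarization/averaging identity over the finite set $G/A$), and tracking which element $g\in G\setminus A$ the hypothesis must be applied to for each off-diagonal coset $k$ — one must check that as $g$ ranges over coset representatives one indeed hits every $g\in G\setminus A$ needed, so the quantifier ``for each $g\in G\setminus A$'' in the statement is exactly what is consumed. A secondary subtlety is that the lift of the sequence factorization $S=S_1\bdot S_2$ to a monomial factorization $w=w_1w_2$ is canonical once we fix the monomial $w$ (just partition its variables according to the chosen partition of the weight multiset), so no genuine choice issue arises there.
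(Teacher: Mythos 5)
Your argument is correct and runs parallel to the paper's, but the scaffolding is genuinely different. The paper never decomposes the invariants $\tau(w)$ themselves: citing \cite[Proposition 1.5]{CzD:1}, it suffices to prove $I_{\ge d+1}\subseteq I_+R_+$, and for $w=w_1w_2$ it uses the identity $w=w_1\tau(w_2)-\sum_g w_1w_2^{g}$ (sum over the nontrivial cosets), handling each term via $(I_+)^{1+[G:A]}\subseteq I_+R_+$ (\cite[Proposition 1.6]{CzD:1}) and $\tau(w_2)\in R_+$. You instead show $\tau(w)\in(R_+)^2$, replacing the appeal to Proposition 1.5 by the elementary remark that $\tau$ is a surjective graded $R$-module map, and your identity $\tau(w)=\tau(w_1)\tau(w_2)-\sum_k\tau(w_1w_2^{k})$ is exactly the $\tau$-image of the paper's identity. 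What your route buys is independence from the relative criterion of \cite[Proposition 1.5]{CzD:1}; what the paper's buys is a slightly stronger intermediate statement ($I_{\ge d+1}\subseteq I_+R_+$, not merely decomposability of the transfers) and a shorter identity. Two small repairs are needed in your write-up. First, your ``easy case'' (the transfer of a product of $1+[G:A]$ elements of $I_+$ is decomposable) is precisely where the paper invokes \cite[Proposition 1.6]{CzD:1}; your version follows from it since $\tau(I_+R_+)\subseteq\tau(I_+)R_+\subseteq(R_+)^2$, but your sketched justification (``two factors simultaneously $G$-fixed after symmetrization'') is not a proof, so this should be cited rather than improvised. Second, with the paper's convention $f^g=g^{-1}\cdot f$ one has $w_2^{gk}=(w_2^{g})^{k}$, not $(w_2^{k})^{g}$; reindex the off-diagonal terms by $h=kg$ instead (legitimate because $A$ is normal, so $\bar k=\bar h\bar g^{-1}$ runs over the nontrivial cosets independently of $\bar g$), after which the cross terms are indeed $\tau(w_1w_2^{k})$ with $k\in G\setminus A$, the hypothesis on $S_1\bdot S_2^{k}$ applies, and your conclusion stands.
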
 
\begin{proof} 
By Proposition 1.5 in \cite{CzD:1}, in order to prove $\beta(S(V)^G)\le d$ it is sufficient to show that $I_{\ge d+1}\subseteq I_+R_+$ where $I_{\ge d+1}$ stands for the sum of homogeneous components of $I$ of degree at least $d+1$. That is, we need to show that any $A$-invariant monomial $w$ with $\deg(w)> d$ belongs to $I_+R_+$. 
So take an arbitrary $A$-invariant monomial with $\deg(w)>d$, and denote by $S$ its weight sequence $\Phi(w)$. If $S$ factors into a product of $1+[G:A]$ zero-sum sequences, then 
$w$ factors as the product of $1+[G:A]$ non-trivial $A$-invariant monomials. 
Thus $w\in  (I_+)^{1+[G:A]}$. On the other hand  by Proposition 1.6 in \cite{CzD:1} we have 
$(I_+)^{1+[G:A]}\subseteq I_+R_+$, implying in turn that $w\in I_+R_+$.  
Next, suppose that $S$ does not factor as the product of $1+[G:A]$ non-empty zero-sum sequences. Then by assumption we have a factorization 
$S=S_1\bdot S_2$ with the properties in the statement. The monomial $w$ factorizes as 
$w=w_1w_2$ with $\Phi(w_1)=S_1$, $\Phi(w_2)=S_2$. 
We have the equality 
\[w=w_1\tau(w_2)-\sum w_1w_2^g,\] 
where the summation above ranges over a set of representatives of the $A$-cosets in $G$ which are different from $A$. For each such summand, $\Phi(w_1w_2^g)=S_1\bdot S_2^g$ factors as the product of $1+[G:A]$ non-empty zero-sum sequences, so $w_1w_2^g\in (I_+)^{1+[G:A]}\subseteq I_+R_+$. The first summand $w_1\tau(w_2)$ above is also contained in $I_+R_+$ since $\tau(w_2)\in R_+$, implying in turn that $w\in I_+R_+$. 
Thus we showed $I_{\ge d+1}\subseteq I_+R_+$. 
\end{proof}

\begin{proposition}\label{prop:a4xc2} 
$\beta(A_4\times C_2)= 8$.
\end{proposition}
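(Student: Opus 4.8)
The plan is to combine the lower bound from \eqref{also1} with the refined upper bound from Lemma~\ref{lemma:s1s2}. For the lower bound, observe that the Klein four-subgroup $V\cong C_2\times C_2$ of $A_4$ gives a normal subgroup $N:=V\times\{1\}$ of $G:=A_4\times C_2$ with abelian quotient $G/N\cong(A_4/V)\times C_2\cong C_6$, so \eqref{also1} yields $\beta(G)\ge\beta(C_6)+\beta(C_2\times C_2)-1=6+3-1=8$. For the upper bound I would apply Lemma~\ref{lemma:s1s2} to the abelian normal subgroup $A:=V\times C_2\cong C_2^3$, of index $[G:A]=3$ (note that $G$ has no abelian normal subgroup of index two, so this is the only option), taking $d=8$. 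Here $\widehat A\cong\mathbb F_2^3$, the $G$-action on $\widehat A$ factors through $G/A\cong C_3$, and a generator $\sigma$ of the image fixes the non-zero character $\delta$ of the $\widehat{C_2}$-summand and cyclically permutes the non-zero characters $\alpha,\beta,\gamma$ of $\widehat V$ (with $\alpha+\beta+\gamma=0$); write $F$ for the zero-sum sequence over $\widehat A$ in which each of the seven non-zero elements occurs exactly once. Note that \eqref{red} by itself gives only $\beta(G)\le\beta_3(C_2^3)$, and $\beta_3(C_2^3)\ge 9$ (witnessed by $F\bdot g\bdot g$ for any non-zero $g$, which has exactly three, not four, non-empty zero-sum factors), so Lemma~\ref{lemma:s1s2} is genuinely needed to reach $8$.

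The crux — and the step I expect to be the main obstacle — is the combinatorial classification: \emph{a zero-sum sequence $S$ over $\mathbb F_2^3$ with $|S|\ge 9$ that does not factor as a product of four non-empty zero-sum sequences must equal $F\bdot g\bdot g$ for some non-zero $g$} (in particular $|S|=9$). I would prove this by repeated appeal to $\mathsf D(C_2^3)=4$, so that every atom over $\mathbb F_2^3$ has length at most $4$ while $F$ — having no repeated entry and hence no zero-sum subsequence of length $\le 2$ — factors into at most $\lfloor7/3\rfloor=2$ non-empty zero-sum sequences: one first excludes $0\in\supp(S)$ and the existence of two distinct elements of multiplicity $\ge2$ (in each case peeling off copies of $0$ or length-two atoms $h\bdot h$ quickly leaves a zero-sum sequence too long to be a single atom), then shows that the unique element of multiplicity $\ge2$ has multiplicity exactly $3$ and that all seven non-zero elements occur, forcing $S=F\bdot g\bdot g$; finally one checks that $F\bdot g\bdot g$ indeed has at most three non-empty zero-sum factors. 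The care needed here is to make sure the list is exhaustive, i.e.\ that no family survives with $0$ in the support or with $|S|\ge10$.

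Granting the classification, it remains to produce, for each $S=F\bdot g\bdot g$, a factorization $S=S_1\bdot S_2$ into zero-sum sequences such that $S_1\bdot S_2^{\,h}$ factors into four non-empty zero-sum sequences for every $h\in G\setminus A$ — equivalently for $\rho=\sigma$ and $\rho=\sigma^2$. I would take $S_2:=x\bdot\delta\bdot(x+\delta)$, the three non-zero elements of one of the three $2$-dimensional subspaces $W_x=\{0,x,\delta,x+\delta\}$ containing $\delta$, choosing $x\in\{\alpha,\beta,\gamma\}$ so that $g\in W_x$ (always possible). Then $S_2$ is zero-sum, and since $g$ has multiplicity $3$ in $S$ it is a subsequence, so $S_1:=S\bdot S_2^{-1}$ is a well-defined zero-sum sequence of length $6$. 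As $\sigma$ permutes $W_\alpha,W_\beta,W_\gamma$ cyclically, $S_2^{\,\rho}=x'\bdot\delta\bdot(x'+\delta)$ with $x'\ne x$ for any non-trivial power $\rho$ of $\sigma$, and a short bookkeeping of multiplicities shows that the element $x$ (or $x+\delta$, in the case $g=x$) does not occur in $S_1\bdot S_2^{\,\rho}$; hence $S_1\bdot S_2^{\,\rho}$ is a zero-sum sequence of length $9$ not of the form $F\bdot h\bdot h$, so by the classification it factors into four non-empty zero-sum sequences. This verifies the hypothesis of Lemma~\ref{lemma:s1s2}, giving $\beta(G)\le 8$ and hence $\beta(A_4\times C_2)=8$.
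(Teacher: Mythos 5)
Your proposal is correct and follows essentially the same route as the paper: the identical lower bound $\beta(G)\ge \beta(C_6)+\beta(C_2\times C_2)-1=8$ via \eqref{also1}, and the upper bound via Lemma~\ref{lemma:s1s2} applied to the index-three subgroup $A\cong C_2\times C_2\times C_2$, resting on the same combinatorial fact that the only zero-sum sequences over $\widehat A$ of length at least $9$ not factoring into four non-empty zero-sum sequences are of the form $F\bdot g\bdot g$, which one then splits suitably as $S_1\bdot S_2$. The differences are only tactical: the paper quotes $\beta_2(C_2^3)=7$ from Delorme--Ordaz--Quiroz where you rederive what is needed from $\mathsf D(C_2^3)=4$ by peeling, and its splitting is verified by exhibiting a repeated element in $S_1\bdot S_2^g$ (then applying $\mathsf D(A)=4$) rather than, as you do, a missing support element combined with the contrapositive of the classification.
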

\begin{proof}
First we show the inequality $\beta(A_4\times C_2)\le 8$. 
We have $A_4=K\rtimes \langle g\rangle$ where $K\cong C_2\times C_2$ and $\langle g\rangle \cong C_3$, so 
$G:=A_4\times C_2$ contains an abelian normal subgroup $A=K\times C_2$, where $g$ centralizes the last summand $C_2$, and conjugation by $g$ gives an order $3$ automorphism of $K$. Write the character group $\widehat A$ additively, and denote its elements by 
$\{(0,\varepsilon),(a,\varepsilon),(b,\varepsilon), (c,\varepsilon)\mid \varepsilon =0,1\}$, where $(0,0)$ is the zero element, $\{(a,0),(b,0),(c,0),(0,0)\}$ is a subgroup, and the action of $\langle g\rangle$  on $A$ induces the action on $\widehat A$ given by $(a,\varepsilon)^g=(b,\varepsilon)$, $(b,\varepsilon)^g=(c,\varepsilon)$, 
$(c,\varepsilon)^g=(a,\varepsilon)$.  
We shall apply Lemma~\ref{lemma:s1s2}, so take a zero-sum sequence $S$ over $\widehat A$ with $|S|\ge 9$, such that $S$ does not factor as the product of four non-empty zero-sum sequences over $\widehat A$.  
Since $\beta_2(A)=7$ by Lemma 3.7 in \cite{delorme}, $0$ does not occur in $S$ 
(otherwise $S=\{0\}\bdot T$ and $|T|\ge 8>\beta_2(A)=7$ implies that $T$ is the product of three non-empty zero-sum sequences, a contradiction). 
On the other hand, $|S|>7=|\widehat A\setminus \{(0,0)\}|$ implies that $S$ contains an element $s$ with multiplicity at least $2$, hence $S=T_0 \bdot T$, where $T_0,T$ are zero-sum sequences, $T_0=\{s,s\}$ has length $2$, so $|T|\ge 7$. 
Note that $T$ does not contain a zero-sum subsequence of length $2$ (since otherwise 
$\mathsf D(A)=4$ would imply that $S$ is the product of four non-empty zero-sum sequences). 
It follows that $T$ consists of the non-zero elements in $\widehat A$ (each having multiplicity one), and so 
$T=T_1T_2$ where $T_1=\{(a,1),(b,0),(c,0),(0,1)\}$, $T_2=\{(a,0),(b,1),(c,1)\}$. 
Set $S_1:=T_0\bdot T_1$ and $S_2:=T_2$. The factorization $S=S_1\bdot S_2$ fulfills the requirements of Lemma~\ref{lemma:s1s2}: indeed, $(a,1)=(c,1)^g$ occurs both in $S_1$ and $S_2^g$, so $S$ can be written as $S=T_0\bdot \{(a,1),(a,1)\}\bdot U$ where $U$ is a zero-sum sequence of length $5$. Hence by $\mathsf D(A)=4$ we get that $S_1\bdot S_2^g$ factors as the product of four non-empty zero-sum sequences. 
Similarly, $(a,1)=(b,1)^{g^2}$ occurs also in $S_2^{g^2}$, hence $S_1\bdot S_2^{g^2}$ 
also factors as the product of four non-empty zero-sum sequences.
By Lemma~\ref{lemma:s1s2} we conclude that $\beta(G)\le 8$. 

The subgroup $K$ is normal in $G$ and $G/K\cong C_6$, hence 
by \eqref{also1} we obtain the reverse inequality 
$\beta(G)\ge \beta(K)+\beta(C_6)-1=3+6-1=8$. 
\end{proof}  

\begin{example}\label{ex:a4xc2}
For later use we present a concrete  $A_4\times C_2$-module on which the Noether number is attained. We keep the notation used in the proof of 
Proposition~\ref{prop:a4xc2}. 
Let $W$ be the $3$-dimensional irreducible $A_4$-module (the non-trivial direct summand in the $4$-dimensional standard permutation representation of $A_4$) viewed as a $G$-module under the natural surjection $G\to A_4$  with kernel $C_2$. Let $U$ be the non-trivial $1$-dimensional $C_2$-module viewed as a $G$-module under the natural surjection $G\to C_2$ with kernel $A_4$. Set $V:=W\oplus (W\otimes U) \oplus U$. 
Then $S(V)=\mathbb{F}[x_1,x_2,x_3,y_1,y_2,y_3,z]$ where $x_1,x_2,x_3$ are $A$-eigenvectors  with weight 
$(a,0),(b,0),(c,0)$ and they are permuted cyclically by $g$.  Similarly  
$y_1,y_2,y_3$ are $A$-eigenvectors with weight $(a,1),(b,1),(c,1)$ and they are permuted cyclically by $g$. The last variable $z$ is fixed by $g$ and is an $A$-eigenvector with weight $(0,1)$. Denote by $\tau=\tau^G_A:I\to R$ the relative transfer map $f\mapsto  f+f^g+f^{g^2}$. Then $R$ is spanned as an $\mathbb{F}$-vector space by $\tau(w)$ where 
$w$ ranges over the $A$-invariant monomials. There is an $\mathbb{N}^3$-grading on $\mathbb{F}[V]$ given by 
\[\deg_3(x_1^{a_1}x_2^{a_2}x_3^{a_3}y_1^{b_1}y_2^{b_2}y_3^{b_3}z^d):=(a_1+a_2+a_3,b_1+b_2+b_3,d).\] 
This is preserved by the action of $G$, hence $I$ and $R$ are $\mathbb{N}^3$-graded subalgebras. Moreover, $\tau$ preserves the $\mathbb{N}^3$-grading. We claim that $\tau(x_1^3x_2^3y_3z)$ is indecomposable, that is, it is not contained in $(R_+)^2$. Suppose to the contrary that  $\tau(x_1^3x_2^3y_3z)$ is a linear combination of elements of the form $\tau(w)\tau(w')$ where  $w$ and $w'$ are non-trivial $A$-invariant monomials  and $\deg_3(w)+\deg_3(w')=(6,1,1)$. There is no $A$-invariant variable in $\mathbb{F}[V]$, so both $w$ and $w'$ above have total degree at least $2$. 
The $\langle g\rangle$-orbits of the $A$-invariant monomials $w$ with $\deg_3(w)=(*,*,1)$, 
$\deg_3(w)\neq  (6,1,1)$ and where $\deg_3(w)$ is dominated by $(6,1,1)$ are 
\[x_1x_2y_3z,\quad x_1^3x_2y_3z,\quad x_1x_2^3y_3z,\quad x_1x_2x_3^2y_3z.\] 
The $G$-invariants of degree $2$ or $4$ depending only on $x_1,x_2,x_3$ are 
\[\tau(x_1^2)=x_1^2+x_2^2+x_3^2,\quad  \tau(x_1^2x_2^2)=x_1^2x_2^2+x_1^2x_3^2+x_2^2x_3^2, \quad  
\tau(x_1^4)=x_1^4+x_2^4+x_3^4.\] 
It follows that 
\begin{align*}
\tau(x_1^3x_2^3y_3z) &= \tau(x_1x_2y_3z)(\lambda_1 \tau(x_1^4)+\lambda_2 \tau(x_1^2x_2^2)) \\
& + \tau(x_1^2)(\mu_1   \tau(x_1^3x_2y_3z)+\mu_2\tau(x_1x_2^3y_3z)+\mu_3\tau(x_1x_2x_3^2y_3z))
\end{align*} 
for some $\lambda_1,\lambda_2,\mu_1,\mu_2,\mu_3\in \mathbb{F}$.  
Comparing the coefficients of 
$x_1^5x_2y_3z$, $x_1x_2^5y_3z$, $x_1x_2x_3^4y_3z$, $x_1x_2^3x_3^2y_3z$
 we conclude 
 \[0=\lambda_1+\mu_1=\lambda_1+\mu_2=\lambda_1+\mu_3=\lambda_2+\mu_2+\mu_3.\] 
 It follows that the coefficient $\lambda_2+\mu_1+\mu_2$ of $x_1^3x_2^3y_3z$ on the right hand side is $0$, 
 whereas on the left hand side it is $1$. 
 This contradiction implies that $\tau(x_1^3x_2^3y_3z)\notin (R_+)^2$, hence 
 $\beta(S(V)^G)\ge 8$. 
\end{example}

\begin{proposition}\label{prop:k4_rtimes_c2} 
$\beta((C_2\times C_2)\rtimes C_4)= 6$.
\end{proposition}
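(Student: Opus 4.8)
\emph{Plan.} I will establish the bounds $\beta(G)\ge 6$ and $\beta(G)\le 6$ separately. For the lower bound, use the presentation $G=K\rtimes C_4$ with $K\cong C_2\times C_2$ appearing in the name of the group: here $K\triangleleft G$ and $G/K\cong C_4$ is abelian, so \eqref{also1} gives
\[\beta(G)\ \ge\ \beta(C_4)+\beta(K)-1\ =\ 4+3-1\ =\ 6.\]
For the upper bound I would invoke Lemma~\ref{lemma:s1s2}, and the crucial point is the choice of abelian normal subgroup of index two. Rather than the subgroup $C_4\times C_2$ occurring in the alternative name of $G$, I would take $A:=\langle K,\, s^2\rangle$ where $C_4=\langle s\rangle$; this is \emph{elementary} abelian of order $8$ because the image of $C_4\to\Aut(K)\cong S_3$ has order two, so $s^2$ acts trivially on $K$ and commutes with $s$, while $\langle s\rangle\cap K=1$. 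Then $[G:A]=2$, hence $1+[G:A]=3$, and the nontrivial coset $G\setminus A$ acts on $\widehat A\cong \mathbb Z_2^3$ (written additively) through a single involution $\sigma$ which interchanges two coordinates and fixes the third (dualizing the fact that $s$ swaps the two generators of $K$ and fixes $s^2$). I then aim to apply Lemma~\ref{lemma:s1s2} with $d=6$.

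The combinatorial hypothesis then collapses to a single sequence. Let $S$ be a zero-sum sequence over $\widehat A$ with $|S|\ge 7$ that does not factor into three non-empty zero-sum sequences. Since $\mathsf D(\widehat A)=4$, if $0\in S$, or if some element occurs in $S$ with multiplicity $\ge 2$, then removing $0$ (respectively a repeated pair) leaves a zero-sum sequence of length $\ge 5>\mathsf D(\widehat A)$, which is therefore reducible; reinserting the removed part exhibits $S$ as a product of three non-empty zero-sum sequences, a contradiction. Hence $S$ is squarefree with $0\notin S$, which forces $|S|=7$ and $S$ to be the product of all seven non-zero elements of $\widehat A$ (this is indeed zero-sum, the sum of all non-zero vectors of $\mathbb Z_2^3$ being $0$). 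For this unique $S$ I would write down the splitting directly: the non-zero elements split into three $\sigma$-fixed ones (whose product is zero-sum, their sum being $0$) and two $\sigma$-orbits of size two, and the four non-fixed elements form a single coset of the fixed subgroup. Choosing representatives $u_1,u_2$ of the two orbits, their sum $u_1+u_2$ is automatically one of the three $\sigma$-fixed non-zero elements, so $S_1:=\{u_1,u_2,u_1+u_2\}$ is zero-sum; letting $S_2$ be the complementary subsequence (so $S=S_1\bdot S_2$), the sequence $S_2^{\sigma}$ brings back $u_1$ and $u_2$ (as images of $\sigma u_1,\sigma u_2\in S_2$) together with the two remaining fixed elements $w_1,w_2$, whence
\[S_1\bdot S_2^{\sigma}\ =\ \{u_1,u_1\}\bdot\{u_2,u_2\}\bdot\{u_1+u_2,w_1,w_2\}\]
is a product of three non-empty zero-sum sequences, the last factor being the product of all three $\sigma$-fixed non-zero elements. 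By Lemma~\ref{lemma:s1s2} this yields $\beta(G)\le 6$, and with the lower bound $\beta(G)=6$.

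The one genuine obstacle is finding the right set-up for the upper bound: the argument hinges on applying Lemma~\ref{lemma:s1s2} to the elementary abelian subgroup $A\cong C_2^3$, for which $\mathsf D_2(C_2^3)=7$ cuts the verification down to a single sequence. With the more obvious choice $A\cong C_4\times C_2$ one would have to analyse zero-sum sequences of lengths up to $9$ over $\widehat A$, and for $d=6$ the resulting statement does not appear to hold (there is a length-$7$ zero-sum sequence not factoring into three for which no admissible splitting exists). Once $A$ is chosen correctly, the remaining work is exactly the short finite check sketched above.
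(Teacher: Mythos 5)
Your proof is correct and follows essentially the same route as the paper: the lower bound via \eqref{also1} with $K=C_2\times C_2$ and $G/K\cong C_4$, and the upper bound via Lemma~\ref{lemma:s1s2} applied to the index-two elementary abelian subgroup $A\cong C_2^3$, reducing to the unique squarefree zero-sum sequence of length $7$ and splitting it exactly as in the paper (your $u_1,u_2,u_1+u_2$ versus the paper's $(b,0),(b,1),(0,1)$). The only difference is cosmetic: you describe the splitting coordinate-freely, while the paper writes out explicit elements of $\widehat A$.
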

\begin{proof} 
We have $G=(C_2\times C_2)\rtimes \langle g\rangle$ where $\langle g\rangle \cong C_4$. 
The group $G$ contains the abelian normal subgroup $A:=C_2\times C_2\times \langle g^2\rangle$. 
We use the same notation for the elements of $\widehat A$ as in the proof of Proposition~\ref{prop:a4xc2}. 
The action of $\langle g\rangle$ on $\widehat A$ is given by 
\[(a,\varepsilon)^g=(b,\varepsilon),\quad (b,\varepsilon)^g= (a,\varepsilon), \quad (c,\varepsilon)^g =(c,\varepsilon), 
\quad (0,\varepsilon)^g=(0,\varepsilon)\quad \mbox{ for }\varepsilon=0,1.\] 
Take a zero-sum sequence $S$ over  $\widehat A$ with $|S|\ge 7$ which is not the product of three non-empty zero-sum sequences. Since  $\mathsf D(A)=4$ by Lemma 3.7 in \cite{delorme},  $S$  has no zero-sum subsequence of length at most $2$. It follows that 
$S$ consists of the non-zero elements of $\widehat A$ (each element  having multiplicity $1$),  hence 
$S=S_1\bdot S_2$ where $S_1=\{(b,0),(b,1),(0,1)\}$ and $S_2=\{(a,0),(a,1),(c,0),(c,1)\}$. 
We have that  
\[S_1\bdot S_2^g=\{(b,0),(b,0)\}\bdot \{(b,1),(b,1)\}\bdot \{(c,0),(c,1),(0,1)\}\] 
is the product of $3=1+[G:A]$ zero-sum sequences. 
By Lemma~\ref{lemma:s1s2} we conclude the desired inequality 
$\beta(G)\le 6$.  

On the other hand  
$\beta(G)\ge \beta(C_2\times C_2)+\beta(C_4)-1=3+4-1=6$ by \eqref{also1}.  
\end{proof}

\begin{example}\label{ex:k4_rtimes_c2}
For later use we present a $G$-module on which the 
Noether number of the group $G=(C_2 \times C_2)\rtimes C_4$ is attained. Consider the $G$-module $V=\mathbb{F}^4$ on which the action is given by the matrices 
\[
a= \begin{pmatrix}
1 & 0 & 0 & 0 \\
0 & -1 & 0 & 0 \\
0 & 0 & 1  & 0 \\
0 & 0 & 0 & -1
\end{pmatrix}, \quad 
b=\begin{pmatrix}
-1 & 0  & 0 & 0 \\
0 & 1  & 0 & 0 \\ 
0 & 0 & 1 & 0 \\
0 & 0 & 0 & -1
\end{pmatrix}, \quad
c=\begin{pmatrix}
0 & 1 & 0 & 0  \\
1 & 0 &  0 & 0 \\
0 & 0 & \omega & 0 \\ 
0 & 0 & 0 & \omega
\end{pmatrix} 
\]
where $\omega$ is a primitive fourth  root of unity. 
Then $a,b$ generate a subgroup of $GL(V)$ isomorphic to $C_2\times C_2$, 
$c^4$ is the identity matrix, and conjugation by $c$ interchanges $a$ and $b$. 
So $G$ can be identified with the subgroup of $GL(\mathbb{F}^4)$ generated by $a,b,c$. 
Moreover, the $2\times 2$ upper left blocks of $a,b,c$ give the reflection representation of $\overline G=G/\langle c^2\rangle \cong Dih_8$ on $W=\mathbb{F}^2\subset V$.  Denote by $x,y,z,w$ the standard basis vectors in $V=\mathbb{F}^4$, so 
$W=\mathrm{Span}_{\mathbb{F}}\{x,y\}$, $S(V)=\mathbb{F}[x,y,z,w]$ and $S(W)=\mathbb{F}[x,y]$. 
We claim that the $G$-invariant 
\[xy(x^2-y^2)zw\] 
in $\mathbb{F}[x,y,z,w]^G$ is indecomposable. 
Consider  the characters $\chi,\psi \in\widehat G$ given by 
\begin{align*}
\chi(a)=\chi(b)=\chi(c)=-1, &  & \psi_{1}(a)=\psi_{1}(b)=1,\psi_{1}(c)=\omega,\\
 &  & \psi_{2}(a)=\psi_{2}(b)=-1,\psi_{2}(c)=\omega
\end{align*} 
and $\mathbb{F}_{\chi}$, 
$\mathbb{F}_{\psi_1}$, $\mathbb{F}_{\psi_2}$ the corresponding one-dimensional $G$-modules. 
Clearly we have the $G$-module isomorphisms 
\[\mathbb{F} z\cong \mathbb{F}_{\psi_1}, \quad \mathbb{F} w\cong \mathbb{F}_{\psi_2}, \quad  \mathbb{F} xy(x^2-y^2)\cong \mathbb{F}_{\chi}.\]  
Since $c$ restricts to an order two transformation of $W$, the modules $\mathbb{F}_{\psi_1}$, $\mathbb{F}_{\psi_2}$ and their duals do not occur as a summand in $S(W)$, it follows that $S(V)^G$ contains no element that has degree $1$ in $z$ and degree $0$ in $w$, and $S(V)^G$ contains no element that has degree $1$ in $w$ and degree $0$ in $z$. 
The equalities $\psi_1\psi_2=\chi$ and $\chi^2=1$ show that 
the elements of $S(V)^G$ having degree $1$ both in $z$ and $w$ are exactly the elements of the form $zwh$ where $h\in S(W)$ and $\mathbb{F} h\cong \mathbb{F}_{\chi}$. 
 The $\overline G$-module structure of $S(W)$ and the structure of 
$S(V)^G\cap S(W)= S(W)^{\overline G}$ is well known from the Shephard-Todd-Chevalley Theorem \cite{chevalley}, \cite{shephard-todd}. We infer that $\mathbb{F}_{\chi}$ occurs with multiplicity one in the degree $4$ component of $S(W)$, namely as the subspace spanned by $xy(x^2-y^2)$, and does not occur in lower degrees. 
This clearly implies that  $xy(x^2-y^2)zw$ is indecomposable in $S(V)^G$. 
\end{example}

\section{Observations and open questions}\label{sec:observations}

\subsection{The Noether number and  multiplicity free representations}\label{subsec:multfree}

When $\mathrm{char}(\mathbb{F})=0$, it was shown in \cite{schmid} that as a consequence of Weyl's Theorem on polarizations, $\beta(G)$ is attained on the regular representation of $G$, which contains each irreducible $G$-module with multiplicity equal to its dimension  
(see \cite{knop} for a variant of Weyl's Theorem \cite{weyl} valid in positive non-modular characteristic). 
However for the small groups studied here, the Noether number is usually attained on  some $G$-modules of much smaller dimensions. Recall that a $G$-module $V$ is {\it multiplicity free} if it is the direct sum of pairwise non-isomorphic irreducible $G$-modules.  

To deal with some particular cases below,  
we need first to state explicitly the following corollary of the proof of the inequality \eqref{also1} given in \cite{CzD:2}: 

\begin{lemma}\label{lemma:induced} 
Let $N$ be a normal subgroup of $G$ with $G/N$ abelian. 
For any $N$-module $W$ there exists a multiplicity free $G/N$-module $U$ such that 
\[\beta(S(U\oplus \mathrm{Ind}^G_NW)^G)\ge \beta(S(W)^N)+\mathsf D(G/N)-1.\] 
In particular, if $\beta(S(W)^N)=\beta(N)$, $\beta(G)=\beta(N)+\mathsf D(G/N)-1$, 
$\mathrm{Ind}_N^G(W)$ is multiplicity free and has no summand on which $N$ acts trivially, 
then $\beta(G)$ is attained on a multiplicity free $G$-module. 
\end{lemma}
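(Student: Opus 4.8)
The plan is to unwind the construction behind inequality~\eqref{also1} as proved in \cite[Theorem~4.3]{CzD:2}, keeping track of which $G$-module realizes the lower bound. First I would recall the mechanism: given an $N$-module $W$ with $\beta(S(W)^N)=\beta(N)$, witnessed by an indecomposable invariant $f\in S(W)^N$ of degree $\beta(N)$, one induces up to $G$. The point of the proof of \eqref{also1} is that since $G/N$ is abelian, $S(W)^N$ decomposes into $\widehat{G/N}$-isotypic components under the residual $G/N$-action, so $f$ may be taken to lie in a single isotypic component $\mathbb{F}_\chi$ for some character $\chi$ of $G/N$. Tensoring with a suitable one-dimensional $G/N$-module $\mathbb{F}_{\chi^{-1}\psi}$ and using a product-one free sequence over $G/N$ of maximal length $\mathsf D(G/N)-1$ to build an indecomposable invariant of $S(U)^{G/N}$ of that degree (pulled back to $G$), one multiplies the two indecomposables together; the product stays indecomposable by a degree/weight bookkeeping argument, giving an invariant of degree $\beta(N)+\mathsf D(G/N)-1$. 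The module on which this lives is $U\oplus W$, but $W$ here must be viewed as a $G$-module — and that is exactly where $\mathrm{Ind}_N^G W$ enters: to make $W$ into a $G$-module one induces it, and the indecomposable invariant of $S(W)^N$ survives (up to applying the relative transfer) as an indecomposable element of $S(\mathrm{Ind}_N^G W)^G$ of the same degree. This establishes the displayed inequality with $U$ chosen multiplicity free as a $G/N$-module (a product-one free sequence of maximal length uses distinct characters, or can be arranged to, hence multiplicity free suffices).

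For the ``In particular'' clause I would argue as follows. Assume $\beta(S(W)^N)=\beta(N)$ and $\beta(G)=\beta(N)+\mathsf D(G/N)-1$. Then the displayed inequality gives $\beta(S(U\oplus\mathrm{Ind}_N^G W)^G)\ge\beta(G)$, and by the very definition of the Noether number the reverse inequality holds automatically, so $\beta(G)$ is attained on $V:=U\oplus\mathrm{Ind}_N^G W$. It remains to check that $V$ is multiplicity free. By construction $U$ is a multiplicity free $G/N$-module, hence a multiplicity free $G$-module whose irreducible constituents all have $N$ in their kernel. By hypothesis $\mathrm{Ind}_N^G W$ is multiplicity free and has no summand on which $N$ acts trivially, so no irreducible constituent of $\mathrm{Ind}_N^G W$ is isomorphic to a constituent of $U$ (the former are nontrivial on $N$, the latter trivial on $N$). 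Therefore $V=U\oplus\mathrm{Ind}_N^G W$ is a direct sum of pairwise non-isomorphic irreducibles, i.e.\ multiplicity free, which is the claim.

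I expect the main obstacle to be the first paragraph: faithfully extracting from the proof of \cite[Theorem~4.3]{CzD:2} the statement that the witnessing indecomposable invariant can be kept inside a \emph{single} isotypic component for the $G/N$-action and that, after inducing $W$ to $G$, indecomposability in $S(\mathrm{Ind}_N^G W)^G$ is genuinely preserved rather than merely indecomposability in $S(W)^N$. Concretely one has $S(\mathrm{Ind}_N^G W)\supseteq S(W)$ as the subalgebra on one ``copy'' of $W$, the relative transfer $\tau^G_N$ maps $S(W)^N$ onto the relevant piece of $S(\mathrm{Ind}_N^G W)^G$, and one must verify that $\tau^G_N(f\cdot h)$ with $h$ the pulled-back $G/N$-invariant is not decomposable — this is a weight-sequence argument over the abelian group $G/N$ entirely analogous to Lemma~\ref{lemma:s1s2}, but it has to be read off correctly from the cited source. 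Everything after that — the degree count $\beta(N)+\mathsf D(G/N)-1$ and the multiplicity-free bookkeeping — is routine.
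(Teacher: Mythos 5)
Your proposal follows the same route as the paper: the paper gives no self-contained proof of Lemma~\ref{lemma:induced}, but states it as a corollary read off from the proof of \eqref{also1} in \cite{CzD:2}, recording that the module constructed there has the form $U\oplus \mathrm{Ind}^G_N W$ with $U$ a multiplicity free $G/N$-module; your second paragraph supplies exactly the bookkeeping the paper leaves implicit, namely the chain $\beta(S(U\oplus\mathrm{Ind}^G_N W)^G)\ge \beta(N)+\mathsf D(G/N)-1=\beta(G)\ge \beta(S(U\oplus\mathrm{Ind}^G_N W)^G)$ together with the observation that the constituents of $U$ restrict trivially to $N$ while those of $\mathrm{Ind}^G_N W$ do not, so their direct sum is multiplicity free. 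That part is complete and correct.

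Two slips in your first paragraph should be repaired, though neither is fatal to the strategy. First, $G/N$ does not act on $S(W)^N$, since $W$ is only an $N$-module; the $\widehat{G/N}$-weight decomposition you invoke lives in $S(\mathrm{Ind}^G_N W)^N$ (more generally in $S(V)^N$ for a $G$-module $V$). The worry you flag can be handled there: the $N$-equivariant projection of $\mathrm{Ind}^G_N W$ onto its summand $W$ induces a degree-preserving algebra retraction $S(\mathrm{Ind}^G_N W)^N\to S(W)^N$, so if every $\widehat{G/N}$-weight component of an indecomposable $f\in S(W)^N$ lay in $(S(\mathrm{Ind}^G_N W)^N_+)^2$, then $f$ itself would be decomposable in $S(W)^N$; hence some weight component of the required degree is indecomposable, and the product with the variables of $U$ is then tested for indecomposability by a multidegree argument of the kind you describe. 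Second, your justification that $U$ is multiplicity free --- ``a product-one free sequence of maximal length uses distinct characters, or can be arranged to'' --- is false: over a cyclic group $\widehat{G/N}$ the extremal zero-sum free sequence is a single character repeated. Multiplicity-freeness of $U$ is nevertheless automatic, because $U$ only needs one copy of $\mathbb{F}_\chi$ for each character in the \emph{support} of the sequence, repetitions being realized by powers of the corresponding variable, exactly as in the abelian case quoted in part (1) of the proof of Theorem~\ref{thm:multfree}.
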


\begin{theorem}\label{thm:multfree} 
There exists a multiplicity free $G$-module $V$ such that 
$\beta(G)=\beta(S(V)^G)$ in each of the following cases:  
\begin{enumerate}
\item $G$ is abelian; 
\item $G$ has a cyclic subgroup of index two; 
\item  $G$ has order less than $32$. 
\end{enumerate}
\end{theorem}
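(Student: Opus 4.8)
The plan is to handle the three cases in turn, reducing (1) and (2) to known structural results and then dispatching (3) by a systematic case analysis over the table in Section~\ref{sec:table}.

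\textbf{Cases (1) and (2).} For (1), if $G$ is abelian then $\beta(G)=\mathsf D(G)$ by \eqref{eq:beta=D}, and the Davenport constant is already realized by a zero-sum sequence over $G$ in which, after discarding repetitions that can be absorbed, one may take a squarefree sequence supported on distinct characters; translating back, the corresponding direct sum of pairwise non-isomorphic one-dimensional $G$-modules is multiplicity free and attains $\beta(G)$. (Concretely, for $G=C_{p^{n_1}}\times\cdots\times C_{p^{n_r}}$ the extremal product-one-free sequence consists of $p^{n_i}-1$ copies of each of $r$ generators; but one can instead use the module $\bigoplus_\chi \mathbb F_\chi$ over a set of characters generating $\widehat G$ with the right orders, which is multiplicity free — this needs a short argument that $\beta$ of that module already reaches $\mathsf D(G)$, which follows from the standard computation of $\mathsf D$ for $p$-groups and rank-two groups recalled in Section~\ref{subsec:abelian}.) For (2), a non-cyclic group $G$ with a cyclic subgroup $C$ of index two has $\beta(G)$ given by \eqref{indextwo}; one checks that in each family ($Dih_{2m}$, $Dic_{4m}$, $SD_{2^k}$, $M_{2^k}$, and $C\times C_2$-type cases) the value $\tfrac12|G|+1$ or $\tfrac12|G|+2$ is attained on an explicit small module, namely $C$ embedded as a one-dimensional (faithful) $C$-module induced up, or a sum of a two-dimensional and a one-dimensional module as in Example~\ref{ex:c3dih8}; the induced module $\mathrm{Ind}_C^G(\text{faithful 1-dim})$ is irreducible of dimension two, hence automatically multiplicity free, and for the dicyclic extra "$+2$" one appends a suitable one-dimensional module. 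Here Lemma~\ref{lemma:induced} is the right tool: take $N=C$, $W$ a faithful one-dimensional $C$-module with $\beta(S(W)^C)=|C|=\beta(C)$, note $G/C\cong C_2$ so $\mathsf D(G/C)=2$, and verify $\mathrm{Ind}_C^G W$ is multiplicity free with no trivial summand.

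\textbf{Case (3).} The bulk of the proof is a finite verification across the $45$ non-abelian groups of order $<32$ (the abelian ones being covered by (1)). Many of them have a cyclic subgroup of index two and are covered by (2) — according to Subsection~\ref{subsec:indextwo} this is $27$ of the groups in the table. For the generalized dihedral groups $Dih(A)$ of Subsection~\ref{subsec:generalized dihedral}, one applies Lemma~\ref{lemma:induced} with $N=A$, $G/N\cong C_2$: pick $W$ a one-dimensional $A$-module realizing $\beta(S(W)^A)=\mathsf D(A)=\beta(A)$ and faithful enough that $\mathrm{Ind}_A^GW$ is multiplicity free with no $A$-trivial summand; since $\beta(Dih(A))=\mathsf D(A)+1=\beta(A)+\mathsf D(C_2)-1$ by \eqref{gendih}, the lemma gives a multiplicity-free realizing module. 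This leaves the sporadic groups handled individually in Section~\ref{sec:noether}: $A_4$, $SL_2(\mathbb F_3)$, $S_4$, $H_{27}$, $A_4\times C_2$, $(C_2\times C_2)\rtimes C_4$, $C_4\rtimes C_4$, $Q_8\times C_2$, $S_3\times C_3$, $Dic_{12}\times C_2$, $C_3\rtimes Dih_8$, the Pauli group, $M_{27}$, $C_5\rtimes C_4$, $C_7\rtimes C_3$, and the product-type groups $Dih_{2m}\times C_k$. For several of these (e.g. $A_4\times C_2$, $(C_2\times C_2)\rtimes C_4$, $C_3\rtimes Dih_8$) the Examples~\ref{ex:a4xc2}, \ref{ex:k4_rtimes_c2}, \ref{ex:c3dih8} already exhibit explicit modules, and one only needs to observe these modules are multiplicity free (check that the irreducible constituents listed there are pairwise non-isomorphic). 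For the remaining ones one either invokes Lemma~\ref{lemma:induced} again (when $\beta(G)=\beta(N)+\mathsf D(G/N)-1$ with $G/N$ abelian and the inducing data chosen cleanly), or cites the module constructed in the reference for $\beta$ in the table (e.g. \cite{CzDG} for $S_4$, \cite{Cz2} for $H_{27}$, \cite{schmid}/\cite{CzD:1} for $A_4$ and $SL_2(\mathbb F_3)$) and checks multiplicity-freeness there.

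\textbf{Main obstacle.} The genuinely delicate part is not any single group but rather the bookkeeping in Case (3): for each sporadic group one must either produce a multiplicity-free module or certify that a previously constructed realizing module can be taken multiplicity free, and the cases where $\beta(G)$ is \emph{not} of the form $\beta(N)+\mathsf D(G/N)-1$ with a convenient abelian quotient — most notably $SL_2(\mathbb F_3)$ (where $\beta=12$ is far above $\mathsf d=7$) and $S_4$ and $H_{27}$ — require going back to the explicit indecomposable invariant used to prove the lower bound and reading off its multidegree support to confirm only pairwise-distinct irreducibles are involved. I would expect the write-up to mostly consist of: (i) the Lemma~\ref{lemma:induced} argument applied uniformly to the index-two and generalized-dihedral and "$\beta(N)+\mathsf D(G/N)-1$" cases; (ii) pointers to Examples~\ref{ex:c3dih8}, \ref{ex:a4xc2}, \ref{ex:k4_rtimes_c2} with a one-line multiplicity-free check each; and (iii) a short paragraph each for $SL_2(\mathbb F_3)$, $S_4$, $H_{27}$ citing the relevant constructions and noting the realizing module is multiplicity free.
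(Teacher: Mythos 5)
Your overall strategy --- Lemma~\ref{lemma:induced} for the index-two, generalized dihedral and ``$\beta(N)+\mathsf D(G/N)-1$'' cases, the three Examples for $C_3\rtimes Dih_8$, $A_4\times C_2$, $(C_2\times C_2)\rtimes C_4$, and citations to the earlier constructions for $A_4$, $\tilde A_4$, $S_4$, $H_{27}$ and the Pauli group --- is exactly the paper's, and your case (1) is also the paper's argument (the ``short argument'' you defer is precisely \cite[Proposition 4.7]{CzDG}, which the proof quotes). However, two steps have genuine gaps. First, the dicyclic groups in case (2): Lemma~\ref{lemma:induced} with $N$ the cyclic index-two subgroup can only produce the lower bound $\beta(N)+\mathsf D(C_2)-1=\tfrac12|G|+1$, which falls short of $\beta(Dic_{4m})=\tfrac12|G|+2$, and your proposal to ``append a suitable one-dimensional module'' is unsupported --- nothing you have set up yields the extra $+1$, and adding a summand to the induced module cannot raise the bound coming from that lemma. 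The paper closes this case with an explicit prior computation, \cite[Proposition~9.1]{CzD:2}, showing that the $2$-dimensional irreducible $Dic_{4m}$-module $V$ already satisfies $\beta(S(V)^{Dic_{4m}})=2m+2$; some such explicit input is indispensable, and it is reused in case (3) for $Dic_{12}\times C_2$ (induce that module up and check the two irreducible constituents are non-isomorphic).

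Second, the generalized dihedral groups $G=A\rtimes_{-1}C_2$ with $A$ non-cyclic (the three relevant groups of order less than $32$): you propose to ``pick $W$ a one-dimensional $A$-module realizing $\beta(S(W)^A)=\mathsf D(A)$'', but a single character $\chi$ gives $\beta(S(\mathbb{F}_\chi)^A)=\mathrm{ord}(\chi)\le \exp(A)<\mathsf D(A)$ whenever $A$ is non-cyclic, so no such $W$ exists. One must take $W=\mathbb{F}_{\chi_1}\oplus\cdots\oplus\mathbb{F}_{\chi_r}$ with $\{\chi_1,\dots,\chi_r\}$ a \emph{minimal} subset of $\widehat A$ supporting an atom of length $\mathsf D(\widehat A)$, and then multiplicity-freeness of $\mathrm{Ind}_A^GW$ is not automatic: $\mathrm{Ind}_A^G\mathbb{F}_{\chi}\cong\mathrm{Ind}_A^G\mathbb{F}_{\chi^{-1}}$, so if both $\chi\neq\chi^{-1}$ occurred in the support the induced module would have a repeated irreducible summand. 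The paper's proof uses minimality to exclude this and treats the $\chi=\chi^{-1}$ case separately (there the induced module splits into two non-isomorphic one-dimensional modules); your ``faithful enough'' condition does not address this point, which is the only real content of that case. With these two repairs your outline coincides with the paper's proof.
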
  

\begin{proof}  
It is sufficient to prove our claim in the case when $\mathbb{F}$ is algebraically closed, so let us assume this. 

1. The case when $G$ is abelian is known. 
Denote by $\widehat G$ the group of characters ($G\to \mathbb{F}^\times$ homomorphisms) of $G$, and for $\chi\in \widehat G$ let $\mathbb{F}_{\chi}$ be the $1$-dimensional $G$-module on which $G$ acts via $\chi$. It is well known 
(see for example  Proposition 4.7 in \cite{CzDG}) that if there exists an irreducible zero-sum sequence over $\widehat G$ of length $\mathsf D(\widehat G)$ with components in $\{\chi_1,\dots,\chi_k\}\subseteq \widehat G$, then
$\beta(G)=\beta(S(\bigoplus_{i=1}^k\mathbb{F}_{\chi_i})^G)$. 

2. For the dicyclic groups $G=Dic_{4m}$ (where $m>1$) we gave an example in the proof of Proposition 9.1 in \cite{CzD:2}  of a $2$-dimensional irreducible $G$-module $V$ 
with $\beta(S(V)^{{Dic}_{4m}})=2m+2$. 

Let $G$ be a non-abelian group with a cyclic subgroup $N$ of index two, and let $\chi$ be a generator of $\widehat N$. Then $\mathrm{Ind}^G_N\mathbb{F}_{\chi}$ is a $2$-dimensional irreducible $G$-module, and so the module $U\oplus \mathrm{Ind}^G_N\mathbb{F}_{\chi}$ 
from Lemma~\ref{lemma:induced} is multiplicity free (being the direct sum of a $2$-dimensional irreducible and the non-trivial $1$-dimensional module). 
Moreover, we have $\beta(S(U\oplus \mathrm{Ind}^G_N\mathbb{F}_{\chi})^G)\ge\beta(N)+1=\frac 12|G|+1$, and $\frac 12|G|+1=\beta(G)$ unless $G$ is dicyclic. 

3. Similar argument works for $C_7\rtimes C_3$, $C_5\rtimes C_4$, and $M_{27}$: 
let $N$ be a maximal cyclic normal subgroup of the  given group, and $\chi$ a generator of $\widehat N$. Then $\mathrm{Ind}_N^G\mathbb{F}_{\chi}$ is an irreducible $G$-module of dimension $|G/N|$, and we are done by Lemma~\ref{lemma:induced}, taking into account the known value of $\beta(G)$ from Section~\ref{sec:table}. 

Now suppose that $G=A\rtimes_{-1} C_2$ is a generalized dihedral group with $A$ being a non-trivial abelian group. Take a minimal subset $\Lambda=\{\chi_1,\dots,\chi_r\}\subset \widehat A$ such that $\mathcal{B}(\widehat A)$ contains an atom of length $\mathsf D(\widehat A)$, all of whose components belong to $\Lambda$. 
By minimality of $\Lambda$, it does not contain the trivial character, and  if $\chi$ and $\chi^{-1}$ both belong to $\Lambda$, then $\chi=\chi^{-1}$. 
If $\chi\neq \chi^{-1}$, then $\mathrm{Ind}^G_A\mathbb{F}_{\chi}$ is irreducible and as an $A$-module is isomorphic to $\mathbb{F}_{\chi}\oplus\mathbb{F}_{\chi^{-1}}$, whereas if 
$\chi=\chi^{-1}$ is non-trivial, then $\mathrm{Ind}^G_A\mathbb{F}_{\chi}$ is the direct sum of two non-isomorphic $1$-dimensional $G$-modules, which as $A$-modules are  isomorphic to 
$\mathbb{F}_{\chi}$.  It follows that for 
$W=\mathbb{F}_{\chi_1}\oplus\cdots \oplus \mathbb{F}_{\chi_r}$ the $G$-module 
$\mathrm{Ind}^G_AW$ is multiplicity free and contains no summands on which $A$ acts trivially. Thus we are done by Lemma~\ref{lemma:induced}. 

The groups $Q_8\times C_2$, $S_3\times C_3$, $C_4\rtimes C_4$,  can also be settled by 
Lemma~\ref{lemma:induced}, taking into account the known value of $\beta(G)$ from Section~\ref{sec:table}.  Indeed, 
for the $2$-dimensional irreducible $Q_8$-module $W$ we have $\beta(S(W))^{Q_8}=6$ 
(see \cite{schmid}) and $\mathrm{Ind}_{Q_8}^{Q_8\times C_2}$ is the direct sum of the two 
non-isomorphic irreducible two-dimensional $Q_8\times C_2$-modules. 
Essentially the same argument can be used for $Dic_{12}\times C_2$: after inducing up to $Dic_{12}\times C_2$ the 
irreducible two-dimensional $Dic_{12}$-module on which the Noether number is attained we get a direct sum of two non-isomorphic irreducible $Dic_{12}\times C_2$-modules. 
Note that $S_3\times C_3\cong C_3\rtimes_{-1} C_6$ and for a non-trivial $\chi\in \widehat C_3$ we have that $\mathrm{Ind}_{C_3}^{C_3\rtimes C_6}\mathbb{F}_{\chi}$ is the direct sum of the three  
pairwise non-isomorphic irreducible $2$-dimensional $S_3\times C_3$-modules. For a generator 
$\chi\in\widehat C_4$, we have that $\mathrm{Ind}_{C_4}^{C_4\rtimes C_4} \mathbb{F}_{\chi}$  
is the direct sum of two non-isomorphic irreducible two-dimensional modules. 

The cases  of the groups $C_3 \rtimes Dih_8$, $A_4 \times C_2$ and $(C_2\times C_2)\rtimes_{-1}C_4$ were settled in Example~\ref{ex:c3dih8}, Example~\ref{ex:a4xc2} and Example~\ref{ex:k4_rtimes_c2}. 

An irreducible module on which the Noether number is attained is given already in the literature for  $A_4$, $\widetilde A_4$ in \cite{CzD:1}, for $H_{27}$ in \cite{Cz2}.  
It was pointed out in  \cite[Example 5.3 and 5.4]{CzDG} that 
the Noether number for $S_4$ is attained on the product of the standard four-dimensional permutation representation  and the sign representation,  and for 
the Pauli group $(C_4\times C_2)\rtimes_{\phi} C_2$ on the direct sum of the two-dimensional pseudo-reflection representation and a one-dimensional representation 
(see also \cite{CzD:3} for some details referred to in \cite{CzDG}).   
\end{proof}

\begin{problem}\label{problem:2} 
Does there exist a group $G$ for which $\beta(S(V)^G)<\beta(G)$ for all multiplicity free $G$-modules $V$? 
\end{problem} 

\begin{remark} 

(i)  By a theorem of Draisma, Kemper and Wehlau \cite{draisma-kemper-wehlau} the universal degree bound for separating invariants is known to be attained on multiplicity free representations. 

(ii) We mention a conjecture of Hunziker \cite[Conjecture~5.1]{hunziker} made for 
reflection groups that has a similar flavor as the  topic of Section~\ref{subsec:multfree}. 
\end{remark}

\subsection{The strict monotonicity of the Noether number}

Since all $G/N$-modules can be viewed as $G$-modules, the inequality 
$\beta(G/N)\le \beta(G)$ holds for any normal subgroup $N$ of any finite group $G$. It was proven by B. Schmid \cite{schmid} that $\beta(H)\le \beta(G)$ for any subgroup $H$ of $G$.  
We shall refer as {\bf S} and {\bf F} for the following conditions on a finite group $G$: 
\begin{itemize}
\item[{\bf S}:] $\beta(H)<\beta(G)$ for each proper subgroup $H$ of $G$. 
\item[{\bf F}:]  $\beta(G/N)<\beta(G)$ for each non-trivial normal subgroup $N$ of $G$.  
\end{itemize} 
It is shown in a subsequent paper \cite{CzD:4} that  conditions {\bf S} and  {\bf F} 
(by generalizing \eqref{also1} for non-abelian $N$) 
hold for all finite groups.  
We collect  in  Theorem~\ref{thm:monotone} below 
facts on  the properties {\bf S} and  {\bf F} that can be read off from the results obtained or quoted  in the present paper.  

\begin{theorem} \label{thm:monotone} 
Condition {\bf S} holds for any finite nilpotent group $G$. 
Moreover, both {\bf S} and {\bf F} hold when 
\begin{enumerate} 
\item $G$ is abelian; 
\item $G$ has a cyclic subgroup of index two; 
\item $G\cong C_p\rtimes C_q$ for odd primes $p,q$ where $q\mid p-1$;  
\item $G$ has order less than $32$. 
\end{enumerate}
\end{theorem}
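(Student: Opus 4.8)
The plan is to prove Theorem~\ref{thm:monotone} by treating the nilpotent case and the four enumerated cases in turn, in each case feeding the already-established data and inequalities into conditions \textbf{S} and \textbf{F}. The underlying philosophy is uniform: the inequality \eqref{also1} gives \emph{lower} bounds $\beta(G)\ge\beta(G/N)+\beta(N)-1$ whenever $G/N$ is abelian, and since $\beta(N)\ge 2$ for any nontrivial $N$ (any nontrivial group contains $C_p$, and $\beta(C_p)=p\ge 2$) and $\beta(G/N)\ge 2$ for any nontrivial factor, one gets strict inequalities for free in the many situations where the relevant quotient or subgroup is abelian. The work is in handling the cases where this does not immediately apply.

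First I would dispatch the nilpotent case for \textbf{S}: a finite nilpotent group is the direct product of its Sylow subgroups, and more to the point every proper subgroup $H$ of a nilpotent group $G$ is properly contained in its normalizer, hence $H$ lies in a maximal subgroup $M$ which is normal of prime index, so $G/M\cong C_p$ is abelian; then \eqref{also1} gives $\beta(G)\ge\beta(M)+\beta(C_p)-1\ge\beta(M)+1>\beta(M)\ge\beta(H)$ using Schmid's monotonicity $\beta(H)\le\beta(M)$. This settles the first sentence. For case (1), abelian $G$: here $\beta(G)=\mathsf{D}(G)$ and both \textbf{S} and \textbf{F} follow from strict monotonicity of the Davenport constant of abelian groups under subgroups and quotients, which is classical (a product-one free sequence over a subgroup or a lift of one from a quotient can always be lengthened). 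For case (3), $G\cong C_p\rtimes C_q$ with $q\mid p-1$: the only nontrivial proper normal subgroup is $C_p$, with $G/C_p\cong C_q$ abelian, so \textbf{F} follows from $\beta(G)\ge\beta(C_q)+\beta(C_p)-1=q+p-1>p-1\ge\beta(C_q),\ \beta(C_p)=p$ (one checks $q+p-1>p$ since $q\ge 3$); and the proper subgroups of $G$ are cyclic of order dividing $p$ or $q$, all with Noether number at most $\max(p,q)=p<p+q-1\le\beta(G)$, giving \textbf{S}.

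For case (2), groups with a cyclic subgroup of index two, I would invoke formula \eqref{indextwo}: $\beta(G)=\frac12|G|+1$ or $\frac12|G|+2$. Proper subgroups $H$ have $|H|\le\frac12|G|$, and either $H$ is abelian with a cyclic subgroup of index at most two (so $\beta(H)=\mathsf{D}(H)\le|H|\le\frac12|G|<\beta(G)$) or $H$ again has a cyclic subgroup of index two and $\beta(H)=\frac12|H|+O(1)\le\frac14|G|+2<\beta(G)$ for $|G|$ not too small — the finitely many small exceptions being checked directly against the table. For \textbf{F}: a nontrivial normal subgroup $N$ gives $|G/N|\le\frac12|G|$, and $G/N$ is either abelian or still has a cyclic subgroup of index two, so the same size estimate applies. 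Case (4), order less than $32$: this is pure bookkeeping against the table in Section~\ref{sec:table} together with the Noether numbers of abelian groups from Section~\ref{subsec:abelian} — for each of the $45$ non-abelian groups one lists its proper subgroups and proper quotients (readable from the classification, e.g.\ via GAP) and verifies the two strict inequalities. Cases (1)--(3) already cover most occurring subquotients, so only a bounded amount of remaining checking is needed.

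The main obstacle is case (4): although conceptually routine, it requires knowing the full subgroup and normal-subgroup lattice of each of the $45$ groups and the Noether number of every subquotient, and a few groups (notably $S_4$, $SL_2(\mathbb{F}_3)$, $A_4\times C_2$, $C_3\rtimes Dih_8$, and the order-$16$ groups) have subquotients whose Noether numbers are only pinned down by the more delicate arguments of Section~\ref{sec:noether} rather than by the clean formulas \eqref{indextwo} or \eqref{gendih}; one must be careful that e.g.\ a subgroup like $A_4<S_4$ or $A_4<SL_2(\mathbb{F}_3)$ or a quotient like $S_4\to S_3$ genuinely has strictly smaller Noether number. A secondary subtlety in the nilpotent case is making sure the argument is independent of the classification for groups of arbitrary order, which the normalizer-growth property supplies cleanly. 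I expect the write-up to consist mostly of the nilpotent argument plus the size estimates for case (2), with cases (1), (3) dispatched in a sentence each and case (4) reduced to ``inspect the table,'' pointing out the handful of less-obvious subquotient comparisons explicitly.
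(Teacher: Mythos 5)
Your treatment of the nilpotent claim and of cases (1)--(3) is correct and essentially the paper's argument: normalizer growth to place a proper subgroup inside a (normal, prime-index) maximal subgroup and then \eqref{also1}, the size estimates coming from \eqref{indextwo} for groups with a cyclic index-two subgroup, and the comparison of $p,q$ with $\beta(G)\ge p+q-1$ for $C_p\rtimes C_q$. (For abelian $G$ the paper gets \textbf{F} from \textbf{S} via the fact that every quotient of an abelian group is isomorphic to a subgroup, while you invoke the classical strict monotonicity of the Davenport constant; both are fine, and in case (2) your ``finitely many small exceptions'' are in fact unnecessary, since the estimate $\frac14|G|+2<\frac12|G|+1$ only fails for $|G|\le 4$.)

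Case (4) is where your proposal has a genuine gap: you reduce it to an unexecuted verification (``list all proper subgroups and quotients of the $45$ groups and check the two inequalities''), which is precisely the part of the statement that needs an argument, and the comparisons you single out as delicate ($A_4<S_4$, $A_4<\tilde A_4$, $S_4\to S_3$) have ample slack ($6<9$, $6<12$, $4<9$), so they are not where the tension lies. The paper's key idea, absent from your plan, is the uniform bound of \cite[Theorem 1.1]{CzD:1}: $\beta(H)<\frac12|H|$ unless $H$ has a cyclic subgroup of index at most two or $H\in\{C_2^3,\,C_3\times C_3,\,A_4,\,\tilde A_4\}$, whence $\beta(H)\le 2+\frac12|H|$ for every non-cyclic $H$, with equality only for dicyclic $H$. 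Since in case (4) one may assume $G$ has no cyclic subgroup of index two (those fall under case (2)), every proper subgroup or factor group $H$ satisfies $\beta(H)\le\max\{2+\frac14|G|,\frac13|G|\}$, with equality only if $H$ is dicyclic of order $\frac12|G|$ or cyclic of order $\frac13|G|$. This settles at once every $G$ whose Noether number exceeds that bound and leaves exactly six groups -- the two of order $16$ with $\beta=6$, the group $(C_3\times C_3)\rtimes_{-1}C_2$ with $\beta=6$, the two of order $24$ with $\beta=8$, and $H_{27}$ with $\beta=9$ -- for which the extremal subquotients are excluded by element-order arguments ($Q_8$ is not a subquotient of $Dih_8\times C_2$ or of $(C_2\times C_2)\rtimes C_4$; $(C_3\times C_3)\rtimes_{-1}C_2$ has no element of order $6$; $A_4\times C_2$ and $Dih_{12}\times C_2$ have no element of order $4$, hence no $Dic_{12}$ or $C_8$ subquotient; $H_{27}$ has no element of order $9$). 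These near-equalities, where a subquotient's Noether number could a priori equal $\beta(G)$, are exactly the checks your write-up would have to supply; without them (or the uniform bound that isolates them) case (4) remains an assertion rather than a proof.
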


\begin{proof}  
Suppose first that $G$ is nilpotent, and let $H$ be a proper subgroup of $G$. It is well known that  $H$ is contained as a prime index normal subgroup in a subgroup $K$ of $G$, whence we have 
$\beta(G)\ge \beta(K)\ge \beta(H)+[K:H]-1>\beta(H)$  by \eqref{also1}. 
So {\bf S} holds for $G$. 
Note that if $G$ is abelian, then any factor group of $G$ is isomorphic to a subgroup of $G$, 
hence {\bf F} follows from {\bf S}. 
If $G$ has a cyclic subgroup of index two, then any subgroup or factor group of $G$ has a subgroup of index at most two as well, whence \eqref{indextwo} shows that both {\bf S} and {\bf F} hold. Any non-trivial subgroup or factor group of $C_p\rtimes C_q$ has order $p$ or $q$, and $\beta(C_p\rtimes C_q)\ge p+q-1$ by \eqref{also1}. 

Assume finally that $G$ is a non-abelian group of order less than $32$ that contains  no cyclic subgroup of index two. 
Theorem 1.1 in \cite{CzD:1} asserts that $\beta(H)<\frac 12 |H|$ unless $H$ has a cyclic subgroup of index at most two, or $H$ is isomorphic to one of $C_2\times C_2\times C_2$, $C_3\times C_3$, $A_4$, or $\tilde{A}_4$. Taking into account \eqref{indextwo} 
and the values of the Noether numbers of $C_2\times C_2\times C_2$, $C_3\times C_3$, $A_4$ and $\tilde{A}_4$, this implies that 
$\beta(H)\le 2+\frac 12|H|$ for any non-cyclic $H$, with equality only if $H\cong Dic_{4m}$ is a dicyclic group. Since our $G$  has no cyclic subgroups or factor groups  of order at least $\frac 12 |G|$, we conclude that for any proper subgroup or factor group $H$ of $G$  the inequality $\beta(H)\le \max\{2+\frac 14|G|,\frac 13 |G|\}$ holds, with strict inequality unless $H\cong Dic_{4m}$ is a dicyclic group of order $\frac{1}{2}|G|$ or $H$ is a cyclic group of order $\frac{1}{3}|G|$.  
This immediately implies that {\bf S} and {\bf F} hold for $G$ provided that 
$\beta(G)>\max\{2+\frac 14|G|,\frac 13 |G|\}$. 
From now on assume in addition that $\beta(G)\le \max\{2+\frac 14|G|,\frac 13 |G|\}$. 
So $G$ is one of the following groups from the table in Section~\ref{sec:table}: 
the two groups of order $16$ with Noether number $6$, the group of order $18$ with Noether number $6$, the two groups of order $24$ with Noether number $8$, or the group of order $27$ with Noether number $9$.  
Now $Dih_8\times C_2$ has exactly four elements of order $4$ and no element of order $8$, consequently  does not have $Q_8=Dic_8$ as a subgroup or a factor group, hence 
{\bf S} and {\bf F} hold for this group.  
The group  $(C_2\times C_2)\rtimes C_4$ has  $C_2\times C_2\times C_2$ as a subgroup.  Therefore any order $8$ subgroup  
or factor group of  this group contains $C_2\times C_2$ as a subgroup, and hence it is not dicyclic.  
The group $(C_3\times C_3)\rtimes_{-1}C_2$ has no element of order $6$ and has no dicyclic subgroups or factor groups (as its order is not divisible by $4$). The groups $A_4\times C_2$ and $Dih_{12}\times C_2$ have no element of order $8$ and do not have a subgroup or factor group isomorphic to $Dic_{12}$ (as these 
groups do not have an element of order $4$). Finally, the Heisenberg group does not have an element of order $9$. So {\bf S} and {\bf F} hold for all groups of order less than $32$. 
\end{proof}

\subsection{Dependence on the characteristic}  \label{subsec:char}

It is proved in \cite[Corollary 4.2]{knop} that $\beta^{\mathbb{F}}(G)$ may depend only on the 
characteristic of $\mathbb{F}$, but not on $\mathbb{F}$. 
Therefore we introduce the notation  
\[\beta^{\mathrm{char}(\mathbb{F})}(G)=\beta^{\mathbb{F}}(G).\] 
Moreover, by \cite[Theorem 4.7]{knop} we have $\beta^p(G)\ge \beta^0(G)$ for all primes $p$, and $\beta^p(G)= \beta^0(G)$ holds for all but finitely many primes $p$.  
Knop remarks in \cite{knop} that ``Presently, no group $G$ and prime $p$ not dividing $|G|$ with $\beta^p(G)>\beta^0(G)$ seems to be known''.  This observation  inspires the following question: 

\begin{problem}\label{problem:3}  
Does the equality  $\beta^p(G)=\beta^0(G)$ 
hold for all finite groups $G$ 
and  primes $p$ not dividing $|G|$? 
\end{problem}   

The paper \cite{wehlau} reports as a folklore conjecture that for any permutation $\mathbb{Z}G$-module $V$ (i.e. when $V$ is a free $\mathbb{Z}$-module  with a basis preserved by the action of $G$)  we have 
$\beta(S(\mathbb{F}\otimes_{\mathbb{Z}} V)^G)=\beta(S(\mathbb{Q}\otimes_{\mathbb{Z}}V)^G)$ provided that $\mathrm{char}(\mathbb{F})$ does not divide  $|G|$.  
We note that if this conjecture  is true, then Problem~\ref{problem:3} has a positive answer.  
Indeed, it follows from 
\cite[Theorem 6.1]{knop} and \eqref{eq:noether} that 
if $p$ does not divide $|G|$ then  $\beta^p(G)=\beta(S(\mathbb{F}\otimes _{\mathbb{Z}}V)^G)$ where $\mathbb{F}$ is a field  of characteristic  $p$ and 
$V$ is the direct sum of $|G|$ copies of the regular $G$-module defined over $\mathbb{Z}$ (hence in particular $V$ is a permutation $\mathbb{Z}G$-module).  

In Theorem~\ref{thm:char-indep} below we collect the cases for which Problem~\ref{problem:3} has a positive answer by the results obtained or quoted in the present paper. 

\begin{theorem} \label{thm:char-indep} 
The equality $\beta^p(G)=\beta(G)$ holds for all primes $p$ not dividing the order of $G$ in each of the following cases: 
\begin{enumerate}
\item $G$ is abelian; 
\item $G$ has a cyclic subgroup of index two; 
\item $G$ has order less than $32$.   
\end{enumerate}
\end{theorem}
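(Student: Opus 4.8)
As usual it is enough to treat an algebraically closed base field, and in this proof I write $\beta^0(G)$ for the characteristic-zero value of the Noether number, which is the number $\beta(G)$ recorded in the table of Section~\ref{sec:table}. By \cite[Theorem~4.7]{knop} one has $\beta^p(G)\ge\beta^0(G)$ for every prime $p$; hence the plan is to prove the reverse inequality $\beta^p(G)\le\beta^0(G)$ for $p\nmid|G|$, and for this it suffices to check that every \emph{upper} bound on the Noether number used in Section~\ref{sec:noether}, and in the references quoted there, is valid over a field of arbitrary characteristic not dividing $|G|$ --- in other words, that the standing hypothesis $\mathrm{char}(\mathbb{F})\nmid|G|$ under which Section~\ref{sec:noether} is written is the only hypothesis actually used.

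First I would dispose of (1): over an algebraically closed $\mathbb{F}$ with $\mathrm{char}(\mathbb{F})\nmid|G|$ we have $\widehat G\cong G$, and the usual proof of $\beta^{\mathbb{F}}(G)=\mathsf{D}(G)$, and more generally of $\beta_k^{\mathbb{F}}(G)=\mathsf{D}_k(G)$, for abelian $G$ uses nothing beyond the decomposition of $S(V)$ into $G$-weight spaces; since $\mathsf{D}_k(G)$ is a purely combinatorial invariant of $G$, this settles (1) and also shows that \eqref{halterkoch} holds in every non-modular characteristic. Next I would record that the rest of the toolkit is characteristic-free: \eqref{red}, \eqref{also1} and \eqref{gendih} are proved in \cite{CzD:1,CzD:2} assuming only $\mathrm{char}(\mathbb{F})\nmid|G|$, and the proof of Lemma~\ref{lemma:s1s2} given above rests only on the surjectivity of the relative transfer $\tau^G_A$ (which holds precisely because $[G:A]$ is invertible in $\mathbb{F}$) together with combinatorics of zero-sum sequences over $\widehat A$, neither of which sees $p$. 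Case (2) is then immediate: the upper bound in \eqref{indextwo} is obtained in \cite[Theorem~10.3]{CzD:2} using exactly these char-free tools (and for dihedral groups is anyway established in non-modular positive characteristic in \cite{sezer}), while the matching lower bound is supplied char-freely by \eqref{also1} --- except for $Dic_{4m}$, where $\beta^p(Dic_{4m})\ge\beta^0(Dic_{4m})=2m+2$ follows from Knop's inequality (or from the explicit $2$-dimensional module of \cite[Proposition~9.1]{CzD:2}, whose indecomposability computation does not involve the characteristic).

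For (3) I would run through the table of Section~\ref{sec:table}. The $27$ groups with a cyclic subgroup of index two are covered by (2); the three generalized dihedral groups that are not themselves dihedral are covered by \eqref{gendih}; and Propositions~\ref{prop:C4C4}, \ref{prop:Q8C2}, \ref{prop:S3C3}, \ref{prop:Dic12C2}, \ref{prop:C3Dih8}, \ref{prop:a4xc2} and \ref{prop:k4_rtimes_c2} derive their upper bounds using only \eqref{red}, \eqref{also1}, \eqref{halterkoch}, \eqref{indextwo} and Lemma~\ref{lemma:s1s2}, and their lower bounds from \eqref{also1} (or, again, from Knop's inequality and the tabulated value), so these arguments go through verbatim when $\mathrm{char}(\mathbb{F})=p$. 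The remaining eight groups are $A_4$, $C_5\rtimes C_4$, $C_7\rtimes C_3$ and $\tilde A_4$ (computed in \cite{CzD:1}), $M_{27}$ (in \cite{CzD:2}), $H_{27}$ (in \cite{Cz2}), and $S_4$ and the Pauli group (in \cite{CzDG}); for each of these I would re-read the cited computation and confirm that its only representation-theoretic inputs --- Clifford theory over an algebraically closed field, and the Shephard--Todd--Chevalley theorem \cite{chevalley,shephard-todd} for the handful of reflection representations that occur --- remain valid for $\mathrm{char}(\mathbb{F})\nmid|G|$, which they do. Assembling these pieces yields $\beta^p(G)=\beta^0(G)$ for every group of order less than $32$ and every $p\nmid|G|$.

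The only place that needs genuine care is this last audit of the sporadic cases and of the explicit lower-bound constructions (Examples~\ref{ex:c3dih8}, \ref{ex:a4xc2}, \ref{ex:k4_rtimes_c2} and their analogues in the references): one must make sure that no scalar which is forced to be a unit over $\mathbb{Q}$ --- a compared polynomial coefficient, or a ``multiplicity one'' statement about a one-dimensional summand of some $S(W)$ --- can vanish modulo a prime $p$ not dividing $|G|$. In every instance the relevant scalars lie in $\mathbb{Z}[\zeta_{|G|},1/|G|]$ and the loci to be avoided are cut out by divisors of powers of $|G|$, so no bad prime occurs; carrying this out group by group is routine, but it is the crux of the matter.
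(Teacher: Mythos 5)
Your proposal is correct and follows essentially the same route as the paper: the paper's own proof simply notes that the abelian case is classical, that formula \eqref{indextwo} is valid in every non-modular characteristic, and that all determinations of $\beta$ in Section~\ref{sec:noether} and the quoted references are carried out over an arbitrary field with $\mathrm{char}(\mathbb{F})\nmid|G|$, hence the table's values are characteristic-independent. Your invocation of Knop's inequality $\beta^p(G)\ge\beta^0(G)$ is a tidy way to make the lower bounds automatic (and in fact makes your final ``audit of the explicit lower-bound constructions'' superfluous), but the substance --- verifying that the upper-bound machinery is characteristic-free --- is exactly what the paper relies on.
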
 

\begin{proof} It has been long known that if $G$ is abelian then $\beta^p(G)=\mathsf D(G)$ for all $p\nmid  |G|$. Formula \eqref{indextwo} for the Noether number of a group  with a cyclic subgroup of index two is valid in all non-modular characteristic. 
Finally, the quantities in the table in Section~\ref{sec:table} are independent of the characteristic of the base field (provided that it is non-modular), whence the statement holds also for the remaining groups of order less than $32$. 
\end{proof}

\subsection{ Further observations }

Remark that the groups $S_3\times C_3$ and $(C_3 \times C_3) \rtimes _{-1} C_2$ both have the structure $(C_3 \times C_3) \rtimes _{\alpha} C_2$, the only difference being in the automorphism $\alpha$. However, their Noether numbers are  different. 

The groups $Dih_8\times C_2$, 
$(C_2\times C_2)\rtimes C_4$ and the Pauli group all have the structural description $(C_4 \times C_2) \rtimes_{\alpha} C_2$, only  the automorphism $\alpha$ being different in the three cases. The Noether numbers of the first two are  
equal, and differ from the Noether number of the third one. 

This shows that it would be interesting to understand how $\beta(A \rtimes_{\alpha} B)$ depends on  $\alpha$.

\section{Davenport constants}\label{sec:davenport}

\subsection{The monoid of product-one sequences}\label{sec:monoid}

In this section we introduce further notation related to the small and large Davenport constants of a not necessarily abelian finite group. We follow the presentation of \cite{CzDG}. 

Let  $G_0 \subseteq G$ be a non-empty subset of a finite group $G$. A {\it sequence} over $G_0$
means a finite sequence of terms from $G_0$ which is unordered, and
repetition of terms is allowed (in other words, a sequence over $G_0$  is a multiset of elements from $G_0$). A sequence will be considered as an element of the free abelian monoid $\mathcal F(G_0)$ whose generators are identified with the elements of $G_0$.
We use the symbol ``$\bdot$'' for the multiplication in the monoid $\mathcal F (G_0)$ -- this agrees with the convention in the monographs \cite{Ge-HK06a, Gr13a} -- and we denote multiplication in $G$ by juxtaposition of elements. 
For example, considering elements $g_1,g_2\in G_0$ we have that $g_1 \bdot g_2 \in \mathcal F (G_0)$ is a sequence of length $2$, while $g_1 g_2$ is an element of $G$. Furthermore, 
we use brackets for the exponentiation in $\mathcal F (G_0)$. So for $g \in G_0$, $S \in \mathcal F (G_0)$, and $k \in \mathbb N_0$, we have
\[
g^{[k]}=\underset{k}{\underbrace{g\bdot\ldots\bdot g}}\in \mathcal F (G)  \quad  \text{and}  \quad S^{[k]}=\underset{k}{\underbrace{S\bdot\ldots\bdot S}}\in \mathcal F (G) \,.
\]
Let 
\[
S = g_1 \bdot \ldots \bdot g_{|S|} = \prod_{g \in G_0} g^{[\mathsf v_g (S)]} 
\] 
be a sequence over $G_0$;  here $\mathsf v_g (S)$ is the {\it multiplicity} of $g$ in $S$
and we call $|S|=\sum_{g\in G}\mathsf v_g (S)$ the {\it length} of $S$.  
If $\mathsf v_g(S)>0$, i.e. $S=g\bdot R$ for a sequence $R$ 
with $|R|=|S|-1$, then we 
write  
\[S \bdot g^{[-1]}=R\] 
for the  sequence obtained from $S$ by removing one occurrence of $g$.  
More generally, we write 
$R=S\bdot T^{[-1]}$ if we have $S=R\bdot T$ for some sequences $R,S,T$.  
The identity element  $1_{\mathcal F ( G_0)}$ in 
$\mathcal F (G_0)$ is called the {\it trivial sequence}, and has length  
$|1_{\mathcal F ( G_0)}|=0$.  
We have the usual divisibility relation in the free abelian monoid $\mathcal F (G_0)$ and write $T \mid S$ if $T$ divides $S$.  
A divisor $T$ of $S$ will also be called  a {\it subsequence} of $S$. 
We call $\supp (S) = \{g\in G_0 \mid    \mathsf v_g(S)>0\} \subseteq G_0$ the {\it support} of $S$.
The {\it set of products} of $S$ is 
\[
\pi (S) = \{ g_{\tau (1)} \ldots  g_{\tau (|S|)} \in G \mid \tau\in {\mathrm{Sym}}\{1,\dots,|S|\} \} \subseteq G\]
(if $|S|= 0$, we use the convention that $\pi (S) = \{1_G \}$). 
Clearly, $\pi (S)$ is contained in a $G'$-coset, where 
$G' = [G,G]=\langle g^{-1}h^{-1}g h \mid g, h \in G\rangle$ denotes the commutator subgroup of $G$. 
Set
\begin{align}\label{bigpi}
\Pi (S) = \underset{1_{\mathcal{F}(G_0)} \ne T}{\bigcup_{T \mid S}} {\pi}(T)  \subseteq G .
\end{align}
The sequence $S$ is called a 
\begin{itemize}
\item  {\it product-one sequence} if $1_G \in \pi (S)$,

\item {\it product-one free sequence} if $1_G \notin \Pi (S)$.
\end{itemize}
The set 
\[
\mathcal B (G_0) = \{ S \in \mathcal F (G_0) \colon 1_G \in  \pi (S) \}
\]
of all product-one sequences over $G_0$ is obviously a submonoid of $\mathcal{F}(G_0)$.  
We denote by $\mathcal{A}(G_0)$ the set of {\it atoms} in the monoid $\mathcal{B}(G_0)$. The length of an atom is clearly bounded by 
$|G|$. The {\it large Davenport constant} of $G_0$ is 
\[
\mathsf D (G_0) = \max \{ |S| \colon S \in \mathcal A (G_0) \} \in \mathbb N 
.\]
Moreover, we denote by $\mathcal{M}(G_0)$ the set of product-one free sequences over $G_0$ and we define  the {\it small Davenport constant} of $G_0$ as 
\[
\mathsf d (G_0) = \max \{ |S|  \colon S \in \mathcal M(G_0) \} .  \]
We have the inequality 
\[\mathsf d(G_0)+1\le \mathsf D(G_0)\] 
with equality when the elements in $G_0$ commute with each other. 

\subsection{Some known results}\label{subsec:known-davenport} 

For a non-cyclic group $G$ with a cyclic subgroup of index two  
Olson and White \cite{olson-white} proved that $\mathsf d(G) = \frac 1 2 |G|$.  
Morover,  recently it was  proven by Geroldinger and Grynkiewicz  \cite{GeGryn} that for these groups $\mathsf D(G) = \mathsf d(G) + |G' |$.

For the non-abelian semidirect product $C_p\rtimes C_q$ 
where  $p,q$ are odd primes it was shown by Grynkiewicz \cite[Corollary 5.7 and Theorem 5.1]{Gryn} that we have
$\mathsf d(C_p\rtimes C_q)=p+q-2$ and $\mathsf D(C_p\rtimes C_q)=2p$.

\subsection{The large Davenport constant for $H_{27}$} \label{sec:heisenberg} 
Consider $H_{27}$, the Heisenberg group with 27 elements having the presentation 
$
\langle a,b,c \mid a^3=b^3=c^3=1,\ c= [a,b]=a^{-1}b^{-1}ab\rangle$.
This is an extraspecial group,  its commutator subgroup $\langle c \rangle$ coincides with the center $Z:=Z(H_{27})$. As a result, the commutator identities (which hold for any group) take the following simpler form in this particular case:
\begin{align}\label{comm}
[x,yz] = [x,y][x,z]  \quad [xy,z] = [x,z][y,z]
\end{align}
for any $x,y,z \in H_{27}$. As $[c,x]=1$ for any $x \in H_{27}$ we see that the value of $[x,y]$ depends  only on the cosets $xZ$ and $yZ$ so that the commutator defines in fact a bilinear map on $H_{27}/Z \cong C_3 \times C_3 $ with values in $ C_3$. Moreover as $c$ commutes with every other element of the group it is immediate that every $Z$-coset has a  representative of the form $a^ib^j$ for some $i,j \in \mathbb{Z}/3\mathbb{Z}$ and by repeated applications of \eqref{comm} we get 
\begin{align}\label{det_form}
[a^ib^j, a^kb^l] = c^{il-jk} = c^{\det \left (\begin{smallmatrix} i&j \\ k & l \end{smallmatrix} \right)}.
\end{align}
By \eqref{det_form} the elements $x=a^ib^j$ and $y = a^kb^l$ commute if and only if the vectors $(i,j)$ and $(k,l)$ are linearly dependent over $\mathbb{Z}/3\mathbb{Z}$. 
For the rest we denote by $\bar x$ the image of any $x \in H_{27}$ at the 
natural surjection $H_{27} \to C_3 \times C_3$ and we extend this notation to sequences in the obvious way, as well. 

We say that two sequences $S$ and $T$ over a group $G$ are {\it similar} if  $\alpha(S) = T$ for an automorphism $\alpha \in \Aut(G)$ (the action of $\Aut(G)$ on $G$ extends naturally to an action on $\mathcal{F}(G)$).  
A sequence $S$ over $G$ is called \emph{degenerate} if $\supp(S)$ is contained in a proper subgroup of $G$. 

\begin{lemma} \label{grynk}
Let $T=R\bdot S$ be a product-one sequence such that $|\pi(R)|=3$  and $S$ is not product-one free. Then $T$ is not an atom. 
\end{lemma}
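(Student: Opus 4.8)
The plan is to exhibit a non-trivial factorization $T=T_1\bdot T_2$ inside the monoid $\mathcal{B}(H_{27})$ of product-one sequences, which by definition of an atom shows at once that $T$ is not an atom. Since $S$ is not product-one free we have $1_G\in\Pi(S)$, so there is a non-empty subsequence $S'\mid S$ with $1_G\in\pi(S')$; thus $S'$ is a non-empty product-one sequence. Set $R_0:=T\bdot (S')^{[-1]}$, which is legitimate because $S'\mid S\mid T$, and which satisfies $R_0\bdot S'=T$ and $R\mid R_0$. The whole proof then reduces to showing that $R_0$ is again a product-one sequence: granting this, $T=R_0\bdot S'$ is a factorization with both factors non-trivial, since $S'$ is non-empty by construction and $R_0\ne 1_{\mathcal F(H_{27})}$ because $R\mid R_0$ and $R$ is itself non-trivial (the trivial sequence would give $|\pi(R)|=1\ne 3$).

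To prove $R_0$ is product-one I would argue entirely through the commutator subgroup $[H_{27},H_{27}]=\langle c\rangle$, which is central of order $3$. First, $\pi(R)$ is contained in a single $\langle c\rangle$-coset and has cardinality $3=|\langle c\rangle|$, hence it is a \emph{full} $\langle c\rangle$-coset. Writing $R_0=R\bdot C$ with $C:=S\bdot (S')^{[-1]}$ and fixing any $g\in\pi(C)$ (take $g=1_G$ if $C$ is trivial), the orderings of $R_0$ that list all terms of $R$ first show that $\pi(R)g\subseteq\pi(R_0)$; since $|\pi(R)g|=3$ and $\pi(R_0)$ lies in a single coset of size $3$, in fact $\pi(R_0)=\pi(R)g$ is a full $\langle c\rangle$-coset. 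Now pass to the abelian quotient $H_{27}/\langle c\rangle$: the image of $\pi$ of any sequence is a single element there, and it is multiplicative with respect to $\bdot$. The image of $\pi(S')$ is trivial because $1_G\in\pi(S')$, so the image of $\pi(T)=\pi(R_0\bdot S')$ equals that of $\pi(R_0)$; but $1_G\in\pi(T)$ forces the image of $\pi(T)$, and hence of $\pi(R_0)$, to be trivial. A full $\langle c\rangle$-coset whose image in $H_{27}/\langle c\rangle$ is trivial is $\langle c\rangle$ itself, which contains $1_G$. Therefore $R_0\in\mathcal{B}(H_{27})$, which finishes the argument.

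The one genuinely delicate point — and the reason the hypothesis $|\pi(R)|=3$ is indispensable — is that in a non-abelian group deleting a product-one subsequence from a product-one sequence need not leave a product-one sequence. What rescues the argument is that $\pi(R)$ being a full coset of the commutator subgroup propagates to $\pi(R_0)$ being a full coset, so that pinning down the image of $\pi(R_0)$ modulo $[H_{27},H_{27}]$ determines $\pi(R_0)$ completely; this is exactly where $|\pi(R)|=3$ enters, and I expect it to be the only step requiring care. Everything else is routine bookkeeping in the free abelian monoid $\mathcal{F}(H_{27})$.
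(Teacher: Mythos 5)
Your argument is correct and is essentially the paper's own proof: remove a non-empty product-one subsequence $S'$ of $S$ from $T$, note that the image of the complement in $H_{27}/\langle c\rangle$ is zero-sum so its product set lies in $Z$, and use $|\pi(R)|=3$ to see that this product set is the full coset $Z\ni 1_G$, giving the non-trivial factorization $T=R_0\bdot S'$. Your write-up merely spells out in more detail the full-coset and quotient steps that the paper compresses into one line.
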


\begin{proof} 
By the assumption on $S$ there is a non-empty product-one sequence $U\mid S$. Consider the sequence  $V=T\bdot U^{[-1]}$. Then $\bar{V}$ is a zero-sum sequence 
over $C_3\times C_3$, whence $\pi(V) \subseteq Z$. But for $R \mid V$ we have $|\pi(R)|= 3$ hence  $\pi(V) = Z$ so that $V$ is also a product-one sequence. 
Thus the equality $T=U\bdot V$ shows that $T$ is not an atom.  
\end{proof}

\begin{lemma} \label{fullcoset}
Let $T$ be a non-degenerate sequence  over $H_{27} \setminus Z$ of length at least $3$.   
Then either $|\pi(T)|=3$ 
or  $\bar T= \bar e \bdot \bar f \bdot (-\bar e{-\bar f})$ for a basis $\{\bar e,\bar f\}$ of $H_{27}/Z$
\end{lemma}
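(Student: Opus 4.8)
\textbf{Proof plan for Lemma~\ref{fullcoset}.}

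The plan is to analyze the sequence $T$ over $H_{27}\setminus Z$ via its image $\bar T$ over $C_3\times C_3$, together with the bilinear commutator form \eqref{det_form}. First I would fix two terms $x,y$ in $T$ whose images $\bar x,\bar y$ are linearly independent in $H_{27}/Z\cong C_3\times C_3$; such a pair exists because $T$ is non-degenerate, so $\supp(\bar T)$ cannot lie in a single cyclic subgroup (if it did, a lift of $\supp(T)$ would generate a proper subgroup of $H_{27}$, contradicting non-degeneracy). By \eqref{det_form} we have $[x,y]=c^{\pm 1}$, so $x$ and $y$ generate a copy of $H_{27}$ already, and the key observation is that for the length-$2$ subsequence $x\bdot y$ we get $\pi(x\bdot y)=\{xy,yx\}$ with $xy\ne yx$, i.e. $|\pi(x\bdot y)|=2$, and these two products lie in the same $Z$-coset differing by $c^{\pm 1}$. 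Now take any third term $z$ of $T$ (using $|T|\ge 3$). The set $\pi(x\bdot y\bdot z)$ contains all six products obtained by inserting $z$ into the two words $xy$ and $yx$; I would compute that the three cyclic insertions into $xy$ already give three elements forming a full $Z$-coset \emph{unless} $z$ commutes with $xy$, and similarly for $yx$. Chasing this: if $\bar z$ is not a scalar multiple of $\overline{xy}$ then $[z,xy]\ne 1$ and the three products $zxy, xzy, xyz$ sweep out an entire $Z$-coset, so $\pi(x\bdot y\bdot z)\supseteq$ a full coset; since $\pi(x\bdot y\bdot z)$ lies in one coset of the commutator subgroup $Z$, this forces $|\pi(x\bdot y\bdot z)|=3$, hence $|\pi(T)|=3$ (as $\pi$ of a subsequence of length $\ge 3$ landing in a full $Z$-coset forces $|\pi(T)|\ge 3$, and $3$ is the maximum since $\pi(T)$ is inside a single $Z$-coset once $\bar T$ is ``large enough'' — more precisely $\pi(T)$ always sits in a coset of $G'=Z$, so $|\pi(T)|\le 3$ automatically). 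So the only way to avoid $|\pi(T)|=3$ is that \emph{every} term $z$ of $T$ has $\bar z\in\langle\overline{xy}\rangle$.

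The remaining case is therefore: $\bar x,\bar y$ independent, but every other term has image a multiple of $\overline{xy}=\bar x+\bar y$. Here I would argue that $\bar T$ can contain at most the three ``lines'' directions $\bar x$, $\bar y$, $\overline{xy}$ in a constrained way. Concretely: suppose some term has image $\lambda\bar x$ with $\lambda\ne 0$; replacing the chosen pair by $\{$that term$, y\}$ shows every term lies in $\langle\bar x+\bar y\rangle$ \emph{after} also re-running the argument with the new pair — forcing a contradiction unless the only directions occurring are exactly $\bar x$, $\bar y$, $\overline{-\bar x-\bar y}$, each once. I expect the clean bookkeeping to be: if two terms have the \emph{same} image direction as a third independent generator, one can reselect the independent pair to be ``generic'' and push everything into a single line, contradicting non-degeneracy; so each of the three directions $\bar e:=\bar x$, $\bar f:=\bar y$, $-\bar e-\bar f$ occurs with multiplicity exactly one, giving precisely $\bar T=\bar e\bdot\bar f\bdot(-\bar e-\bar f)$. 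I would also need to rule out $\bar T$ having two terms with the same image (e.g.\ $\bar x\bdot\bar x\bdot\bar y$): here $\pi$ of the three terms again sweeps a full coset because the repeated pair $x\bdot x'$ with $\bar x=\bar{x'}$ has $x,x'$ commuting only if $x'\in xZ$, and then inserting $y$ breaks commutativity — a short case check shows $|\pi(T)|=3$.

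The main obstacle will be the second case's combinatorics: carefully showing that ``every extra term is a multiple of $\overline{xy}$'' together with non-degeneracy and $|T|\ge 3$ pins $\bar T$ down to \emph{exactly} $\bar e\bdot\bar f\bdot(-\bar e-\bar f)$ with the three terms distinct in $H_{27}/Z$, rather than, say, $\bar e\bdot\bar f\bdot\bar f$ or longer configurations — this requires re-selecting the ``basis'' pair a few times and using the symmetry of the setup, and I would organize it by first proving no direction repeats (multiplicity $\le 1$ per direction once $|\pi(T)|\ne 3$) and then that no fourth direction can appear and that the total length is therefore $3$. The first case (existence of $z$ off the line $\langle\overline{xy}\rangle$ forcing a full coset) is essentially a direct computation with \eqref{comm} and \eqref{det_form} and should be routine.
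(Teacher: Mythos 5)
Your plan is correct and follows essentially the same route as the paper's proof: pick a noncommuting pair $e\bdot f$ supplied by non-degeneracy, use the commutator form on $H_{27}/Z$ to show that a third term in a bad position makes $\pi$ of a three-term subsequence a full $Z$-coset (hence $|\pi(T)|=3$, since $\pi(T)$ sits inside one $Z$-coset), leaving only the affine-line configuration $\bar e\bdot\bar f\bdot(-\bar e-\bar f)$; the paper eliminates the case $\bar g=\bar e+\bar f$ by a direct product computation where you instead re-select the noncommuting pair, and both work. One minor caution: when $\bar z=\pm\bar e$ the three insertions of $z$ into $ef$ alone yield only two products (as $[e,z]=1$), so you must also use insertions into $fe$ or other orderings (as the paper does) — a routine fix within the case check you already foresee.
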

\begin{proof}
Assume that $\pi(T)$ is not a full $Z$-coset. As $T$ is non-degenerate 
there must be two elements $e,f$ in $T$ such that $[e,f]=c$.  
Hence $|\pi(T)|= 2$. 
Let $g$ be an arbitrary element in $T\bdot (e\bdot f)^{[-1]}$.   
Then  $\bar g \neq \bar e$ because
otherwise $\pi (g \bdot e \bdot f ) \supseteq \{ gef, gfe, fge\} =gefZ$. 
Also $\bar g \neq -\bar e$ because
otherwise $\pi (g \bdot e \bdot f ) \supseteq \{ gef, gfe, efg\} =gefZ$. 
Similarly $\bar g$ is different from $\bar f$, $-\bar f$. 

As a result in the $\mathbb Z/ 3\mathbb Z$-vector space   $H_{27}/Z \cong C_3 \times C_3$ we have a relation 
$\alpha \bar e + \beta \bar f + \gamma \bar g=0$ 
where the coefficients $\alpha, \beta $ are non-zero. 
Moreover $\gamma \neq 0$ also holds by the linear independence of $\bar e$ and $\bar f$.
Up to similarity and the choice of the basis $e,f$ only two cases are possible:
(i) $ \bar e + \bar f = \bar g $, 
but then $[e,f]= [e,g]=c$, hence $\pi(e \bdot f \bdot g) = efgZ$, again a contradiction, or
(ii) $\bar e + \bar f + \bar g =0$; 
then  $[e,f] = [g,e] =[f,g]=c$ hence 
$efg = c  feg = fge = cgfe = gef = c egf$,
so that
$|\pi(e \bdot f \bdot g)| = 2$.
\end{proof}

In the proof below we shall use the following ad hoc terminology: 
A subset of $C_3 \times C_3$ of the form $\{ e,f,-e-f\}$ where $e,f$ form a basis of $C_3\times C_3$ will be called  an {\it affine line}, while a subset  of the form $\{ e,f,e+f\}$ where $e,f$ form a basis of $C_3\times C_3$
will be called  an {\it affine cap} (the terminology is motivated by the literature on the so-called {\it cap set problem}). Note that a three-element subset of $C_3\times C_3$ in which any two elements are linearly independent over 
$\mathbb{Z}/3\mathbb{Z}$ is either an affine line or an affine cap.

\begin{proposition} \label{H27large}
$\mathsf D (H_{27}) \le 8$. 
\end{proposition}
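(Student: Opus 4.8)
The goal is to show that every atom $S$ in $\mathcal{B}(H_{27})$ has length at most $8$; equivalently, every product-one sequence of length $\ge 9$ is non-atomic. I would argue by contradiction: suppose $S\in\mathcal{A}(H_{27})$ with $|S|\ge 9$. The first reduction is to control how much of $S$ lies in the center $Z=\langle c\rangle$. Since $\mathsf{v}_z(S)\le 2$ for any $z\in Z$ (three copies of a central element already form a product-one subsequence, contradicting atomicity unless they are all of $S$, which is too short), the central part $S_Z$ of $S$ has length at most $|Z|-1=2$ at each element but more importantly, by Lemma~\ref{grynk}-type reasoning, if the non-central part $S'=S\bdot S_Z^{[-1]}$ already contains a non-degenerate subsequence with full image-coset product (i.e.\ with $|\pi(\cdot)|=3$) and $S_Z$ is nonempty, we are done. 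So the real work is with $S'$, a sequence over $H_{27}\setminus Z$, of length at least $9-2=7$ (and one must track the central contribution carefully).

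\textbf{Key steps.} First I would reduce to studying $\bar S'$, the image in $C_3\times C_3$. If $\bar S'$ has a zero-sum subsequence $\bar U$ with $\bar U$ short — concretely a zero-sum subsequence of $C_3\times C_3$ of length $\le 4$ whose preimage $U\subseteq S'$ is non-degenerate of length $\ge 3$ — then by Lemma~\ref{fullcoset} either $|\pi(U)|=3$, in which case Lemma~\ref{grynk} applied to $T=S$, $R=U$ (and the rest as $S$) finishes, provided the complement is not product-one free; or $\bar U$ is an affine line and $\pi(U)$ is a single coset. The plan is to exploit the large Davenport constant $\mathsf{D}(C_3\times C_3)=5$ (rank-two abelian formula $n+m-1$): since $|\bar S'|\ge 7>5$, $\bar S'$ is not an atom in $\mathcal{B}(C_3\times C_3)$, so it factors as $\bar S'=\bar U_1\bdot\bar U_2$ into two nonempty zero-sum sequences. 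I would then do a careful case analysis on the possible "shapes" of zero-sum sequences over $C_3\times C_3$ — noting that a zero-sum sequence with no zero-sum proper subsequence has length $\le 5$ and its support is, up to automorphism, a very restricted list (a repeated element $g^{[3]}$, a line $\{e,f,-e-f\}$ each with multiplicity one, or combinations like $e^{[2]}\bdot(-e)^{[2]}$, etc.) — and in each case lift to $H_{27}$, using the bilinear form $[a^ib^j,a^kb^l]=c^{il-jk}$ from \eqref{det_form} to compute $\pi$ of the relevant subsequences and show that $S$ splits off a nontrivial product-one subsequence.

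\textbf{Main obstacle.} The delicate case is when $\bar S'$ is "saturated with affine lines": every short zero-sum subsequence of $\bar S'$ is an affine line, so Lemma~\ref{fullcoset} never hands us a $|\pi|=3$ block directly, and the products $\pi(U)$ of the pieces are single cosets whose coset-representatives must be tracked to decide whether the complementary sequence is product-one. Handling this requires combining the \emph{values} of the commutators (to see when reordering two blocks multiplies their product by a nontrivial power of $c$) with the combinatorics of how several affine lines can coexist in a length-$\ge 7$ sequence over $C_3\times C_3$ — in particular one should be able to extract two disjoint affine lines $\bar U_1,\bar U_2$ (or a line plus a repeated element) and show that either $U_1\bdot U_2$ already has $|\pi|=3$ (because the four or six vectors involved span and a suitable pair has nonzero determinant), or one can rearrange to make the total product $1_G$. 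I expect this bookkeeping — ruling out the handful of extremal configurations where the center-count, the line-structure, and the commutator values conspire — to be where essentially all the effort goes; the rest is the clean machinery already set up in Lemmas~\ref{grynk} and~\ref{fullcoset} together with $\mathsf{D}(C_3\times C_3)=5$.
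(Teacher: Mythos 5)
Your outline correctly identifies the framework that the actual proof uses — reduction to the image $\bar S$ in $H_{27}/Z\cong C_3\times C_3$, the fact $\mathsf D(C_3\times C_3)=5$, and the interplay of Lemma~\ref{grynk} with the dichotomy of Lemma~\ref{fullcoset} — but what you have written is a plan, not a proof. The entire substance of the argument is exactly the part you defer in your ``main obstacle'' paragraph: one must show that a putative atom of length $9$ cannot exist by systematically excluding the possible shapes of $\bar S$, namely ruling out $0$ in the support, ruling out the configurations $e\bdot f\bdot(-e)\bdot(-f)$ and $e\bdot f\bdot(e+f)^{[2]}$ (whose preimages have full coset product by Lemma~\ref{fullcoset}, after which one must still analyze the length-$5$ complement and derive a contradiction — this step is not automatic), then showing that the support of $\bar S$ must be exactly one affine line $\{e,f,-e-f\}$, and finally, in that saturated-line situation, extracting a subsequence $Q$ with $\bar Q=e^{[3]}$ and proving by a swap argument (using that $\pi$ of the line is the full coset minus $1$) that its three terms are equal, hence $Q$ is product-one and splits off. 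None of this bookkeeping is carried out in your proposal; saying ``I expect this to be where essentially all the effort goes'' concedes that the proof is missing precisely where it is needed.

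There is also a local error at the start: having a proper product-one subsequence does \emph{not} by itself contradict atomicity (the atom $a^{[3]}\bdot b^{[3]}\bdot c^{[2]}$ exhibited in Proposition~\ref{prop:lower8} contains $a^{[3]}$), so your parenthetical justification of $\mathsf v_z(S)\le 2$ for central $z$ is wrong as stated. It can be repaired — for central $z$ of order $3$ the complement $S\bdot z^{[-3]}$ has the same product set as $S$, so $S$ would indeed factor — but as written the inference is invalid, and the same care is needed every time you want to split off a product-one piece: you must always verify that the complement is product-one too, which is the recurring difficulty that Lemma~\ref{grynk} and the commutator formula \eqref{det_form} are there to handle. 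In short, the approach is the right one and matches the paper's, but the case analysis constituting the proof of Proposition~\ref{H27large} has not been done.
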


\begin{proof}
Assume indirectly that there is an atomic product-one sequence $T$ of length at least $9$.  After ordering the elements of $T=g_1 \bdot g_2 \bdot\bdot\bdot g_n$ in such a way that $g_1g_2\cdots g_n=1$ and replacing $T$ with $T' = g_1 \bdot \bdot \bdot g_8 \bdot ( g_9g_{10} \cdots g_n)$ we may assume that $|T| =9$.

{\bf A.} If $T$ is degenerate then it is in fact an irreducible zero-sum sequence over $C_3 \times C_3$, hence $|T| \le \mathsf D(C_3 \times C_3) =5$, a contradiction. 
So for the  rest we  assume that $T$ is non-degenerate so that $\bar T$ contains a basis $\{e,f\}$ of $H_{27}/Z$.  

{\bf B.}  
$\bar T$ contains an element $g \not\in \langle e \rangle \cup \langle f \rangle$ (i.e. $\bar T$ has an affine line or an affine cap as a subsequence).  
Otherwise, if $\bar T$ is contained in the set $\langle e \rangle \cup \langle f \rangle$ then  $T = C  \bdot A_1 \bdot \dots \bdot A_t$ where $\bar C = 0^{[k]}$ and $\bar A_i \in \{e^{[3]},  f^{[3]},-e\bdot e , -f \bdot f \}$. 
Choose $a_i\in\pi(A_i)$ for  $i=1,\ldots,t$. Then  the sequence  $Q:=C \bdot a_1 \bdot \ldots \bdot a_t$ is a zero-sum sequence over $Z \cong C_3$, hence if $k+t > 3$ then $Q$ factors into the product of two non-empty zero-sum sequences and $T$ factors accordingly,  a contradiction.  If $k >0$ then  we get $k+t \ge k+ (9-k)/3 >3$, as $|A_i| \le 3$ for  all $i$,
again a contradiction.  Hence $k=0, t=3$ and $\bar T$ is similar to  $e^{[6]}\bdot f^{[3]}$. Then  $T$ contains a degenerate subsequence of length $6 > \mathsf D(C_3 \times C_3)$, which in turn  must contain a proper zero-sum subsequence $R$ such that $\bar R = e^{[3]}$. Then the complement $S = T \bdot R^{[-1]}$ has $|S|=6$, hence $\pi(S) = Z$ by Lemma~\ref{fullcoset}, a  contradiction by Lemma~\ref{grynk}.

{\bf C.} 
 $\bar T$ cannot contain a subsequence $\bar R \mid  \bar T$ 
similar to one of the following sequences: 
 \begin{align}
& e\bdot  f \bdot (-e) \bdot (-f) \label{case1}\\
& e \bdot f \bdot (e+f)^{[2]} \label{case2}
\end{align}
Indeed, these sequences $\bar R$ are such that for  their preimages $R$  we have that $\pi(R) = Z$ by Lemma~\ref{fullcoset}. So for any such $R \mid  T$   the complement $S = T \bdot R^{[-1]}$ must be product-one free, as otherwise we would get a contradiction by Lemma~\ref{grynk}.  Hence  $S$ cannot be degenerate, as $|S|=5 =\mathsf D(C_3 \times C_3)$ and 
$\pi(S) \subseteq Z \setminus \{1 \}$. Therefore by Lemma~\ref{fullcoset} it is necessary that $S = x \bdot y \bdot L$ where $\bar x = \bar y =0$ and $\bar L$ is an affine line. 
Moreover by our assumption we must have $\pi(L) =Z \setminus \{ 1\}$ and $x,y \in Z \setminus \{1\}$, as well. But then $1\in \pi(x \bdot L)$, a contradiction.

 {\bf D.}
 We claim that $\bar T$ must contain an affine line and $0\not\in\mathrm{supp}(\bar T)$. For assume that $\bar T$ does not contain an affine line.  
 Then $\bar T$ still contains an affine cap by {\bf B}, so we may assume that $e\bdot f \bdot (e+f)\mid T$.
 Then to avoid affine lines it is necessary that $e-f,f-e,-e-f$ do not occur in $\bar T$. 
To avoid subsequences of type \eqref{case1}  we have either $\mathsf v_{\bar T}(-e)=0$ or 
$\mathsf v_{\bar T}(-f)=0$. 
Assume now that $\mathsf v_{\bar T}(-e)>0$ and $\mathsf v_{\bar T}(-f)=0$ 
(the case    $\mathsf v_{\bar T}(-e)=0$ and $\mathsf v_{\bar T}(-f)>0$ is analogous). 
Then to avoid subsequences of type \eqref{case2} we must have $\mathsf v_{\bar T}(f) = \mathsf v_{\bar T}(e+f) = 1$. As a result the image of $\bar T$ modulo $\langle e \rangle$ is $f^{[2]}\bdot 0^{[7]}$, but this is not a product-one sequence  in $C_3\times C_3 / \langle e \rangle \cong C_3$, a contradiction.

So it remains that $\bar T = 0^{[k]} \bdot e^{[i]} \bdot f^{[j]} \bdot (e+f) $.
Since $\bar T$ is a zero-sum sequence over $C_3 \times C_3$ we must have $i \equiv j \equiv 2 \pmod 3$. 
As $|\bar T| = 9$  either $k + i =6$ or $k+j =6$ and we get again a degenerate subsequence 
which must contain a zero-sum subsequence $R$ such that its complement $S = T \bdot R^{[-1]}$ has $\pi(S) = Z$ by Lemma~\ref{fullcoset}, again a contradiction. 
Thus $\bar T$ contains an affine line. An argument similar to the one in {\bf C} shows also that $0$ does not occur in   $\bar T$. 

From now on we  assume that $\bar T$ contains the affine line $\bar L =  \{e,f,g\}$ as a subsequence. 

{\bf E.} 
Next we claim that $\sum_{x \in -\bar L}\mathsf v_{\bar T}(x) \le 1$. 
Indeed, $\bar T$ cannot contain two different elements $x,y$ belonging to $- \bar L$ because that would yield a subsequence of type \eqref{case1}. Moreover, 
$\mathsf v_{\bar T}(x) \ge 2$ for some $x$ in $-L$ would yield  the type \eqref{case2}  subsequence 
$x\bdot x\bdot L\bdot (-x)^{[-1]}$  of $\bar T$.

{\bf F. }
We claim that $\mathsf v_{\bar T}(e-f)=0$ and $\mathsf v_{\bar T}(f-e)=0$. 
For assume to the contrary that $\mathsf v_{\bar T}(e-f)>0$  (the case $\mathsf v_{\bar T}(f-e)>0$  being analogous). 
Observe that for any $x$ in  $L$ we must have $\mathsf v_{\bar T}(x)  =1$  to avoid subsequences of type \eqref{case2}. 
In view of {\bf E} it follows that $\mathsf v_{\bar T}(e-f)+ \mathsf v_{\bar T}(f-e)\ge |T|-4 =5=\mathsf D(C_3\times C_3)$,
so that $T$ has again a degenerate subsequence containing a zero-sum subsequence $R$ (with $\mathrm{supp}(\bar R)\subseteq\{0,e-f,f-e\}$) such that its complement $S = T \bdot R^{[-1]}$ has $\pi(S) =Z$ by Lemma~\ref{fullcoset}, leading to a contradiction by Lemma~\ref{grynk}.

{\bf G.}
Now we prove that $\supp(\bar T) = L$. For otherwise it would follow from {\bf D, E, F}  that $\bar T = (-x) \bdot x \bdot \bar T_0$ where $\mathsf v_L(x)>0$ and $\supp (\bar T_0) \subseteq L$. But $\bar T_0$ must also be a zero-sum sequence, hence $|\bar T_0|$ is divisible by $3$
because  the only irreducible zero-sum sequences over $C_3\times C_3$ with support contained in $L$ are $e^{[3]}, f^{[3]}, (-e{-f})^{[3]}, e\bdot f\bdot (-e{-f})$.
 But then $|T| \equiv 2 \mod 3$, contradicting the assumption that $|T|=9$.

 {\bf H.}
So we have a factorization $T= L\bdot R$ such that $\bar L = e \bdot f \bdot g$ with $e+f+g=0\in C_3\times C_3$ and $\supp(\bar R) \subseteq \{e,f,g \}$.
For any such factorization $\bar L$ and $\bar R$ are  zero-sum sequences, hence $\pi(R) \subseteq Z$, moreover $ |\pi(R)|=3$ by Lemma~\ref{fullcoset}, so that $1 \in \pi(R)$ and consequently $1 \not\in \pi(L)$. 
Observe now that an element in $ \supp(\bar T)$, say $e$ must have $\mathsf v_{\bar T}(e) \ge 3$. Let $Q = x_1 \bdot x_2 \bdot x_3 \mid T$ be such that $\bar Q = e^{[3]}$ and $x_1 \in L$. 
If $x_i \neq x_1$ for some $i >1$ then consider the sequence $L ' := L \bdot x_1^{[-1]} \bdot x_i$. By assumption $\pi(L) = Z \setminus \{ 1\}$ and $x_1^{-1} x_i \in Z \setminus \{1\}$ hence $\pi(L') = \pi(L)x_1^{-1} x_i \ni 1$. Thus $\bar L'=\bar L$ and $T=L'\bdot R'$would be a factorization of $T$ as a product of two product-one sequences. 
We conclude that $x_1= x_2 =x_3$, so that $Q$ is a product-one subsequence
and $T=Q\bdot S$ where $\bar S$ is a product-one sequence over $C_3\times C_3$ and hence $S$ 
is a product-one sequence by Lemma~\ref{fullcoset}. This contradicts the assumption that $T$ is an atom. 
\end{proof}

\begin{remark} 
The idea of Lemma~\ref{grynk} appears in the proof of a result of Grynkiewicz \cite[Corollary~3.4]{Gryn} which yields for the Heisenberg group of order $p^3$ the inequality 
\begin{align}\label{gryneq}\mathsf D(G) \le \mathsf d(G) + |G'| =\mathsf d(G)+p.\end{align} 
We established by our algorithm that  $\mathsf d(H_{27}) =6$ so in this case \eqref{gryneq}  gives $\mathsf D(G) \le 9$ which in view of  Proposition~\ref{H27large} is not sharp. On the other hand we show below that Proposition~\ref{H27large} is sharp.
\end{remark}

\begin{proposition}\label{prop:lower8}
$\mathsf D(H_{27}) \ge 8$.
\end{proposition}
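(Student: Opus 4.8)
The plan is to prove the bound by exhibiting an explicit atom of $\mathcal B(H_{27})$ of length $8$, namely
\[
T \;=\; a^{[3]}\bdot b^{[3]}\bdot c^{[2]}.
\]
As $\mathsf D(H_{27})$ is the maximal length of an atom, once $T\in\mathcal A(H_{27})$ is verified we get $\mathsf D(H_{27})\ge|T|=8$, and together with Proposition~\ref{H27large} this yields $\mathsf D(H_{27})=8$.

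The key observation is that $\pi(a^{[3]}\bdot b^{[3]})=Z$. Indeed, $a^{[3]}\bdot b^{[3]}$ is a sequence over $H_{27}\setminus Z$, it is non-degenerate since $\langle a,b\rangle=H_{27}$, it has length $6\ge 3$, and its image over $H_{27}/Z\cong C_3\times C_3$ has length $6$, hence is not of the form $\bar e\bdot\bar f\bdot(-\bar e-\bar f)$; so Lemma~\ref{fullcoset} gives $|\pi(a^{[3]}\bdot b^{[3]})|=3$. Since $3\bar a+3\bar b=0$ this product set lies in the coset $Z$, and $|Z|=3$, whence $\pi(a^{[3]}\bdot b^{[3]})=Z$. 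More generally, for any $j\ge 0$, appending the central terms $c^{[j]}$ to the various orderings of $a^{[3]}\bdot b^{[3]}$ shows $\pi(a^{[3]}\bdot b^{[3]}\bdot c^{[j]})=Z\cdot c^{j}=Z$. Taking $j=2$ we get $1\in\pi(T)$, so $T$ is product-one.

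It remains to show $T$ is an atom, i.e.\ that $T$ has no factorization $T=U\bdot V$ with $U$ and $V$ both non-empty product-one sequences. Since $\bar T$ is zero-sum over $C_3\times C_3$, for any factorization $\bar U$ is zero-sum exactly when $\bar V$ is; so it suffices to list the product-one \emph{proper} subsequences of $T$ and check that the complement of each is not product-one. If $U\mid T$ is product-one then $\bar U$ is zero-sum; since $\bar c=0$ and $\{\bar a,\bar b\}$ is a basis of $C_3\times C_3$, this forces $\mathsf v_a(U)\equiv\mathsf v_b(U)\equiv 0\pmod 3$, hence $\mathsf v_a(U)\in\{0,3\}$, $\mathsf v_b(U)\in\{0,3\}$, $\mathsf v_c(U)\in\{0,1,2\}$. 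When $\mathsf v_a(U)$ and $\mathsf v_b(U)$ are not both equal to $3$, $\supp(U)$ is contained in one of the abelian subgroups $\langle a,c\rangle$, $\langle b,c\rangle$, $\langle c\rangle$, so $\pi(U)$ is a singleton equal to $\{c^{\mathsf v_c(U)}\}$ (using $a^3=b^3=1$), which contains $1$ only for $\mathsf v_c(U)=0$; this gives $U\in\{a^{[3]},b^{[3]}\}$ apart from the trivial sequence. When $\mathsf v_a(U)=\mathsf v_b(U)=3$, we have $U=a^{[3]}\bdot b^{[3]}\bdot c^{[\mathsf v_c(U)]}$, which is product-one by the previous paragraph for each $\mathsf v_c(U)\in\{0,1,2\}$, the value $2$ being $U=T$ itself. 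Hence the product-one proper subsequences of $T$ are exactly $a^{[3]}$, $b^{[3]}$, $a^{[3]}\bdot b^{[3]}$ and $a^{[3]}\bdot b^{[3]}\bdot c$, and their complements in $T$ are $b^{[3]}\bdot c^{[2]}$, $a^{[3]}\bdot c^{[2]}$, $c^{[2]}$ and $c^{[1]}$; these are sequences over abelian subgroups with products $c^{2}$, $c^{2}$, $c^{2}$ and $c$ respectively, none equal to $1_{H_{27}}$. Therefore no factorization of $T$ into two non-trivial product-one sequences exists, so $T\in\mathcal A(H_{27})$ and $\mathsf D(H_{27})\ge 8$.

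The only point where intuition might mislead is that $T$ visibly contains the product-one subsequences $a^{[3]}$ and $b^{[3]}$; nevertheless $T$ is \emph{not} reducible, because in this non-abelian group the complement of a product-one subsequence need not itself be product-one even though its image in $H_{27}/Z$ is zero-sum — here the surviving central factor $c^{2}$ (resp.\ $c$) obstructs it. With this understood, the argument reduces to the short finite case analysis above, which Lemma~\ref{fullcoset} makes straightforward.
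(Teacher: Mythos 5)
Your proof is correct, and it rests on the same witness sequence $a^{[3]}\bdot b^{[3]}\bdot c^{[2]}$ as the paper's, so the two arguments share their core: the independence of $\bar a,\bar b$ in $H_{27}/Z$ forces any product-one divisor to use the $a$'s and $b$'s in multiples of three. The difference is in how atomicity is certified. The paper checks $1\in\pi(T)$ by the explicit relation $ccbbaaba=1$ and then argues that a nontrivial factorization is impossible because $c$ and $c^{-1}$ admit no expression in $a$'s and $b$'s of length less than six, splitting into cases according to how the two $c$'s are distributed between the putative factors. You instead invoke Lemma~\ref{fullcoset} to get $\pi(a^{[3]}\bdot b^{[3]})=Z$ (hence $1\in\pi(T)$), and then give a complete enumeration of the product-one divisors of $T$: the congruence $\mathsf v_a\equiv\mathsf v_b\equiv 0\pmod 3$ plus the fact that any divisor omitting one of $a^{[3]}$, $b^{[3]}$ is supported in an abelian subgroup (so its product set is the singleton $c^{\mathsf v_c}$) pins the list down to $a^{[3]}$, $b^{[3]}$, $a^{[3]}\bdot b^{[3]}$, $a^{[3]}\bdot b^{[3]}\bdot c$, and each complement visibly fails to be product-one. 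Your version buys a self-contained, exhaustive check (no appeal to minimal word length for $c^{\pm 1}$, and no case analysis on where the $c$'s land), at the price of leaning on Lemma~\ref{fullcoset}; the paper's version is slightly shorter and independent of that lemma. Both are complete proofs of $\mathsf D(H_{27})\ge 8$.
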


\begin{proof}
Assume  $c \in \pi(a^{[i]} \bdot b^{[j]})$ for some $i,j \in \mathbb{N}$.  Then  $i,j >0$ since $c \not\in \langle a \rangle$ and $c \not\in \langle b \rangle$. Moreover $\bar c = 0 =\bar a^i \bar b^{j}$ holds, as well. But since $\bar a$ and $\bar b$ are independent in $C_3 \times C_3$ it is necessary that $i\equiv j \equiv 0 \mod 3$. It follows that $i+j \ge 6$.  On the other hand $c= [a,b] = aabbab$ hence the minimal  expression of $c$ in terms of $a$ and $b$ has length $6$. By a similar argument  the same is true for $c^{-1}$, as well. 

Now $ccbbaaba=1$ and we claim that the product-one sequence $S= a^{[3]} \bdot b^{[3]} \bdot c^{[2]}$ is an atom.  Otherwise let $S = T \bdot R$ for some  non-empty product-one sequences $T,R$ and we may assume that $\mathsf v_T(c)>0$. If $\mathsf v_R(c)>0$ holds, too, then $T \bdot c^{[-1]}$ contains only $a, b$ and $c^{-1} \in \pi(T \bdot c^{[-1]})$. Hence, by what has been  said before, $T \bdot c^{[-1]} = a^{[3]} \bdot b^{[3]}$. But then $R=c$, which is not a product-one sequence.  From this contradiction it follows that $\mathsf v_R(c)=0$ hence $c^{[2]} \mid T$. But then $\mathrm{supp}(T\bdot c^{[-2]})\subseteq \{a,b\}$  and  $c \in \pi(T\bdot c^{[-2]}) $, hence again $T\bdot c^{[-2]}= a^{[3]} \bdot b^{[3]}$,  so that $R$ is empty, which is a contradiction.
\end{proof}

\subsection{Diameter of the Cayley digraph}
Next we state a general inequality that can be obtained by 
a similar argument as  the proof of Proposition~\ref{prop:lower8}.  
This result is not used here, but seems worthwhile to mention, as it involves the diameter of Cayley digraphs, which (in contrast with the large Davenport constant for a non-abelian group) has a rather extensive literature:  

\begin{proposition}
Let $X$ be a set of generators of a finite group $G$ and $\mathrm{Cay}(G,X)$ the corresponding Cayley digraph.
Then $\mathsf D(G)  \ge \mathrm{diam}(\mathrm{Cay}(G,X))+1$.
\end{proposition}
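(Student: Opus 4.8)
The plan is to exhibit an atom in $\mathcal B(G)$ of length $\mathrm{diam}(\mathrm{Cay}(G,X))+1$, constructed in the spirit of the product-one sequence used in the proof of Proposition~\ref{prop:lower8}. Write $d=\mathrm{diam}(\mathrm{Cay}(G,X))$. Since $\mathrm{Cay}(G,X)$ is vertex-transitive under left translation, the directed distance from $g$ to $h$ equals the directed distance from $1_G$ to $g^{-1}h$, which in turn is the least number $\ell_X(g^{-1}h)$ of factors needed to write $g^{-1}h$ as a product of elements of $X$ (this is finite since $X$ generates the finite group $G$). Hence $d=\max_{h\in G}\ell_X(h)$. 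First I would choose $g\in G$ with $\ell_X(g)=d$ and fix a shortest expression $g=x_1x_2\cdots x_d$ with $x_i\in X$; we may assume $d\ge 1$, since for $d=0$ the inequality $\mathsf D(G)\ge 1$ is trivial.

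Next I would form the sequence $S=x_1\bdot x_2\bdot\cdots\bdot x_d\bdot g^{-1}\in\mathcal F(G)$. Ordering its terms as $(x_1,\dots,x_d,g^{-1})$ gives the product $g\,g^{-1}=1_G$, so $S\in\mathcal B(G)$ and $|S|=d+1$. The whole argument then reduces to the claim that $S$ is an atom, for then $\mathsf D(G)\ge|S|=d+1$, as desired.

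To prove that $S$ is an atom I would first record the elementary fact that if $W$ is a product-one sequence and $w\mid W$, then $w^{-1}\in\pi(W\bdot w^{[-1]})$: take an ordering of $W$ with product $1_G$ and cyclically rotate it so that an occurrence of $w$ comes first; then the product of the remaining terms equals $w^{-1}$. Now suppose $S=T\bdot R$ with $T,R$ non-empty product-one sequences. Since $\mathsf v_{g^{-1}}(S)\ge 1$, one of $T,R$ contains $g^{-1}$; say $g^{-1}\mid T$. By the fact just recorded, $g=(g^{-1})^{-1}\in\pi(T\bdot (g^{-1})^{[-1]})$. But removing the distinguished copy of $g^{-1}$ from $S$ leaves exactly $x_1\bdot\cdots\bdot x_d$ (even if $g^{-1}$ also equals some $x_i$, since in $\mathcal F(G)$ one simply decreases the multiplicity of $g^{-1}$ by one), so $T\bdot (g^{-1})^{[-1]}$ is a subsequence of $x_1\bdot\cdots\bdot x_d$ of length $|T|-1\le d-1$, all of whose terms lie in $X$. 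Thus $g$ is a product of at most $d-1$ elements of $X$, i.e. $\ell_X(g)\le d-1<d$, contradicting the choice of $g$. Hence $S$ is an atom.

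I do not expect a serious obstacle; the argument is short. The only points requiring care are the translation between the digraph diameter and minimal product-length (routine, via vertex-transitivity), the bookkeeping of multiplicities when $g^{-1}\in X$ (handled by working in $\mathcal F(G)$, where deleting one occurrence is unambiguous), and the borderline case $|T|=1$, in which $T\bdot(g^{-1})^{[-1]}=1_{\mathcal F(G)}$ and the empty-product convention $\pi(1_{\mathcal F(G)})=\{1_G\}$ still forces $g=1_G$, again contradicting $d\ge 1$.
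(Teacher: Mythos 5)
Your proof is correct and follows essentially the same route as the paper: build a product-one sequence of length $\mathrm{diam}(\mathrm{Cay}(G,X))+1$ from the edge labels of a geodesic realizing the diameter together with the inverse of its endpoint, and show it is an atom by rotating any hypothetical factor containing that inverse to produce a too-short expression of the endpoint in the generators. Your version merely phrases the diameter via the word length $\ell_X$ and is slightly more careful about multiplicities and the degenerate cases, but the underlying argument is the paper's.
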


\begin{proof}
Let $1=g_0,g_1,\ldots,g_d \in G$ be a sequence of vertices on a non-self-crossing path  of maximal length in  $\mathrm{Cay}(G,X)$, so that $d= \mathrm{diam}(\mathrm{Cay}(G,X))$.  Now set $s_i = g_{i-1}^{-1}g_{i} $ for all $i= 1, \ldots, d$  and consider the sequence $S= s_1 \bdot \ldots \bdot s_d \bdot g_d^{-1}$. By construction $S$ is a product-one sequence and we claim that it is an atom in $\mathcal B(X \cup \{ g_d^{-1}\})$. For otherwise $S= A\bdot B$ for some product-one sequences $A, B$ where $g_d^{-1} \in A$, and then we can order the elements of $A$ in such a way that we obtain an equality $s_{i_1} \cdots s_{i_r} g_d^{-1} =1$ where $r<d$.  But since $s_i \in X$ for all $i$, this means that we have in $\mathrm{Cay}(G,X)$ a path of length $r$ from $1$ to $g_d$, a contradiction. 
\end{proof}

\begin{problem}
Is there any relation between $ 
\max_{X\subseteq G}\{\mathrm{diam}(\mathrm{Cay}(G,X))\}$ and $\mathsf d(G)$? 
More concretely, based on a little computer experimentation we raise the following question: 
does the inequality $\mathsf d(G) \ge \mathrm{diam}(\mathrm{Cay}(G,X))$ hold in general?  
\end{problem}

\section{Computing the Davenport constants}\label{sec:algorithms}
We take over  the following notations from Section~\ref{sec:monoid}: let $\mathcal F =\mathcal F(G)$ be the monoid  of all sequences of elements in $G$, let $\mathcal M \subseteq \mathcal F$ be the set of product-one free sequences and  $\mathcal A \subseteq \mathcal F$ the set of atoms, i.e. all product-one sequences which cannot be written as the product of two non-empty product-one subsequences. For any $k\ge 1$ let $\mathcal F_k \subseteq \mathcal F$ denote the set of all sequences of length $k$ and set $\mathcal M_k = \mathcal M \cap \mathcal F_k$, $\mathcal A_k = \mathcal A \cap \mathcal F_k$.
Moreover, 
$\mathcal M_0=\{1_{\mathcal F}\}$ 
consists of the empty sequence. 

We propose two algorithms: one for enumerating all product-one free sequences and another one for computing all the atoms 
and thereby establishing the values of $\mathsf d(G)$ and $\mathsf D(G)$, respectively.  We have implemented these algorithms in the GAP computer algebra system (see \cite{GAP4}) and used them to compute and/or verify the small and large Davenport constants given in Section~\ref{sec:table}.

The algorithms construct the set $\mathcal{M}$ (respectively $\mathcal{A}$) successively, as a union of the sets $\mathcal{M}_k$ (respectively $\mathcal{A}_k$). During the construction of the sets $\mathcal M_k$  or $\mathcal A_k$ we are not testing all elements of $\mathcal F_k$ for being product-one free or atomic (as this task would be practically unfeasible), but we limit the scope of our search to a much smaller subset defined in what follows. 

A sequence $T$ is called \emph{a splitting} of a sequence $S$  if  $T = S \bdot g^{[-1]} \bdot x \bdot y$ for some $g\in \supp(S)$ and $x,y \in G$ such that $xy=g$. We denote by $S \prec T$ the fact that $T$ is a splitting of $S$. For any subset $\mathcal S \subset \mathcal F(G)$ we set $\gamma(\mathcal S) := \{ T \in \mathcal F(G) \colon S\prec T \text{ for some } S \in \mathcal S  \}$.  

\begin{lemma}\label{splitting}
For any $k \ge 1$ we have $\mathcal M_{k+1} \subseteq \gamma(\mathcal M_k)$ and $\mathcal A_{k+1} \subseteq \gamma(\mathcal A_k)$. 
\end{lemma}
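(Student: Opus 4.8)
The plan is to prove both inclusions by the same mechanism: given a sequence $T$ of length $k+1$ lying in $\mathcal{M}_{k+1}$ (respectively $\mathcal{A}_{k+1}$), I want to exhibit a sequence $S$ of length $k$ in $\mathcal{M}_k$ (respectively $\mathcal{A}_k$) with $S \prec T$. The natural candidate for $S$ is obtained by ``merging'' two terms of $T$ into their product: pick any two terms $x,y \in \mathrm{supp}(T)$ (counted with multiplicity, so $x\bdot y \mid T$), set $g := xy \in G$, and let $S := T \bdot x^{[-1]} \bdot y^{[-1]} \bdot g$. Then $|S| = k$ and by construction $T = S \bdot g^{[-1]} \bdot x \bdot y$, so $S \prec T$. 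The whole content of the lemma is that for a suitable choice of the pair $x,y$ the merged sequence $S$ inherits the relevant property (product-one freeness, respectively atomicity).

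For the first inclusion, suppose $T \in \mathcal{M}_{k+1}$ and pick \emph{any} two terms $x\bdot y \mid T$ (possible since $k+1 \ge 2$), forming $S$ as above. I claim $S$ is product-one free. Indeed, any non-trivial subsequence $S' \mid S$ either avoids the merged term $g$, in which case $S' \mid T$ as well and $\pi(S') \subseteq \Pi(T) \not\ni 1_G$; or $g \mid S'$, say $S' = g \bdot S''$ with $S'' \mid T \bdot x^{[-1]} \bdot y^{[-1]}$, and then every product $g \cdot (\text{ordered product of } S'') = x \cdot y \cdot (\text{same ordered product of }S'')$ is realized as a product over the subsequence $x \bdot y \bdot S'' \mid T$; since this latter subsequence is non-trivial, its products miss $1_G$, hence so do the products of $S'$. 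Thus $1_G \notin \Pi(S)$, i.e.\ $S \in \mathcal{M}_k$, and $T \in \gamma(\mathcal{M}_k)$.

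For the second inclusion, suppose $T \in \mathcal{A}_{k+1}$. Here the choice of the pair $x,y$ matters: not every merge of an atom is an atom. The key observation is that $T$, being a product-one sequence, admits an ordering $g_1 \bdot \dots \bdot g_{k+1}$ of its terms with $g_1 g_2 \cdots g_{k+1} = 1_G$; take $x := g_1$, $y := g_2$, $g := g_1 g_2$, and $S := g \bdot g_3 \bdot \dots \bdot g_{k+1}$, which is a product-one sequence of length $k$ with $S \prec T$. It remains to check $S$ is an atom. If instead $S = A \bdot B$ with $A,B$ non-empty product-one sequences, I distinguish whether the merged term $g$ lies in $A$ or $B$; say $g \mid A$. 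Then $A \bdot g^{[-1]} \bdot x \bdot y \mid T$ and $B \mid T$ are complementary subsequences of $T$, both product-one ($B$ unchanged; $A \bdot g^{[-1]} \bdot x \bdot y$ has a product-one ordering because $1_G \in \pi(A)$ forces $g \cdot(\cdots) = 1_G$ for some ordering, hence $x\cdot y\cdot(\cdots) = 1_G$ with the same tail). Since $B$ is non-empty and $A \bdot g^{[-1]} \bdot x \bdot y$ has length $|A|+1 < |S|+1 = k+1 = |T|$, this is a non-trivial factorization of $T$ into two product-one sequences, contradicting $T \in \mathcal{A}$. Hence $S \in \mathcal{A}_k$ and $T \in \gamma(\mathcal{A}_k)$.

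The only genuinely delicate point is the atomicity step: one must be careful that the decomposition $S = A \bdot B$ is transported back to a genuine \emph{non-trivial} decomposition of $T$ (both parts non-empty, neither equal to all of $T$), which is why it is essential that the merge replaces two terms by one coming from a product-one ordering of $T$, so that the ``un-merged'' piece $A \bdot g^{[-1]} \bdot x \bdot y$ is still product-one and strictly shorter than $T$. The product-one-free case is routine once the subsequence-product bookkeeping above is made precise; there the flexibility in choosing $x,y$ is not needed. I would present the product-one bookkeeping ($\pi$ of $x\bdot y\bdot S''$ versus $\pi$ of $g \bdot S''$) once as a small remark and reuse it in both parts.
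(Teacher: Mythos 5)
Your proof is correct and takes essentially the same route as the paper, whose entire argument is the one-liner that merging two terms $g\bdot h$ of $T$ into $gh$ ``obviously'' yields a product-one free (respectively atomic) sequence $S\prec T$ of length $k$. The extra care you take in the atomic case --- merging two terms that are adjacent in a product-one ordering of $T$, so that $S$ is itself product-one and any factorization $S=A\bdot B$ un-merges to a nontrivial factorization of $T$ --- is precisely the point the paper leaves implicit (with an arbitrary pair the merged sequence need not even be product-one in a non-abelian group), so your write-up is, if anything, more complete.
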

\begin{proof}
If $T=g\bdot h\bdot T'$ is a product-one free (respectively atomic) sequence of length $k+1\ge 2$, then  
obviously the sequence 
$S=(gh)\bdot T'$ has length $k$ and is  product-one free (respectively atomic) too. Moreover, 
$S \prec T$. 
\end{proof}

\begin{remark}\label{happyend}
Observe that by this lemma if $\mathcal M_k = \emptyset$ for some $k \ge 1$ then $\mathcal M_l = \emptyset$  for every $l >k$. So the smallest $k$ such that $\mathcal M_k = \emptyset$ equals  $ \mathsf d(G)+1$. Similarly the smallest $k$ such that $\mathcal A_k = \emptyset$ is equal to $\mathsf D(G)+1$.
\end{remark}

Another idea used for reducing the complexity of our algorithms was to enumerate the elements of $\mathcal M_k$ and $\mathcal A_k$ only ``up to similarity''. 
Recall from Section~\ref{sec:heisenberg} that two sequences $S$ and $T$ are said to be similar if there is an  automorphism $\alpha \in \Aut(G)$ such that $S = \alpha(T)$.

\begin{lemma}\label{reps}
Let $\mathcal R_k$ be a set of $\Aut(G)$-orbit representatives over $\mathcal M_k$ (respectively $\mathcal A_k$). 
Then there is a system of $\Aut(G)$-orbit representatives $\mathcal R_{k+1}$ over $\mathcal M_{k+1}$ (respectively $\mathcal A_{k+1}$) such that $\mathcal R_{k+1} \subseteq \gamma(\mathcal R_k)$. 
\end{lemma}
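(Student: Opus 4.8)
The plan is to combine Lemma~\ref{splitting} with the observation that the splitting operation $\gamma$ is compatible with the action of $\Aut(G)$, and then to build $\mathcal R_{k+1}$ orbit by orbit. First I would record the equivariance: if $S\prec T$ and $\alpha\in\Aut(G)$, then $\alpha(S)\prec\alpha(T)$, since a splitting $T=S\bdot g^{[-1]}\bdot x\bdot y$ with $xy=g$ is sent to $\alpha(T)=\alpha(S)\bdot\alpha(g)^{[-1]}\bdot\alpha(x)\bdot\alpha(y)$ with $\alpha(x)\alpha(y)=\alpha(g)$; hence $\gamma(\alpha(\mathcal S))=\alpha(\gamma(\mathcal S))$, and in particular $\gamma(\mathcal S)$ is a union of $\Aut(G)$-orbits whenever $\mathcal S$ is.

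Next, the construction. Take $\mathcal M_{k+1}$ (resp. $\mathcal A_{k+1}$); by Lemma~\ref{splitting} it is contained in $\gamma(\mathcal M_k)$ (resp. $\gamma(\mathcal A_k)$). Since $\mathcal R_k$ meets every $\Aut(G)$-orbit in $\mathcal M_k$, we have $\mathcal M_k=\bigcup_{\alpha\in\Aut(G)}\alpha(\mathcal R_k)$, and therefore, using the equivariance just noted,
\[
\gamma(\mathcal M_k)=\gamma\Big(\bigcup_{\alpha}\alpha(\mathcal R_k)\Big)=\bigcup_{\alpha}\gamma(\alpha(\mathcal R_k))=\bigcup_{\alpha}\alpha(\gamma(\mathcal R_k)).
\]
Thus $\gamma(\mathcal R_k)$ is an $\Aut(G)$-stable ``transversal-generating'' set: every orbit that meets $\gamma(\mathcal M_k)$ already meets $\gamma(\mathcal R_k)$. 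Since $\mathcal M_{k+1}\subseteq\gamma(\mathcal M_k)$ and $\mathcal M_{k+1}$ is itself $\Aut(G)$-stable, every $\Aut(G)$-orbit contained in $\mathcal M_{k+1}$ has non-empty intersection with $\gamma(\mathcal R_k)$. Now for each such orbit $\mathcal O$ choose one representative lying in $\mathcal O\cap\gamma(\mathcal R_k)$, and let $\mathcal R_{k+1}$ be the set of these chosen representatives. By construction $\mathcal R_{k+1}$ is a full system of $\Aut(G)$-orbit representatives over $\mathcal M_{k+1}$, and $\mathcal R_{k+1}\subseteq\gamma(\mathcal R_k)$ as required. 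The identical argument with $\mathcal A$ in place of $\mathcal M$ handles the atomic case, since Lemma~\ref{splitting} and the equivariance of $\gamma$ apply verbatim.

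The only point that needs a word of care — and it is the main (minor) obstacle — is verifying that $\mathcal M_{k+1}$ and $\mathcal A_{k+1}$ are genuinely $\Aut(G)$-stable, i.e. that being product-one free, resp. being an atom, is preserved by automorphisms; this is immediate because an automorphism induces a monoid automorphism of $\mathcal F(G)$ carrying $\mathcal B(G)$ onto itself (it permutes the products $\pi(S)$ and in particular fixes $1_G$), hence carries atoms to atoms and product-one free sequences to product-one free sequences. Everything else is a formal manipulation of the orbit decomposition, so no serious difficulty is expected.
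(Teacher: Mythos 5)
Your proof is correct and follows essentially the same route as the paper: both rest on the equivariance of the splitting relation under $\Aut(G)$ together with Lemma~\ref{splitting}, and then pick one representative per orbit inside $\gamma(\mathcal R_k)$. The only difference is cosmetic — you phrase the argument at the level of sets ($\gamma(\alpha(\mathcal S))=\alpha(\gamma(\mathcal S))$ and the orbit decomposition of $\mathcal M_k$), while the paper argues pointwise by taking $S\prec T$ and moving $S$ into $\mathcal R_k$ by an automorphism — and your explicit check that $\mathcal M_{k+1}$ and $\mathcal A_{k+1}$ are $\Aut(G)$-stable is a point the paper leaves implicit.
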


\begin{proof}
Observe that the relation $\prec$ is compatible with the $\Aut(G)$-action in the sense that $S \prec T$ holds if and only if $\alpha(S) \prec \alpha(T)$ for some $\alpha \in \Aut(G)$ if and only if $\alpha(S) \prec \alpha(T)$ for each $\alpha \in \Aut(G)$. 

Now we prove that for any  sequence $T \in \mathcal M_{k+1}$ its $\Aut(G)$-orbit $\Orb(T)$ has a non-empty intersection with $\gamma(\mathcal R_k)$. 
Take an arbitrary sequence $S\prec T$ so that $S \in \mathcal M_k$. 
As $\mathcal R_k\subseteq  \mathcal M_k$ is a complete set of representatives, there is some $R \in \mathcal R_k$ such that $R =\alpha (S)$ for some $\alpha \in \Aut(G)$.
Hence by the compatibility of $\prec$ it follows that $ R = \alpha(S) \prec \alpha(T) $. As a result $\alpha(T) \in \Orb(T) \cap \gamma(\mathcal R_k)$ and we are done. The proof is similar for the case of $\mathcal A_k$.
\end{proof}

\subsection{Algorithm for computing the small Davenport constant} \label{sec:alg_small_dav} 

\renewlist{itemize}{itemize}{20} \setlist[itemize]{label=$\cdot$}

Let us record the following obvious statement: 

\begin{lemma}\label{oneprodfreetest}
A sequence  $R$ of length $k\ge 1$ 
is product-one free if and only if  $1\notin \pi(R)$ and for all 
$g\in \supp(R)$ we have that $R\bdot g^{[-1]}\in \mathcal{M}_{k-1}$.
\end{lemma}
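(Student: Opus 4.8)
The statement to prove is Lemma~\ref{oneprodfreetest}:

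\emph{A sequence $R$ of length $k\ge 1$ is product-one free if and only if $1\notin \pi(R)$ and for all $g\in\supp(R)$ we have that $R\bdot g^{[-1]}\in\mathcal M_{k-1}$.}

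This is essentially a bookkeeping lemma unfolding the definition of $\Pi(R)$ in terms of the proper subsequences obtained by deleting one element, so the proof is short and the ``main obstacle'' is really just being careful about the edge cases (empty subsequences, $k=1$).

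The plan is to argue by double implication, working directly from the definition
$
\Pi(R) = \bigcup_{1_{\mathcal F(G)}\ne T\mid R} \pi(T)
$
and the definition that $R$ is product-one free iff $1_G\notin\Pi(R)$.

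\medskip
\textbf{($\Rightarrow$)} Suppose $R$ is product-one free. First, taking $T=R$ in the union defining $\Pi(R)$ gives $\pi(R)\subseteq\Pi(R)$, so $1\notin\pi(R)$. Next fix $g\in\supp(R)$ and set $R'=R\bdot g^{[-1]}$, a sequence of length $k-1$. For any non-trivial subsequence $T\mid R'$ we also have $T\mid R$, hence $\pi(T)\subseteq\Pi(R)$; together with $1\notin\pi(R')$ (as $\pi(R')\subseteq\Pi(R)$ when $k\ge 2$, and $\pi(R')=\{1\}$ would contradict... wait, when $k=1$, $R'$ is the empty sequence and $\pi(R')=\{1_G\}$ by convention — I need to handle this). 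Actually if $k=1$ then $\supp(R)=\{g\}$ for the single element $g$ of $R$, $R'$ is the trivial sequence of length $0$, and $\mathcal M_0=\{1_{\mathcal F}\}$, so indeed $R'\in\mathcal M_0$ trivially. So the condition ``$R'\in\mathcal M_{k-1}$'' is automatically satisfied when $k=1$, and the only content of the right-hand side in that case is $1\notin\pi(R)$, which is exactly product-one freeness of a length-one sequence. For $k\ge 2$: $R'$ has positive length, every non-trivial $T\mid R'$ satisfies $\pi(T)\subseteq\Pi(R)\not\ni 1$, so $1\notin\Pi(R')$, i.e. $R'\in\mathcal M_{k-1}$.

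\medskip
\textbf{($\Leftarrow$)} Suppose $1\notin\pi(R)$ and $R\bdot g^{[-1]}\in\mathcal M_{k-1}$ for every $g\in\supp(R)$. Let $T\mid R$ be an arbitrary non-trivial subsequence; I must show $1\notin\pi(T)$. If $T=R$ this is the hypothesis $1\notin\pi(R)$. Otherwise $T$ is a proper non-trivial subsequence, so there is some $g\in\supp(R)$ with $T\mid R\bdot g^{[-1]}=:R'$, and $T$ is non-trivial, so $\pi(T)\subseteq\Pi(R')$; since $R'\in\mathcal M_{k-1}$ we have $1\notin\Pi(R')\supseteq\pi(T)$, hence $1\notin\pi(T)$. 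Taking the union over all non-trivial $T\mid R$ gives $1\notin\Pi(R)$, i.e. $R$ is product-one free.

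\medskip
I expect no genuine difficulty here; the only thing to be careful about is the degenerate case $k=1$ (so that $R\bdot g^{[-1]}$ is the empty sequence, which lies in $\mathcal M_0$ by the stated convention, making the quantified condition vacuously true), and the observation that a proper non-trivial subsequence $T$ of $R$ is obtained by deleting at least one element and hence divides $R\bdot g^{[-1]}$ for a suitable $g\in\supp(R)$. Both points are immediate from the definitions in Section~\ref{sec:monoid} and Section~\ref{sec:algorithms}.
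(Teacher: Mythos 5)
Your argument is correct: it is exactly the routine unfolding of the definitions of $\Pi(R)$ and of product-one freeness that the paper has in mind when it records this lemma as an ``obvious statement'' without proof, including the right treatment of the $k=1$ case via the convention $\mathcal M_0=\{1_{\mathcal F}\}$. The only blemish is the leftover ``wait\dots'' digression in the forward direction, which you should delete since the surrounding text already resolves that edge case correctly.
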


\medskip{}
\begin{algorithm}

	\medskip{}
	
	\SetKw{LogicAnd}{and} \SetKw{LogicOr}{or}
	\SetKwInOut{Input}{input} \SetKwInOut{Output}{output}
	
	\Input{$G$}
	\Output{$\mathsf d(G)$ }
	
	\BlankLine
	\BlankLine
	$\mathcal{M}_0 \leftarrow \{1_{\mathcal{F}_G}\}$\;
    $\pi(1_{\mathcal{F}_G}) \leftarrow \{1_G\}$\;
	$\mathcal{M}_1 \leftarrow G\setminus\{1_G\}$\label{line3}\;
    $\mathcal{R}_{1}\leftarrow \{g_\alpha : \alpha \in \mathcal{M}_1/\Aut(G)\}$\label{line4}\;
	\lForAll{$g\in \mathcal{M}_1$}{$\pi(g)\leftarrow \{g\}$}\label{line5}
	$k\leftarrow 1$\;
	\Repeat{$\mathcal{M}_k=\emptyset$}{
		$k \leftarrow k+1$\;
    	$\mathcal{M}_k\leftarrow \emptyset$\;
        $\mathcal{R}_k\leftarrow \emptyset$\;
		\ForAll{$S\in \mathcal{R}_{k-1}$}{
			\ForAll{$g\in \supp(S)$}{
				\ForAll{$x\in G\setminus \{1_G, g\}$}{
                	$R \leftarrow S\bdot g^{[-1]}\bdot x \bdot (x^{-1}g)$\label{line16}\;
					\If{$R\notin \mathcal{M}_k$ \LogicAnd $\forall g \in \supp(R): R\bdot g^{[-1]}\in \mathcal{M}_{k-1}$\label{line15}}{
                        
                            $\pi(R)\leftarrow\bigcup_{R_1\bdot R_2 = R\bdot x^{[-1]}}\left(\pi(R_1)\,x\,\pi(R_2)\right)$\label{line17}\;
                            \If{$1_G\notin \pi(R)$\label{line18}}{
                                $\mathcal{R}_k\leftarrow \mathcal{R}_k\cup \{R\}$\label{line19}\;
                                $O_R\leftarrow \{R\}$\;
                                \ForAll{$\alpha \in \Aut(G)$}{
                                	$R'\leftarrow \alpha(R)$\;
                                    \If{$R'\notin O_R$}{
                                    	$O_R\leftarrow O_R\cup \{R'\}$\;
                          				$\pi(R')\leftarrow \alpha\left(\pi(R)\right)$\label{line24}\;
                                    }
                                }
                                $\mathcal{M}_k\leftarrow \mathcal{M}_k\cup O_R$\label{line20}\;
                            }
                        
					}
				}
			}
		}
	}\label{line33}
	
	\Return $k-1$\;
	
	\BlankLine
	
\caption{SmallDavenport($G$) \label{smalldavenportalgorithm}} 
\end{algorithm}
\medskip{}

Given an arbitrary finite group $G$, Algorithm \ref{smalldavenportalgorithm} computes the small Davenport constant. The set of all product-one free sequences of $G$ may be obtained as $\mathcal{M}=\bigcup_k \mathcal{M}_k$ after the repeat-until loop has terminated (right after line \ref{line33}).

In lines \ref{line3}--\ref{line5} of the algorithm we have identified the sequences of length one with the respective group elements. Hence the set $\mathcal{M}_1$ of product-one free sequences having length one consists of all the group elements except the identity (line \ref{line3}), from which we choose a set $\mathcal{R}_1$ of $\Aut(G)$-orbit representatives (line \ref{line4}). For every length-one sequence we store the set of products (line \ref{line5}). These initial data having been established, the algorithm proceeds with computing the sets $\mathcal{M}_k$ and $\mathcal{R}_k$ for all increasing $k>1$, until it finds the first value $k$ for which $\mathcal{M}_k=\emptyset $. The finiteness of the algorithm is guaranteed by Lemma \ref{splitting} (together with Remark \ref{happyend}). 

Observe that inside the for-loops in line \ref{line16} the variable $R$ will take successively all the values from the set $\gamma(\mathcal{R}_{k-1})$, therefore it is guaranteed that for $k>1$ all the possible candidate sequences will be tested for being product-one free and the set $\mathcal{R}_k$ (from Lemma \ref{reps}) is built correctly. 

Before testing the sequence $R$ for being product-one free (using Lemma \ref{oneprodfreetest}), in line \ref{line15} it is checked first whether the sequence $R$ is among the already computed product-one free sequences of length $k$ (via the condition $R\notin \mathcal{M}_k$). Elimination of duplicates is essential for correctly building the set $\mathcal{R}_k$ of representatives (so that in line \ref{line19} we compute only the set $\mathcal{R}_k \subseteq \gamma(\mathcal{R}_{k-1})$ and not one of its supersets). If $R$ is a new sequence (in the sense that it is not similar to any of the already found product-one free sequences), it gets tested using the conditions from Lemma \ref{oneprodfreetest} (the second condition from line \ref{line15} and the condition from line \ref{line18}). $\pi(R)$ is computed (in line \ref{line17}) using the following observation: for any sequence $R$ and fixed $x\in R$, we can write 
\[\pi(R) = \bigcup_{R_1 \bdot R_2 = R \bdot x^{[-1]}} \pi(R_1)\,x\,\pi(R_2).\] 
If $R$ is found to be product-one free, it is also added as a new representative of the 
$\Aut(G)$-orbits (line \ref{line19}). The orbit itself is computed successively (and stored in the variable $O_R$) by evaluating all the automorphisms $\alpha$ on the sequence $R$. In line \ref{line20} the set $O_R$ will be the same as the set $\Orb(R)$ of $\Aut(G)$ orbits of $R$. 

For every new sequence $R'$ (new in the sense of not being in the already computed part of the orbit), we also have to compute and store the set $\pi(R')$ by acting with the proper automorphism $\alpha$ on the set $\pi(R)$ (line \ref{line24}). Hence the algorithm stores the product sets for all the enumerated product-one free sequences. This is essential for a relatively fast computation performed in line \ref{line17}. 

The set $\mathcal{M}_k$ gets computed in line \ref{line20} as a union of $\Aut(G)$ orbits of the representatives. In the last step, the value $k-1$ is returned: this is the small Davenport constant of $G$, as explained in Remark \ref{happyend}.

\subsection{Algorithm for computing the large Davenport constant}\label{subsec:alg_large_dav}

Given an arbitrary finite group $G$, Algorithm \ref{largedavenportalgorithm} computes the large Davenport constant. The set of all atoms of $G$ may be obtained as $\mathcal{A}=\bigcup_k \mathcal{A}_k$ after the repeat-until loop has terminated (right after line \ref{Line19}).

The algorithm begins with loading the sets $\mathcal{A}_1$ and $\mathcal{R}_1$ with the sequence consisting only of the identity element of the group. In line \ref{Line11} the variable $S'$ will take successively all the values from the set $\gamma(\mathcal{R}_{k-1})$, therefore it is guaranteed that for $k>1$ all the possible candidate sequences will be tested for being atoms and the set $\mathcal{A}_k$ (from Lemma \ref{reps}) is built correctly. In line \ref{Line10} the variable $h$ is prevented from taking the values $1_G$ or $g$, since this would bring in the identity element in the support of $S'$.

The algorithm first checks whether a candidate sequence $S'$ is among the already computed atoms of length $k$ via the condition $S' \notin \mathcal{A}_k$ from line \ref{Line12}, ensuring that $S'$ is a new atom (in the sense that it is not similar to any of the already found atoms of length $k$). Checking whether a sequence $S'$ is an atom is performed by the second condition in line \ref{Line12}: by definition the product-one sequence $S'$ is an atom if it cannot be written as a product of shorter atoms. The predicate $\mathbf{D}(S',k)$ in line \ref{Line12} is true if and only if the product-one sequence $S'$ of length $k$ is not an atom and may be given in the following way:  
\begin{align*}\mathbf{D}(S',k)\iff&\exists A \in \bigcup_{i=2}^{\left\lfloor \frac{k}{2} \right\rfloor} \mathcal{A}_i \text{ with }\ A\mid S'\text{ such that} \\  &  S'\bdot A^{[-1]}\in \mathcal{A}_{k-|A|} \textrm{ or } \mathbf{D}(S'\bdot A^{[-1]},k-|A|).\end{align*}
It may be implemented using a recursive function, bailing out with a positive answer as soon as it finds a decomposition of $S'$ into atoms.

In line \ref{Line13} the newly found atom $S'$ gets added to the set $\mathcal{R}_k$ of representatives and the set $\mathcal{A}_k$ of atoms is completed with the $\Aut(G)$ orbit of $S'$, denoted by $\Orb(S')$ (in line \ref{Line14}). In the last step, the value $k-1$ is returned as the large Davenport constant of $G$, as explained in Remark \ref{happyend}.

Although the check is performed only for a system of representatives of the $\Aut(G)$ orbits of atoms (for all $S'\in \mathcal{R}_{k-1}$), the ``decomposability test'' described by the predicate $\mathbf{D}(S',k)$ uses the whole set of already computed atoms $\bigcup_{i=2}^{k-1}\mathcal{A}_i$, which is the main performance bottleneck of the algorithm.

\medskip{}
\begin{algorithm}[H]

	\medskip{}
	
	\SetKw{LogicAnd}{and} \SetKw{LogicOr}{or} \SetKw{LogicNot}{not}
	\SetKwInOut{Input}{input} \SetKwInOut{Output}{output}
	
	\Input{$G$ }
	\Output{$\mathsf D(G)$ }
	
	\BlankLine
	\BlankLine
	
	$\mathcal{A}_{1}\leftarrow \{1_G\}$\;
	$\mathcal{R}_{1}\leftarrow \{1_G\}$\;
	$k\leftarrow 1$\;
	
	\Repeat{$\mathcal{A}_k=\emptyset$}{
    	$k\leftarrow k+1$\;
		$\mathcal{A}_k\leftarrow\emptyset$\;
		$\mathcal{R}_k\leftarrow \emptyset$\;
		\ForAll{$S\in\mathcal{R}_{k-1}$}{
			\ForAll{$g\in \supp(S)$}{
				\ForAll{$x\in G\setminus \{1_G, g\}$\label{Line10}}{
                    $S'\leftarrow S\bdot g^{[-1]} \bdot x \bdot (x^{-1}g)$\label{Line11}\;
					\If{$S' \notin \mathcal{A}_k$ \LogicAnd \LogicNot $\mathbf{D}(S',k)$\label{Line12}}{
						$\mathcal{R}_{k}\leftarrow \mathcal{R}_{k}\cup\{S'\}$\label{Line13}\;
						$\mathcal{A}_{k}\leftarrow \mathcal{A}_{k}\cup \Orb(S')$\label{Line14}\;
					}
				}
			}
		}
	}\label{Line19}
	
	\Return $k-1$\;
	
	\BlankLine
	
\caption{LargeDavenport($G$)} \label{largedavenportalgorithm}
\end{algorithm}
\medskip{}

\begin{remark}
We have used a GAP implementation of Algorithm \ref{smalldavenportalgorithm} and \ref{largedavenportalgorithm} to compute the small and large Davenport constants from the table in Section~\ref{sec:table} in case of those groups for which a formula for these constants in not readily available (see Subsection~\ref{subsec:known-davenport}).

Although parallelization is easily achievable in case of both algorithms, a ``single-threaded'' implementation was found to fit our present requirements, using a personal computer clocked at 2 GHz.

The small Davenport constants from Section~\ref{sec:table} have been obtained in under a minute worth of computation, with the single exception of the group $M_{27} = C_9 \rtimes C_3$, for which $\mathsf{d}(M_{27})=10$ has been obtained in about 7 minutes. This group was found to have a number of $102212$ product-one free sequences, grouped in $1987$ equivalence classes. In contrast, the second most ``difficult'' group (among the non-abelian groups of order less than $32$ that contain no cyclic subgroup of index two) was $H_{27}=UT_3(\mathbb{F}_3)$ for which our implementation of Algorithm~\ref{smalldavenportalgorithm} yielded the following data in about 40 seconds: $\mathsf{d}(H_{27})=6$ with $69026$ product-one free sequences partitioned by the similarity relation into $187$ equivalence classes. In case of the other groups the running times range from below one second up to half a minute.

Computation of the large Davenport constant using Algorithm \ref{largedavenportalgorithm} is more demanding. The most time-consuming group from the list was $SL_2(\mathbb{F}_3) = \tilde{A}_4$, with the large Davenport constant being 13, housing $499695$ atoms partitioned in $21033$ equivalence classes. In the case of this group the computation took about $40$ minutes. Excepting $C_3 \rtimes Dih_8 = (C_6 \times C_2) \rtimes_{\gamma} C_2$ and $S_4$ (computation taking about 24 minutes for each of them), the running times hardly reached one minute. For example in the interesting case of $H_{27}$, the large Davenport constant $\mathsf{D}(H_{27})=8$ was computed in $19$ seconds. This group has $108827$ atoms grouped in only $340$ equivalence classes (this explaining the relative quick computation within this group).
\end{remark}


\begin{thebibliography}{2}

\bibitem{bhowmik-puchta} 
G. Bhowmik and  J.-C. Schlage-Puchta,  
Davenport's constant for groups of the form $\mathbb{Z}_3\oplus \mathbb{Z}_3\oplus\mathbb{Z}_{3d}$,  Additive combinatorics, 307–326, 
CRM Proc. Lecture Notes, 43, Amer. Math. Soc., Providence, RI, 2007. 

\bibitem{chen-savchev} 
F. Chen and S. Savchev, 
Long minimal zero-sum sequences in the groups $C^{r-1}_2\oplus C_{2k}$,  
Integers 14 (2014), Paper No. A 23, 29 pp. 

\bibitem{chevalley} C. Chevalley, Invariants of finite groups generated by reflections, Amer. J. Math. 77 (1955), 778-782. 


\bibitem{Cz1} K. Cziszter, The Noether number of the non-abelian group of order
$3p$, Periodica  Math. Hungarica 68 (2014), 150-159.

\bibitem{Cz2} K. Cziszter, On the Noether number of $p$-groups,  arXiv:1604.01938 


\bibitem{CzD:3}
K. Cziszter and M. Domokos,   On the generalized Davenport constant and the Noether number, Central European Journal of Mathematics 11 (2013), 1605-1615. 

\bibitem{CzD:1} K. Cziszter and M. Domokos, Groups with large Noether bound,  Ann. Inst. Fourier (Grenoble) 64, no. 3 (2014), 909-944. 

\bibitem{CzD:2}
K. Cziszter and M. Domokos, The Noether number for the groups with a cyclic subgroup of index two,
Journal of Algebra 399 (2014), 546-560. 

\bibitem{CzD:4} 
K. Cziszter and M. Domokos, 
Lower bounds on the Noether number, 
Transform. Groups, 2018, DOI 10.1007/s00031-018-9479-4. 


\bibitem{CzDG} K. Cziszter, M. Domokos, and A. Geroldinger, 
The interplay of invariant theory with multiplicative ideal theory and with arithmetic combinatorics, in: Scott T. Chapman, M. Fontana, A. Geroldinger, B.Olberding (Eds.), Multiplicative Ideal Theory and Factorization Theory, Springer-Verlag, 2016, pp. 43-95.


\bibitem{delorme}
Ch. Delorme, O.~Ordaz, and D.~Quiroz, 
Some remarks on {D}avenport constant, 
Discrete Mathematics 237 (2001),119-128.

\bibitem{draisma-kemper-wehlau} J. Draisma, G. Kemper, and D.Wehlau, Polarization of separating invariants, Canad. J. Math. Soc. 60 (2008), 556-571.

\bibitem{emde}
P. van Emde Boas and D. Kruyswijk. A combinatorial problem on finite abelian groups. III. Math. Centrum Amsterdam Afd. Zuivere Wisk., 1969(ZW-008):32 pp., 1969.

\bibitem{fleischmann} P. Fleischmann, The Noether bound in invariant theory of finite groups, Adv. Math. 156 (2000), 23-32. 

\bibitem{fogarty} J. Fogarty, On Noether's bound for polynomial invariants of a finite group, Electron. Res. Announc. Amer. Math. Soc. 7 (2001), 5-7. 
 
\bibitem{GAP4}
  The GAP Group, \emph{GAP -- Groups, Algorithms, and Programming, 
  Version 4.8.6}; 
  2016, (http://www.gap-system.org).

\bibitem{geroldinger} 
A. Geroldinger,  
Additive group theory and non-unique factorizations,  Combinatorial number theory and additive group theory, 1-86, 
Adv. Courses Math. CRM Barcelona, Birkh\"auser Verlag, Basel, 2009. 

\bibitem{GeGryn} 
A. Geroldinger and D. J. Grynkiewicz, The large {D}avenport constant {I:}
  {G}roups with a cyclic index $2$ subgroup, J. Pure Appl. Algebra
  217 (2013), 863-885.


\bibitem{Ge-HK06a}
A.~Geroldinger and F.~Halter-Koch, Non-{U}nique {F}actorizations.
  {A}lgebraic, {C}ombinatorial and {A}nalytic {T}heory, Pure and Applied
  Mathematics, vol. 278, Chapman \& Hall/CRC, 2006.

\bibitem{Gryn} D. J. Grynkiewicz, The large {D}avenport constant {II:} {G}eneral upper
  bounds, J. Pure Appl. Algebra 217 (2013), 222-2246.

\bibitem{Gr13a}
D. J. Grynkiewicz, \emph{Structural {A}dditive {T}heory}, Developments in Mathematics,
  Springer, 2013.

\bibitem{halter-koch}
F.~Halter-Koch, A generalization of {D}avenport's constant and its
  arithmetical applications, Colloquium Mathematicum \textbf{LXIII} (1992),
  203--210.

\bibitem{hunziker}
M. Hunziker, 
Classical invariant theory for finite reflection groups, 
Transformation groups 2 (1997), 145-161.

\bibitem{humphreys}
J.F. Humphreys, A course in group theory, Oxford University Press, 1996.


\bibitem{knop} F. Knop, On Noether's and Weyl's bound in positive characteristic, 
in "Invariant Theory in All Characteristics", (Ed.: H. E. A. Eddy Campbell and D. L. Wehlau), 
CRM Proceedings and Lecture Notes 35, Amer. Math. Soc., Providence, Rhode Island, pp. 175-188, 2004. 

\bibitem{noether} E. Noether, Der Endlichkeitssatz der Invarianten endlicher Gruppen, 
Math. Ann. 77 (1916), 89-92. 


\bibitem{noether:1926} E. Noether, 
Der Endlichkeitssatz der Invarianten endlicher linearer Gruppen der Charakteristik $p$, 
Nachr. Ges. Wiss. G\"ottingen (1926), 28-35. 

\bibitem{olson-white} J. E. Olson and  E. T. White,
Sums from a sequence of group elements, in: H. Zassenhaus (Ed.), Number Theory and Algebra, Academic Press, 1977,
pp. 215-222.

\bibitem{richman}
D. R. Richman, 
Invariants of finite groups over fields of characteristic $p$, 
Adv. Math. 124 (1996), 25-48.


\bibitem{schmid}
B.~J. Schmid, Finite groups and invariant theory, in ``Topics in invariant theory'' (M.-P. Malliavin, ed.), Lecture notes in mathematics, no. 1478,
  Springer, 1989-90, pp.~35-66.

\bibitem{wschmid} 
W. A. Schmid,  
The inverse problem associated to the Davenport constant for $C_2\oplus C_2\oplus C_{2n}$, and applications to the arithmetical characterization of class groups,  
Electron. J. Combin. 18 (2011), no. 1, Paper 33, 42 pp. 

\bibitem{sezer} M. Sezer, Sharpening the generalized Noether bound in the invariant theory of finite groups, J. Algebra 254 (2002), 252-263. 

\bibitem{shephard-todd} G. C. Shephard and J. A. Todd, 
Finite unitary reflection groups, 
Canad. J. Math. 6 (1954), 274-304. 

\bibitem{wehlau} D. L. Wehlau, 
Some problems in invariant theory, 
in "Invariant Theory in All Characteristics", (Ed.: H. E. A. Eddy Campbell and D. L. Wehlau), 
CRM Proceedings and Lecture Notes 35, Amer. Math. Soc., Providence, Rhode Island, pp. 265-274, 2004. 

\bibitem{weyl}
H. Weyl, 
The classical groups, Second edition, 
Princeton University Press, Princeton, 1946.

\end{thebibliography}
\end{document}